\newtheorem{thm}{Theorem}[section]
\newtheorem{prop}[thm]{Proposition}
\newtheorem{lem}[thm]{Lemma}
\newtheorem{cor}[thm]{Corollary}
\theoremstyle{definition}
\newtheorem{defn}[thm]{Definition}
\newtheorem{ex}[thm]{Example}
\theoremstyle{remark}
\newtheorem{rem}[thm]{Remark}
\newcommand{\Rr}{\mathbb R}
\newcommand{\Ss}{\mathbb S}
\newcommand{\Cc}{\mathbb C}
\renewcommand{\d}{\mathrm d}
\newcommand{\set}[1]{\left\{#1\right\}}
\newcommand{\eval}[1]{\left\langle#1\right\rangle}
\newcommand{\brr}[1]{\left[#1\right]}
\newcommand{\rmap}{\longrightarrow}
\newcommand{\X}{\ensuremath{\mathfrak{X}}}
\newcommand{\F}{\ensuremath{\mathcal{F}}}
\newcommand{\ds}{\displaystyle}
\renewcommand{\d}{\mathrm d}               
\newcommand{\Lie}{\boldsymbol{\pounds}}            
\newcommand{\smalcirc}{\mbox{\,\tiny{$\circ $}\,}}  
\DeclareMathOperator{\tr}{tr}           
\DeclareMathOperator{\red}{red}         
\DeclareMathOperator{\graf}{graph}	    
\DeclareMathOperator{\ch}{char}         
\DeclareMathOperator{\modular}{mod}     
\DeclareMathOperator{\VE}{VE}           
\renewcommand{\mod}{\modular}
\renewcommand{\top}{\text{\rm top}}
\newcommand{\diff}{\text{\rm diff}}
\newcommand{\proj}{\text{\rm proj}}
\newcommand{\al}{\alpha}
\newcommand{\be}{\beta}
\newcommand{\G}{\mathcal{G}}            
\newcommand{\s}{\mathbf{s}}             
\renewcommand{\t}{\mathbf{t}}           
\renewcommand{\H}{\mathcal{H}}          
\renewcommand{\gg}{\mathfrak{g}}        
\newcommand{\Lcal}{\mathcal{L}}
\newcommand{\tto}{\rightrightarrows}    
\DeclareMathOperator{\ad}{ad}           
\DeclareMathOperator{\Ad}{Ad}           
\DeclareMathOperator{\Rep}{Rep}         
\DeclareMathOperator{\Ver}{Vert}        
\begin{document}

\title{The modular class of a Poisson map}

\author{Raquel Caseiro}
\address{Departamento de Matem\'{a}tica\\
Faculdade de Ci\^{e}ncias e Tecnologia\\
Universidade de Coimbra\\
Apartado 3008\\
3001-454 Coimbra\\ Portugal}
\email{raquel@mat.uc.pt}
\author{Rui Loja Fernandes}
\address{Departamento de Matem\'{a}tica\\
Instituto Superior T\'{e}cnico\\1049-001 Lisboa\\ Portugal}
\email{rfern@math.ist.utl.pt}

\thanks{RC is partially supported by PTDC/MAT/099880/2008. RLF is partially supported by the FCT through the Program POCI 2010/FEDER and by project PTDC/MAT/098936/2008. Both authors are also supported by Portugal-France Pessoa agreement 441.00 FRAN\c{C}A-2010/2011.}


\begin{abstract}
We introduce the modular class of a Poisson map.
We look at several examples and we use the modular classes of
Poisson maps to study the behavior of the modular class of a Poisson manifold
under different kinds of reduction. We also discuss their
 symplectic groupoid version, which lives in groupoid cohomology.
\end{abstract}

\maketitle

\section*{Introduction}             %
\label{sec:introduction}           %

The modular class is arguably one of the most basic global invariants in Poisson geometry. For a Poisson manifold $M$,
its modular class $\mod(M)$ is an element of the first Poisson cohomology group $H^1_\pi(M)$, which measures
the obstruction to the existence of a measure in $M$ invariant under all hamiltonian diffeomorphisms (\cite{Koszul,Weinstein1}).
In this work we study how the modular class behaves under a Poisson morphism. This study is motivated by such basic
questions as what happens to the modular class under a symmetry reduction or under restriction to a submanifold.

The notion of modular class extends to Lie algebroids (see \cite{ELW}). Given a Poisson manifold $M$, its cotangent bundle $T^*M$ has a natural Lie algebroid structure and $\mod(M)$ agrees, up to a factor, with the modular class of $T^*M$. The modular class of a Lie algebroid morphism was introduced and studied in \cite{GMM,KW,KLW}). Note, however, that a Poisson map $\phi:M\to N$, in general, \emph{does not}
induce a morphism between the cotangent Lie algebroids $T^*M$ and $T^*N$, nor even a map at the level of Poisson cohomology. This
lack of functorial behavior of a Poisson map makes the definition of its modular class less obvious.

In order to define the modular class of a Poisson map we will use coisotropic calculus and this will bring us back into the realm of Lie algebroids. Recall that a map $\phi:M\to N$ between two Poisson manifolds is Poisson if and only if its graph is a
coisotropic submanifold of $M\times \overline{N}$ (\cite{Weinstein2}). This condition can also be expressed by saying that the conormal bundle
to the graph is a Lie subalgebroid of the product Lie algebroid  $T^*M\times T^*\overline{N}$. Equivalently, the pullback bundle $\phi^*T^*N$ carries a natural Lie algebroid structure and we denote its Lie algebroid cohomology by $H^\bullet_\pi(\phi)$. We will define the \emph{modular class} of $\phi$ to be a certain Lie algebroid cohomology class $\mod(\phi)\in H^1_\pi(\phi)$. This class $\mod(\phi)$ will have representatives certain vector fields along the Poisson map $\phi$ and we will see that it is the obstruction to the existence of measures $\mu$ in $M$ and $\nu$ in $N$, such that the associated modular vector fields are $\phi$-related.

This paper is organized as follows. After introducing and establishing basic properties of the modular class of a Poisson map in Section \ref{sec:Poisson:map}, we look at various applications of this notion. First, in Section \ref{sec:Poisson:submanfld}, we consider Poisson immersions and Poisson submanifolds. We introduce the \emph{relative modular class} $\mod_N(M)$ of a Poisson submanifold $N\subset M$ to be the  opposite  of the modular class of the inclusion. Then we establish the following relationship between the relative modular class  of $N$  and the linear Poisson holonomy $h_N(a):\nu(N)_x\to\nu(N)_y$ along a cotangent loop $a:I\to T^*_NM$:

\begin{thm}
For any cotangent loop $a:I\to T^*_N M$ on a Poisson submanifold $N\subset M$ one has:
\[
\det h_N(a)=\exp\left(\int_a \mod_N(M)\right).
\]
\end{thm}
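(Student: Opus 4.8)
The plan is to deduce the formula from a general integration principle for flat Lie algebroid representations on line bundles, reducing everything to identifying $\mod_N(M)$ with the infinitesimal modular cocycle of the normal (linear holonomy) representation.

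First I would set up the relevant Lie algebroid. Since $N\subset M$ is a Poisson submanifold, $\iota^*T^*M=T^*_NM$ is a Lie algebroid over $N$ whose anchor $\rho$ takes values in $TN$, and by definition $\mod_N(M)=-\mod(\iota)\in H^1_\pi(\iota)=H^1(T^*_NM)$. The linear Poisson holonomy is, by its very construction, the parallel transport of the flat $T^*_NM$-connection $\nabla$ on the normal bundle $\nu(N)$ underlying the normal representation. Thus, for a cotangent loop $a$ whose base path runs from $x$ to $y$, the holonomy $h_N(a):\nu(N)_x\to\nu(N)_y$ is the value at $t=1$ of the solution of the parallel-transport equation $\dot u(t)+\nabla_{a(t)}u(t)=0$. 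Taking top exterior powers, $\nabla$ induces a flat connection $\wedge^\top\nabla$ on the line bundle $\wedge^\top\nu(N)$ whose parallel transport along $a$ is precisely $\det h_N(a)$.

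The key step is to show that $\mod(\iota)$ is represented by the modular cocycle $\theta\in\Gamma((T^*_NM)^*)$ of $\wedge^\top\nabla$, determined by $\nabla_\alpha e=\langle\theta,\alpha\rangle e$ for a local frame $e$ of $\wedge^\top\nu(N)$; equivalently $\langle\theta,\alpha\rangle=\tr\nabla_\alpha$, the trace of the connection $\nabla$. Flatness of the normal representation guarantees that $\theta$ is a cocycle, so $[\theta]\in H^1(T^*_NM)$ is well defined. To establish the identification I would unwind the measure-theoretic definition of the modular class of a Poisson map from Section~\ref{sec:Poisson:map} for the inclusion $\iota$. Choosing volume forms on $N$ and $M$ together with a local splitting $\wedge^\top TM|_N\cong\wedge^\top TN\otimes\wedge^\top\nu(N)$, the defect measuring the failure of the modular vector fields of $M$ and $N$ to be $\iota$-related is governed exactly by the divergence of the hamiltonian flows in the normal directions, i.e. by the trace of the linearized normal flow. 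Matching representatives gives $\mod(\iota)=[\theta]$, hence $\mod_N(M)=-[\theta]$. I expect this to be the main obstacle, since it requires a careful comparison of the analytic definition of $\mod(\iota)$ with the geometric linear holonomy.

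Finally I would record the general exponential formula, which needs no more than Liouville's identity for linear ODEs. Trivializing $\wedge^\top\nu(N)$ along the base path by a nonvanishing section $e$, the parallel transport of $u=fe$ reduces to the scalar equation $\dot f(t)=-\langle\theta,a(t)\rangle f(t)$, so that $\det h_N(a)=\exp\left(-\int_0^1\langle\theta,a(t)\rangle\,\d t\right)$. Since $\mod_N(M)=-[\theta]$, the two signs combine and the right-hand side equals $\exp\left(\int_a\mod_N(M)\right)$, where $\int_a$ denotes integration of the $A$-$1$-form along the cotangent loop. This proves the theorem.
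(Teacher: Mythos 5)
Your strategy is sound and genuinely different from the one in the paper. The paper's proof reduces the statement to the Ginzburg--Golubev formula for symplectic leaves \cite{GG}: it inserts the symplectic leaf $S$ through the base loop, uses the multiplicativity $\det h_S^M(a)=\det h_S^N(\phi^*a)\cdot\det h_N^M(a)$ for the chain $S\subset N\subset M$, and then identifies $\int_a\mod(M)-\int_{\phi^*a}\mod(N)$ with $\int_a\mod_N(M)$. You instead combine the identification of $\mod_N(M)$ with the characteristic class of the linear holonomy representation on $\wedge^\top\nu(N)$ together with the elementary Liouville-type fact that parallel transport in a flat line bundle along an $A$-path is the exponential of the integral of the characteristic cocycle. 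This is self-contained (no appeal to \cite{GG} as a black box) and is the same mechanism the paper exploits later when relating modular cocycles to modular characters of groupoids, so it buys a cleaner conceptual picture; what it costs is that you must yourself verify that the linear Poisson holonomy of a cotangent path coincides with parallel transport of the Bott-type connection on $\nu(N)$, a comparison that the reduction to \cite{GG} outsources.

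Two points need attention. First, the step you single out as the main obstacle --- that $\mod(\iota)$ is, up to sign, the characteristic cocycle of the normal representation --- is precisely Theorem \ref{thm:relative:mod:class}, already proved in this section via the isomorphism $\wedge^\top\nu^*(N)\otimes\wedge^\top T^*N\cong\wedge^\top T^*_NM$; you can simply invoke it instead of re-deriving it from volume forms. Second, your signs do not cohere with that theorem: you assert $\mod(\iota)=[\theta]$, hence $\mod_N(M)=-[\theta]$, whereas Theorem \ref{thm:relative:mod:class} states $\mod_N(M)=+\ch\bigl(\wedge^\top\nu(N)\bigr)=+[\theta]$. Taking the theorem as stated, your transport computation $\det h_N(a)=\exp\left(-\int_0^1\langle\theta,a(t)\rangle\,\d t\right)$ would produce $\exp\left(-\int_a\mod_N(M)\right)$, off by a sign from the claim. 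As written, your two sign choices cancel to give the stated formula, but at least one of them contradicts the paper's conventions, so the identification of $\mod_N(M)$ with $\pm[\theta]$ and the orientation of the parallel-transport ODE must be pinned down consistently before the argument is complete.
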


This extends a result of Ginzburg and Golubev \cite{GG}, who considered the special case of a symplectic leaf.

In Section \ref{sec:reduction}, we turn to Poisson submersions and reduction. First, we show how one can compute the modular class of a Poisson submersion as a characteristic class of a certain representation and we apply this result to conclude that for a free and proper Poisson action $G\times M\to M$ the modular class of the quotient map $M\to M/G$ vanishes. This vanishing result allows to
understand the behavior of the modular class under reduction. In this
direction we prove two results. First, for any free and proper Poisson
action, we consider the Poisson cohomology of projectable multivector fields, denoted by $H^\bullet_{\proj,\pi}(M)$, for which there are maps into the ordinary Poisson cohomology of $M$ and $M/G$:
\[
\xymatrix{
    & H^\bullet_{\pi}(M)\\
H^\bullet_{\proj,\pi}(M)\ar[ur]\ar[dr]& \\
    & H^\bullet_{\pi}(M/G)}
\]
and we show that:

\begin{thm}
For a proper and free Poisson action there is a cohomology class $\mod(M,G)\in H^1_{\proj,\pi}(M)$, which in the diagram above maps to $\mod(M)\in H^1_\pi(M)$ and to $\mod(M/G)\in H^1_\pi(M/G)$.
\end{thm}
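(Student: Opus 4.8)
The plan is to exhibit $\mod(M,G)$ by a single vector field on $M$ that is at once a Poisson cocycle and projectable, and then to read off its two images directly.

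First I would set up the complex. Since the action is by Poisson diffeomorphisms, the bivector $\pi$ is projectable and $p_*\pi=\pi_{M/G}$, where $p\colon M\to M/G$ is the quotient submersion. By naturality of the Schouten bracket under $p$-relatedness, $\d_\pi=[\pi,\cdot]$ carries projectable multivector fields to projectable ones: if $X\sim_p\bar X$ then $[\pi,X]\sim_p[\pi_{M/G},\bar X]$. Hence $(\X_{\proj}^\bullet(M),\d_\pi)$ is a subcomplex of $(\X^\bullet(M),\d_\pi)$ computing $H^\bullet_{\proj,\pi}(M)$, the inclusion $\X^\bullet_{\proj}(M)\hookrightarrow\X^\bullet(M)$ induces the upper arrow, and $p_*\colon\X_{\proj}^\bullet(M)\to\X^\bullet(M/G)$ --- which is a chain map precisely because $p_*\pi=\pi_{M/G}$ --- induces the lower arrow.

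Next I would produce the representative. The key input is the vanishing $\mod(p)=0$ established earlier in this section. By the interpretation of the modular class of a Poisson map as the obstruction to making the modular vector fields $\phi$-related, this vanishing yields measures $\mu$ on $M$ and $\nu$ on $M/G$ whose modular vector fields satisfy $p_*X_\mu=X_\nu$. As a modular vector field, $X_\mu$ is Poisson, so $\d_\pi X_\mu=0$; being $p$-related to $X_\nu$ it is projectable. Thus $X_\mu$ is a cocycle of the projectable complex and I set $\mod(M,G):=[X_\mu]\in H^1_{\proj,\pi}(M)$. Tracing this class through the diagram, the inclusion sends it to $[X_\mu]=\mod(M)\in H^1_\pi(M)$, while $p_*$ sends it to $[p_*X_\mu]=[X_\nu]=\mod(M/G)\in H^1_\pi(M/G)$, which is exactly the asserted compatibility.

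The genuinely substantive ingredient is the earlier vanishing theorem; granting it, the statement is essentially formal, and the only point demanding care is independence of the choices made. Two admissible pairs alter $X_\mu$ by the Hamiltonian vector field $X_h$ of $h=\log(\mu'/\mu)$, and to conclude that $[X_\mu]$ is unchanged in $H^1_{\proj,\pi}(M)$ one needs $h$ to be $G$-invariant, so that $X_h=\d_\pi h$ is exact already in the projectable complex. I would arrange this by taking $\mu$ (and hence $\nu$) $G$-invariant from the outset --- such a density exists for a free and proper action --- in which case $X_\mu$ is automatically $G$-invariant, and therefore projectable because $p\circ g=p$ forces $\d p(X_\mu)$ to be constant along the orbits. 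The remaining verification, that the projection of such an $X_\mu$ is itself a modular vector field on $M/G$, is again exactly what $\mod(p)=0$ provides.
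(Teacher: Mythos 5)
The first half of your argument --- using the vanishing of the modular class of the quotient map to produce a pair $(\mu,\nu)$ with $\phi_*X_\mu=X_\nu$, observing that such an $X_\mu$ is a projectable Poisson vector field, and reading off the two images of $[X_\mu]$ in the diagram --- is exactly the paper's route. The gap is in your resolution of the well-definedness issue. You propose to fix the ambiguity by taking $\mu$ to be a $G$-invariant volume form, and you assert that the projection of the resulting $X_\mu$ being a modular vector field on $M/G$ is ``again exactly what $\mod(p)=0$ provides.'' It is not: that vanishing guarantees that \emph{some} volume form on $M$ has modular vector field projecting onto a modular vector field of $M/G$, but says nothing about the particular $G$-invariant $\mu$ you chose. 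This already fails for the quotient $T^*G\to T^*G/G\simeq\gg^*$ of Example \ref{ex:T*G} when $\gg$ is not unimodular: every $G$-invariant volume form on $T^*G$ differs from the Liouville form by a basic factor, so its modular vector field is of the form $X_{\bar h\circ\phi}$ and projects onto the Hamiltonian vector field $X_{\bar h}$ on $\gg^*$, which is \emph{not} a modular vector field of $\gg^*$ since $\mod(\gg^*)\ne 0$. Thus the class you end up defining maps to $\mod(M)$ but not to $\mod(M/G)$.

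The paper's proof avoids this by exhibiting the explicit, in general \emph{non}-invariant, volume form
$\mu=\frac{\mu_G\wedge\phi^*\nu}{\langle\mu_G,\xi^1_M\wedge\cdots\wedge\xi^d_M\rangle}$
of Theorem \ref{thm:mod:class:quotient}; the denominator, which is a nonconstant function precisely when $\gg$ fails to be unimodular, is what forces $X_{\mu,\nu}=0$. Well-definedness of $[X_\mu]\in H^1_{\proj,\pi}(M)$ is then checked against the actual choices made (rescaling $\mu_G$ leaves $\mu$ unchanged, changing $\nu$ alters $X_\mu$ by the Hamiltonian vector field of a basic function, hence by a coboundary of the projectable complex, and changing the basis rescales $\mu$ by a constant), rather than by appealing to $G$-invariance of the representative. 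Note also that your opening sentence restricts to actions by Poisson diffeomorphisms, whereas the theorem concerns arbitrary Poisson actions of Poisson--Lie groups; projectability of $\pi$ follows in either case simply from $\phi$ being a Poisson submersion, but the $T^*G$ example shows that the $G$-invariance strategy breaks down even in the Poisson-diffeomorphism setting.
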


We call $\mod(M,G)$ the \textbf{modular class of the Poisson action}. This approach allows us to understand the modular classes of many examples of Poisson quotients. If $\mod(M,G)=0$ the theorem guarantees that $\mod(M/G)=0$. For example, we show that for an action of a compact group by Poisson diffeomorphisms, $\mod(M)=0$ implies that $\mod(M,G)=0$ and hence that $\mod(M/G)=0$. As a consequence, only Poisson manifolds with zero modular class can be quotients of symplectic manifolds by compact group actions. On the other hand, we give examples where $G$ is unimodular, $\mod(M)=0$ but $\mod(M/G)\not=0$ (and, of course, the modular class of the action is non-zero).

We also study Hamiltonian actions admitting a moment map $k:M\to G^*$. In this case, we compute the class $\mod(k)$ in terms of the
adjoint character of the Lie algebra $\gg$ and this allows us to show that if $G$ is unimodular and $\mod(M)=0$ then $\mod(M/G)=0$. In order to understand the modular class of the hamiltonian quotient $M//G:=k^{-1}(e)/G$ we introduce the Poisson cohomology $H^\bullet_{\pi,\proj}(k)$ of multivector fields which are projectable and tangent to the fiber $k^{-1}(e)$, for which there are maps into the ordinary Poisson cohomology of $M$ and $M//G$:
\[
\xymatrix{
    & H^\bullet_{\pi}(M)\\
H^\bullet_{\pi,\proj}(k)\ar[ur]\ar[dr]& \\
    & H^\bullet_{\pi}(M//G)}
\]
We show that:

\begin{thm}
Let $G\times M\to M$ be a proper and free hamiltonian action. There is a class $\mod_G(M)\in H^1_{\pi,\proj}(k)$ which in the diagram above maps to the modular classes $\mod(M)\in H^1_{\pi}(M)$ and $\mod(M//G)\in H^1_{\pi}(M//G)$.
\end{thm}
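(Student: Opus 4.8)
The plan is to produce a canonical representative of $\mod_G(M)$ from an invariant measure and to exploit that the components of the moment map generate the action. Since the action is proper, I would first fix a $G$-invariant measure $\mu$ on $M$ and form its modular vector field $X_\mu$, defined by $X_\mu(f)=\operatorname{div}_\mu(X_f)$. The crucial observation is that $X_\mu$ is automatically tangent to the fibres of $k$: by the defining property of the moment map the hamiltonian vector fields of the components of $k$ are the infinitesimal generators $\xi^M$ of the action, and these have vanishing $\mu$-divergence because $\Lie_{\xi^M}\mu=0$; hence $X_\mu$ annihilates the components of $k$ and is tangent to $C:=k^{-1}(e)$. As $\mu$ and $\pi$ are $G$-invariant so is $X_\mu$, and a $G$-invariant vector field tangent to $C$ descends along the principal bundle $C\to C/G=M//G$. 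Thus $X_\mu$ is a cocycle of the complex computing $H^1_{\pi,\proj}(k)$, and I would set $\mod_G(M):=[X_\mu]$.

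The next step is well-definedness. Replacing $\mu$ by $e^{g}\mu$ with $g$ a $G$-invariant function changes $X_\mu$ by the hamiltonian vector field $X_g=\pm\,d_\pi g$; since $g$ is $G$-invariant one has $X_g(k_i)=\pm\,\xi_i^M(g)=0$, so $X_g$ is again tangent to $C$ and projectable, i.e.\ a coboundary inside the subcomplex. Hence $[X_\mu]$ is independent of the invariant measure. The map $H^1_{\pi,\proj}(k)\to H^1_\pi(M)$ is the one induced by inclusion, and because $X_\mu$ is literally a modular vector field of $M$, the class $\mod_G(M)$ maps to $\mod(M)$; this half is immediate.

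The substance is to show that $\mod_G(M)$ maps to $\mod(M//G)$ under $H^1_{\pi,\proj}(k)\to H^1_\pi(M//G)$, i.e.\ that the descent $\overline{X_\mu}$ of $X_\mu|_C$ is a modular vector field of the reduced manifold. I would build a reduced measure $\bar\mu$ on $M//G$ by factoring $\mu$ transversally, writing $\mu=\bar\mu\wedge\omega_{\mathrm{orb}}\wedge\omega_\perp$, where $\omega_{\mathrm{orb}}$ is the orbit volume dual to $\xi_1^M\wedge\cdots\wedge\xi_m^M$ and $\omega_\perp=\d k_1\wedge\cdots\wedge \d k_m$ is transverse to the fibres of $k$. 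A point worth isolating is that neither factor is $G$-invariant on $C$ — the two transform under $G$ by reciprocal powers of $\det\Ad$ (orbit versus coadjoint) — but their product, and hence $\bar\mu$, is $G$-invariant. Now for $\bar f\in C^\infty(M//G)$ with a $G$-invariant admissible lift $f$, the reduced hamiltonian vector field $X_{\bar f}$ is the descent of $X_f|_C$, and one checks $\Lie_{X_f}\omega_\perp=0$ (because $X_f(k_i)=\{f,k_i\}=\pm\xi_i^M(f)=0$) together with $\Lie_{X_f}(\xi_1^M\wedge\cdots\wedge\xi_m^M)=\sum_i \xi_1^M\wedge\cdots\wedge[X_f,\xi_i^M]\wedge\cdots=0$ (because $[X_f,X_{k_i}]=X_{\{f,k_i\}}=0$). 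Consequently $\operatorname{div}_\mu(X_f)$ restricts on $C$ to $\operatorname{div}_{\bar\mu}(X_{\bar f})$, which says precisely that $\overline{X_\mu}=X_{\bar\mu}$ and hence that $\mod_G(M)\mapsto\mod(M//G)$.

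The two places where I expect the real work are the following. First, one must check that $\X^\bullet_{\pi,\proj}(k)$ is genuinely a subcomplex, i.e.\ that $d_\pi=[\pi,-]$ preserves both tangency to $C$ and projectability; this is not formal, since $\pi$ itself is not tangent to $C$, and the verification rests on the coisotropy of $C$ and on the admissibility built into the projectability condition. Second, the transverse factorisation of $\mu$ and the divergence comparison above must be made coordinate-free and independent of the auxiliary choices (the basis of $\gg$ and the splitting); the cancellation of the adjoint characters noted in the third paragraph — the shadow, in this hamiltonian setting, of the computation of $\mod(k)$ in terms of the adjoint character of $\gg$ — is exactly what makes $\bar\mu$, and the whole construction, intrinsic. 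I regard this bookkeeping, rather than any single hard estimate, as the main obstacle.
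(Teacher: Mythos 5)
Your argument is carried out entirely in the classical setting of an action by Poisson diffeomorphisms with a $\gg^*$-valued equivariant moment map, but the theorem is stated (and used) in the paper for hamiltonian actions in the sense of Lu: $G$ is a Poisson-Lie group, the moment map $\kappa$ takes values in the dual Poisson-Lie group $G^*$, and the moment condition is $\xi_M=\pi^\sharp(\kappa^*\xi^L)$ with $\xi^L$ a left-invariant, in general non-closed, $1$-form on $G^*$. In that generality several of your key steps break down. First, there are no ``components of $k$'' with $X_{k_i}=\xi_i^M$, so the inference $\Lie_{\xi_M}\mu=0\Rightarrow X_\mu(k_i)=0$ is not available; the correct computation (the paper's equations \eqref{eq:fundvf:divfree}--\eqref{eq:mod:ham:action}) gives $\d\kappa\cdot X_\mu=(-X_{\nu^L}+\vartheta_0^L)\smalcirc\kappa$ for a $G$-invariant $\mu$, which vanishes on $\kappa^{-1}(e)$ only because $X_{\nu^L}(e)=\vartheta_0$ by the Evens--Lu--Weinstein formula \eqref{eq:mod:class:Poisso:Lie:grp} --- so your conclusion about tangency happens to survive, but for a reason your argument does not supply. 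Second, ``as $\mu$ and $\pi$ are $G$-invariant so is $X_\mu$'' is false here: $\pi$ is not preserved by a Poisson-Lie group action, and by Remark \ref{rem:G:invariance} one has $\Lie_{\xi_M}X_\mu\neq0$ in general; what saves the day is that this Lie derivative is vertical, so $X_\mu$ is projectable without being invariant. Third, the identity $[X_f,\xi_i^M]=X_{\{f,k_i\}}=0$ used in your descent computation fails for the same reason, and your transverse factor $\omega_\perp=\d k_1\wedge\cdots\wedge\d k_m$ must be replaced by $\kappa^*\xi_1^L\wedge\cdots\wedge\kappa^*\xi_d^L$, which is not closed and not basic; this is exactly why the paper's representative is built from the volume form \eqref{eq:vol:form:ham} rather than from a $G$-invariant measure, and why Theorem \ref{thm:mod:class:ham:quotient} (tangency of $X_{\bar\mu}$ to $\kappa^{-1}(e)$ and $\phi$-relatedness with $X_\tau$) is the actual content of the proof.

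That said, when $\pi_G=0$ (so $G^*=\gg^*$ is abelian and the action is hamiltonian in the classical sense) your route is correct and rather transparent: the cancellation of the two $\det\Ad$ factors in $\omega_{\mathrm{orb}}\wedge\omega_\perp$ is right, the well-definedness argument via $G$-invariant conformal factors is right, and the divergence comparison on $C$ does identify $\overline{X_\mu}$ with $X_{\bar\mu}$. To promote this to a proof of the stated theorem you would need to redo each step with the left-invariant forms $\xi^L$ on the nonabelian $G^*$ in place of the $\d k_i$, at which point you essentially reconstruct the paper's Theorem \ref{thm:mod:class:ham:quotient} and Proposition \ref{prop:mod:class:moment:map}, with the adjoint character $\vartheta_0$ accounting for the failure of your naive divergence identities.
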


We call the class  $\mod_G(M)$ the \textbf{equivariant modular class} of the hamiltonian action. Again, this approach allows us to compute the modular class of the hamiltonian quotient $M//G$ in many examples. For example, we show that for an action of a compact group by Poisson diffeomorphisms, $\mod(M)=0$ implies that  $\mod_G(M)=0$ and hence that $\mod(M//G)=0$.

In the final section of the paper we discuss how the previous results at the level of the symplectic groupoids integrating Poisson manifolds, relating the modular class of Poisson manifolds and the modular characters of their integrating symplectic groupoids.

\vskip 5pt
\noindent
\textbf{Acknowledgments.}
The authors would like to thank the Centre de Recerca Matem\`{a}tica, where part of this work was carried out, for its hospitality and support. We also would like to thank Yvette Kosmann-Schwarzbach for many comments on a preliminary version of this manuscript. Finally, the anonymous referee made many comments and suggestions that helped improved greatly this manuscript.

\tableofcontents

\section{Poisson maps and the modular class}%
\label{sec:Poisson:map}                     %

In this section, we  first review  the notion of modular class for Poisson manifolds,
Lie algebroids and morphisms of Lie algebroids. Then we introduce the modular class of a Poisson map
and we state its basic properties.

\subsection{Modular class of a Poisson manifold}               %
\label{subsec:mod:Poisson:manfld}                              %

Let $(M,\{\cdot,\cdot\})$ be a Poisson manifold. We will denote by
$\pi\in\X^2(M)$ the associated Poisson tensor which is given by
\[ \pi(\d f,\d  g):=\{f,g\}, \quad (f,g \in  C^\infty(M))\]
and by $\pi^\sharp:T^\ast M \rightarrow TM$ the vector bundle map defined by
\[
\pi^\sharp (\d h)=X_h:=\{h,\cdot\},
\]
where $X_h$ is the hamiltonian vector field determined by $h\in
C^\infty(M)$. Recall also that the Poisson-Lichnerowicz cohomology
\cite{Lichnerowicz} of $(M,\pi)$ is the cohomology of the complex of multivector fields
$(\X^\bullet(M),\d_\pi)$, where the coboundary operator is defined
by taking the Schouten bracket with the Poisson tensor:
\[ \d_{\pi}A\equiv [\pi, A].\]
This cohomology is denoted  by $H^\bullet_\pi(M)$. We will be
mainly interested in the first Poisson cohomology space
$H^1_\pi(M)$, which is just the space of Poisson vector fields
modulo the hamiltonian vector fields. Note that our conventions
are such that the hamiltonian vector field associated with the
function $h$ is given by:
\begin{equation}
\label{eq:hamiltons} X_h=-[\pi,h]=-\d_\pi h.
\end{equation}

If we assume that $M$ is oriented, then we can ask the following question:
\begin{quote}
\emph{Is there a volume form $\mu\in\Omega^{\top}(M)$ which is invariant under all hamiltonian flows?}
\end{quote}
To answer  this question one chooses \emph{any} volume form $\mu$ and computes its Lie derivative along hamiltonian vector fields.
This leads to a unique vector field $X_\mu\in\X(M)$ such that:
\[ \Lie_{X_f}\mu=X_{\mu}(f)\mu.\]
One calls $X_{\mu}$ the \textbf{modular vector field} of the Poisson manifold $(M,\pi)$ relative to $\mu$.
It is easy to check that:
\begin{enumerate}[(a)]
\item The vector field $X_\mu$ is Poisson: $\Lie_{X_\mu}\pi=0$;
\item If $\nu=g\mu$ is another volume form, then:
\begin{equation}
\label{eq:change:form} X_{g\mu}=X_\mu-X_{\ln|g|}.
\end{equation}
\end{enumerate}
This leads to the following definition, which is due to Alan Weinstein \cite{Weinstein1}:
\begin{defn}
The \textbf{modular class} of a Poisson manifold $(M,\pi)$ is the Poisson cohomology class
\[ \mod(M):=[X_\mu]\in H^1_\pi(M).\]
\end{defn}

Note that we can find a volume form $\mu$ with $X_\mu=0$ if and only if $\mod(M)=0$.
Since $X_\mu=0$ if and only if $\mu$ is invariant under all hamiltonian flows, the
modular class is the obstruction to the
existence of a volume form in $(M,\pi)$ invariant under all hamiltonian flows.

\begin{rem}
We have assumed that $M$ is orientable. If this is not the case, one can still define
modular vector fields and modular  classes  if one replaces volume forms by half densities. In this paper, we will always assume the manifolds to be orientable, but our results remain valid in the non-orientable case.
\end{rem}

\begin{ex}
\label{ex:Poisson-Lie}
Let $(G,\pi_G)$ be a Poisson-Lie group with Lie algebra $\gg$. The adjoint characters of $\gg^*$ and $\gg$ denoted, respectively, by $\chi_0\in\gg$ and by $\vartheta_0\in\gg^*$, are defined by:
\[ \langle \eta,\chi_0\rangle=\tr(\ad_{\gg^*}\eta),\quad \langle \vartheta_0,\xi\rangle=\tr(\ad_{\gg}\xi),\quad (\xi\in\gg,\eta\in\gg^*).\]

Evens, Lu and Weinstein (\cite{ELW}) proved that the modular vector field of $\pi_G$ relative to any left invariant volume form $\nu^L$ is given by:
\begin{equation}\label{eq:mod:class:Poisso:Lie:grp}
X_{\nu^L}=\frac{1}{2}\left(\chi_0^L+\chi_0^R+\pi_G(\vartheta_0^L)\right).
\end{equation}
The $1$-form $\vartheta_0^L$ is exact: $\vartheta_0^L=\d\ln f_0$, where $f_0:=\det \Ad_G:\wedge^{\top}\gg \to \wedge^{\top}\gg$. It follows that the modular class of $G$ is
\[\mod G=\left[\frac{1}{2}(\chi_0^L+\chi_0^R)\right]\in H^1_{\pi_G}(G)\]
and hence $\mod G$=0 if and only if $\chi_0=0$, i.e. $\gg^*$ is an unimodular Lie algebra.
\end{ex}

Our next question, which is the main subject of this paper is the following:
\begin{quote}
\emph{Let $\phi:M\to N$ be a Poisson map. Is there a relationship between $\mod(M)$ and $\mod(N)$?}
\end{quote}
Note that a Poisson map, in general, does not induce a map in Poisson cohomology.

\begin{ex}
\label{ex:basic}
Consider $M=\Rr^4$ with the canonical symplectic structure $\omega$, so the associated Poisson structure is:
\[ \pi_M=\frac{\partial}{\partial x}\wedge\frac{\partial}{\partial y}+\frac{\partial}{\partial z}\wedge\frac{\partial}{\partial w}.\]
The volume form $\mu=\omega^2$ is invariant (the Liouville theorem), so $\mod(M)=0$.

On the other hand, consider $N=\Rr^2$ with the linear Poisson structure:
\[ \pi_N=a\frac{\partial}{\partial a}\wedge\frac{\partial}{\partial b}.\]
The modular vector field relative to $\nu=\d a\wedge \d b$ is $X_\nu=\frac{\partial}{\partial b}$.
Since the axis $a=0$ consists of zeros of $\pi_N$, we see that $X_\nu$ cannot be hamiltonian and $\mod(N)\not=0$.

Finally, we observe that there is a Poisson map $\phi:\Rr^4\to\Rr^2$ given by:
\[ \phi(x,y,z,w)=(y,zw-xy). \]
This map is a surjective submersion when restricted to the open subset of $\Rr^4$ obtained by excluding the $x$-axis.
\end{ex}

We will introduce later the \emph{modular class of a Poisson map} which, in a sense to be made precise, measures the failure in preserving the modular classes. For that, we need to turn first to Lie algebroids.

\subsection{Modular class  of a Lie algebroid}                 %
\label{subsec:mod:Lie:algbrd}                                  %

A Lie algebroid also has a modular class which lives in Lie algebroid cohomology, as
was first explained in \cite{Weinstein1} and then studied in detail in \cite{ELW}.

Let $A\to M$ be a Lie algebroid over $M$, with anchor $\rho:A\to TM$
and Lie bracket $[\cdot,\cdot]:\Gamma(A)\times\Gamma(A)\to\Gamma(A)$. We will denote by
$\Omega^k(A)\equiv\Gamma(\wedge^k A^*)$ the $A$-forms and by $\X^k(A)\equiv\Gamma(\wedge^k A)$
the $A$-multivector fields. Recall that we have a Cartan calculus associated with $A$, where one defines the
$A$-differential $\d_A:\Omega^k(A)\to \Omega^{k+1}(A)$, the Lie $A$-derivative operator $\Lie_X$ and
the contraction operator $i_X$, where $X\in\X(A)$ is an ``A-vector field''. Moreover,
we also have Cartan's magic formula (see, e.g., \cite{ELW}):
\[ \Lie_X=i_X\d_A+\d_A i_X.\]
The cohomology of the complex $(\Omega^\bullet(A),\d_A)$ is the Lie algebroid cohomology which will be
denoted by $H^\bullet(A)$. Also, the space of $A$-multivector fields carries a graded Lie algebra bracket $[~,~]_A$, the $A$-Schouten bracket, which extends the Lie bracket on $\Gamma(A)=\X(A)$. For $X\in\X(A)$ and $P\in\X^k(A)$, we also write the bracket $[X,P]_A$ as $\Lie_X P$.

A representation of $A$ is a vector bundle $E\to M$ together with a flat $A$-connection $\nabla$ (see, e.g.,
\cite{CF0,Fernandes1}). The usual operations $\oplus$ and $\otimes$ on vector bundles turn the space of
representations $\Rep(A)$ into a semiring. Given a morphism of Lie algebroids:
\[
\xymatrix{ A\ar[r]^{\Phi}\ar[d]& B\ar[d]\\
M\ar[r]_{\phi}& N}
\]
there is a pullback operation on representations $E\mapsto \phi^{*}E$, which gives a morphism of
rings $\phi^{*}:\Rep(B)\to\Rep(A)$ (often, we will denote by the same letter the bundle and the
representation).

Representations, as usual, have characteristic classes (see, e.g., \cite{Crainic}). Here we are
interested in line bundles $L\in\Rep(A)$ for which the only characteristic class can be obtained as follows:
Assume first that $L$ is orientable, so that it carries a nowhere vanishing section $\mu\in\Gamma(L)$. Then:
\[ \nabla_X \mu=\langle \al_\mu,X\rangle\mu,\quad \forall X\in\X(A).\]
The 1-form $\al_\mu\in\Omega^1(A)$ is $\d_A$-closed and is called the \textbf{characteristic cocycle} of the
representation $L$. Its cohomology class is independent of the choice of section $\mu$ and
defines the characteristic class of the representation $L$:
\[ \ch(L):=[\al_\mu]\in H^1(A).\]
One checks easily that if $L,L_1,L_2\in\Rep(A)$, then:
\[ \ch(L^*)=-\ch(L),\qquad \ch(L_1\otimes L_2)=\ch(L_1)+\ch(L_2).\]
Also, if $(\Phi,\pi):A\to B$ is a morphism of Lie algebroids, and $L\in\Rep(B)$ then:
\[ \ch(\phi^{*}L)=\Phi^*\ch(L),\]
where $\Phi^*:H^\bullet(B)\to H^\bullet(A)$ is the map induced by $\Phi$ at the level of cohomology.
If $L$ is not orientable, then one defines its characteristic class to be  one half that
of the representation $L\otimes L$, so the formulas above still hold, for non-orientable line bundles.

Every Lie algebroid $A\to M$ has a canonical representation on the
line bundle $L^A=\wedge^{\top}A\otimes\wedge^{\top}T^*M$:
\[ \nabla_X (\omega\otimes\mu)=\Lie_X\omega\otimes\mu+\omega\otimes\Lie_{\rho(X)}\mu. \]

Then we set:
\begin{defn}
The \textbf{modular cocycle} of a Lie algebroid $A$ relative to a nowhere vanishing section
$\omega\otimes\mu\in\Gamma(\wedge^{\top}A\otimes\wedge^{\top}T^*M)$ is the characteristic cocycle
$\al_{\omega\otimes\mu}$ of the representation $L$. The \textbf{modular class} of $A$ is the characteristic class:
\[ \mod(A):=[\al_{\omega\otimes\mu}]\in H^1(A).\]
\end{defn}

\begin{ex}
\label{ex:mod:Poisson}
For any Poisson manifold $(M,\pi)$ there is a natural Lie algebroid
structure on its cotangent bundle $T^*M$: the anchor is
$\rho=\pi^\sharp$ and the Lie bracket on sections of $A=T^*M$, i.e.,
on 1-forms, is given by:
\[ [\al,\be]=\Lie_{\pi^\sharp\al}\be-\Lie_{\pi^\sharp\be}\al-\d\pi(\al,\be).\]
The Poisson cohomology of $(M,\pi)$ is just the Lie algebroid cohomology
of $T^*M$, and the two definitions above of the modular class differ
by a multiplicative factor:
\[ \mod(T^*M)=2\mod(M).\]
Because of this relation, a factor 2 appears in some of our formulas.
\end{ex}

If $\Phi:A\to B$ is a morphism of Lie algebroids covering a map $\phi:M\to N$,
the induced morphism at the level of cohomology $\Phi^*:H^\bullet(B)\to H^\bullet(A)$,
in general, does not map the modular class  of $B$ to that of $A$. Therefore one sets (\cite{KLW}):

\begin{defn}
The \textbf{modular class} of a Lie algebroid morphism $\Phi:A\to B$ is the cohomology
class defined by:
\[ \mod(\Phi):=\mod(A)-\Phi^*\mod(B)\in H^1(A).\]
\end{defn}

Clearly, if $\Phi:A\to B$ and $\Psi:B\to C$ are Lie algebroid morphisms, then:
\[ \mod(\Psi\circ\Phi)=\mod(\Phi)+\Phi^*\mod(\Psi).\]

The modular class of a morphism $(\Phi,\phi):A\to B$ can be seen as the modular class of a representation.
Namely, one takes the canonical representations $L^A\in\Rep(A)$ and $L^B\in\Rep(B)$
and forms the representation
$L^\Phi:=L^A\otimes\phi^{*}(L^B)^*$. Then:

\begin{prop}[\cite{KLW}]
\label{prop:mod:morphism:rep}
Let $(\Phi,\phi):A\to B$ be a Lie algebroid morphism. Then:
\[ \mod(\Phi)=\ch(L^\Phi).\]
\end{prop}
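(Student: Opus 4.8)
The plan is to reduce the statement to the three formal properties of the characteristic class $\ch$ recorded just above, together with the defining formula $\mod(\Phi)=\mod(A)-\Phi^*\mod(B)$. Before any computation, I would first check that $L^\Phi=L^A\otimes\phi^*(L^B)^*$ is genuinely a representation of $A$, so that $\ch(L^\Phi)$ makes sense as an element of $H^1(A)$: the canonical bundle $L^A$ carries its canonical flat $A$-connection, the pullback $\phi^*(L^B)^*$ lies in $\Rep(A)$ because $\phi^*$ maps $\Rep(B)$ into $\Rep(A)$ via the morphism $\Phi$, and the tensor product of two $A$-representations is again an $A$-representation.

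Next I would unwind the definitions. By the very definition of the modular class of a Lie algebroid, the modular cocycle is the characteristic cocycle of the canonical representation, so $\mod(A)=\ch(L^A)$ and $\mod(B)=\ch(L^B)$. Substituting into the definition of $\mod(\Phi)$ gives
\[
\mod(\Phi)=\ch(L^A)-\Phi^*\ch(L^B).
\]
Now I apply the three properties of $\ch$ in turn. Naturality under pullback yields $\Phi^*\ch(L^B)=\ch(\phi^*L^B)$; the dual rule turns the sign into a dual, $-\ch(\phi^*L^B)=\ch\bigl((\phi^*L^B)^*\bigr)$; and additivity under tensor products gives
\[
\ch(L^A)+\ch\bigl((\phi^*L^B)^*\bigr)=\ch\!\left(L^A\otimes(\phi^*L^B)^*\right).
\]
Chaining these equalities, $\mod(\Phi)=\ch\bigl(L^A\otimes(\phi^*L^B)^*\bigr)$, which is exactly $\ch(L^\Phi)$ once $(\phi^*L^B)^*$ is identified with $\phi^*(L^B)^*$.

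The only point deserving genuine care—and the one I would flag as the main (admittedly modest) obstacle—is this last identification: that pulling back a line bundle representation commutes with taking its dual, i.e. $(\phi^*L^B)^*\cong\phi^*\bigl((L^B)^*\bigr)$ as representations of $A$, and not merely as vector bundles. This holds because the flat $A$-connection on a pullback is built so that $\phi^*$ is a functor compatible with the dual pairing, but it is the one step I would verify at the level of connections rather than underlying bundles. I would also remark that all three properties of $\ch$ invoked above were stated to persist for non-orientable line bundles, via the convention $\ch(L)=\tfrac12\ch(L\otimes L)$, so no separate orientability assumption on $L^A$ or $L^B$ enters; once the dual-pullback compatibility is in hand, the argument is purely formal.
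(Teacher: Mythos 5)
Your proof is correct: the paper itself offers no argument for this proposition (it defers entirely to \cite{KLW}), and your derivation is the standard formal one, obtained by substituting $\mod(A)=\ch(L^A)$ and $\mod(B)=\ch(L^B)$ into the definition of $\mod(\Phi)$ and applying the three stated properties of $\ch$ (naturality under pullback, sign change under duals, additivity under tensor product). The one point you flag — that $(\phi^{*}L^B)^*\cong\phi^{*}\bigl((L^B)^*\bigr)$ as representations of $A$, not merely as bundles — is indeed the only step requiring a check at the level of flat $A$-connections, and it holds by the construction of the pullback connection.
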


We refer to \cite{KLW} for the proof and for other properties of the modular classes of Lie algebroids and their morphisms.

\subsection{Modular class of a Poisson map: definition}        %
\label{subsec:mod:Poisson:map}                                 %

Let $\phi:M\to N$ be a Poisson map. Note that, in general, $\phi$ \emph{does
not} induce a morphism neither of the associated cotangent Lie algebroids,
nor in Poisson cohomology. In order to understand what happens in cohomology,
we need(\footnote{A submanifold $C\subset M$ of a Poisson manifold is called \emph{coisotropic} if
$\pi^\sharp(\nu^*(C))\subset TC$, where $\nu^*(C)=(TC)^0$ denotes the conormal
bundle of $C$ in $M$.}):

\begin{prop}[Weinstein \cite{Weinstein2}]
Let $\phi:M\to N$ be a map between two Poisson manifolds. The following statements are equivalent:
\begin{enumerate}[(i)]
\item $\phi$ is a Poisson map;
\item $\graf(\phi)\subset M\times \overline{N}$ is a coisotropic submanifold;
\item The conormal bundle $\nu^*(\graf(\phi))$ is a Lie subalgebroid of $T^*M\times T^*\overline{N}$.
\end{enumerate}
\end{prop}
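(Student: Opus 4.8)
The plan is to reduce everything to two facts: a pointwise computation identifying the Poisson condition with coisotropy of the graph, and the general principle that the conormal bundle of a coisotropic submanifold is a Lie subalgebroid. Throughout I would work on the product Poisson manifold $M\times\overline{N}$, whose Poisson tensor is $\Pi=\pi_M\oplus(-\pi_N)$ and whose cotangent Lie algebroid is exactly the product $T^*M\times T^*\overline{N}$ appearing in (iii); its anchor is $\Pi^\sharp(\xi,\eta)=(\pi_M^\sharp\xi,-\pi_N^\sharp\eta)$.

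For the equivalence (i)$\Leftrightarrow$(ii) I would first describe the conormal bundle of the graph explicitly. At a point $(m,\phi(m))$ one has $T\graf(\phi)=\set{(v,\d\phi_m(v)):v\in T_mM}$, so a covector $(\xi,\eta)$ is conormal precisely when $\xi=-(\d\phi_m)^*\eta$; thus
\[
\nu^*(\graf(\phi))_{(m,\phi(m))}=\set{(-(\d\phi_m)^*\eta,\eta):\eta\in T^*_{\phi(m)}N}.
\]
Applying $\Pi^\sharp$ and demanding that the result lie in $T\graf(\phi)$ yields, after cancelling signs, the single condition $\d\phi_m\circ\pi_M^\sharp\circ(\d\phi_m)^*=\pi_N^\sharp$ for all $\eta$. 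This is exactly the statement that $\pi_M$ and $\pi_N$ are $\phi$-related, i.e.\ that $\phi$ is a Poisson map, so (i) and (ii) are equivalent by this piece of pointwise linear algebra.

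For (ii)$\Leftrightarrow$(iii) I would invoke (and prove) the general fact that, for any submanifold $C$ of a Poisson manifold, $\nu^*(C)$ is a Lie subalgebroid of the cotangent algebroid if and only if $C$ is coisotropic. The implication (iii)$\Rightarrow$(ii) is immediate: being a subalgebroid forces the anchor to map $\nu^*(C)$ into $TC$, which is the coisotropy condition $\pi^\sharp(\nu^*(C))\subset TC$. For the converse I would use that $\Gamma(\nu^*(C))$ is locally generated over $C^\infty$ by differentials $\d f$ of functions $f$ in the vanishing ideal of $C$, together with the identity $[\d f,\d g]=\d\set{f,g}$ for the cotangent bracket (a one-line check from the bracket formula, since the three terms combine to $\d\set{f,g}$). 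Coisotropy says $X_f=\pi^\sharp(\d f)$ is tangent to $C$ whenever $f$ vanishes on $C$; hence for two such $f,g$ one gets $\set{f,g}|_C=(X_f g)|_C=0$, so $\set{f,g}$ again vanishes on $C$ and $[\d f,\d g]=\d\set{f,g}$ restricts to a conormal covector along $C$.

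The main technical point, and where I would be most careful, is passing from these generating sections to arbitrary sections of $\nu^*(C)$: one writes a general section as $\sum h_i\,\d f_i$ and uses the Leibniz rule $[\alpha,h\beta]=h[\alpha,\beta]+(\rho(\alpha)h)\beta$ to expand the bracket. Along $C$ the anchor terms $\rho(\d f_i)=X_{f_i}$ are tangent to $C$ and the $\d f_i$ themselves restrict to conormal covectors, so every term of the expansion restricts to a section of $\nu^*(C)$; closure of the bracket along $C$ then follows, and the extension of the $h_i,f_i$ off $C$ is irrelevant. Since this argument applies verbatim to the coisotropic submanifold $\graf(\phi)\subset M\times\overline{N}$, the chain (i)$\Leftrightarrow$(ii)$\Leftrightarrow$(iii) is complete.
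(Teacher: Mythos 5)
Your proof is correct. Note that the paper itself gives no proof of this proposition: it is quoted as a known result of Weinstein, with only the explicit parametrization $\nu^*(\graf(\phi))=\set{((\d_m\phi)^*\al,-\al)}$ appearing later (in the proof of the proposition on $\phi^*T^*N$), which matches your pointwise description of the conormal bundle up to the substitution $\eta=-\al$. Your argument is the standard one: the linear-algebra computation showing that $\Pi^\sharp$ maps the conormal of the graph into the tangent space of the graph exactly when $\d\phi\circ\pi_M^\sharp\circ(\d\phi)^*=\pi_N^\sharp$, plus the general equivalence between coisotropy of $C$ and $\nu^*(C)$ being a Lie subalgebroid via the identity $[\d f,\d g]=\d\set{f,g}$ on the vanishing ideal. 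You correctly isolate the one genuinely delicate point, namely that closure of the bracket must be checked on arbitrary sections and shown to be independent of the chosen extensions off $C$; your Leibniz-rule expansion, together with the observation that the anchor of a conormal section is tangent to $C$ (so that derivatives of functions vanishing on $C$ again vanish along $C$), handles this adequately.
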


Here we are using the following standard convention: if $(N,\pi_N)$ is a Poisson manifold, then
$\overline{N}$ denotes the opposite Poisson manifold $(N,-\pi_N)$. So if $(M,\pi_M)$
is also a Poisson manifold, then $M\times\overline{N}$ denotes the cartesian product of $M$
and $N$ with the Poisson tensor $\pi_M\times(-\pi_N)$.

The following proposition provides a more convenient description of the Lie algebroid $\nu^*(\graf(\phi))$:

\begin{prop}
\label{prop:pull:back:algbrd}
Let $\phi:M\to N$ be a Poisson map. Then $A:=\phi^{*}T^*N$ carries a natural Lie algebroid
structure which is characterized by:
\begin{align}
\label{eq:bracket:pull:back:algbrd}
[\phi^{*}\al,\phi^{*}\be]_A&=\phi^{*}[\al,\be]_{\pi_N},\\
\label{eq:anchor:pull:back:algbrd}
\rho_A(\phi^{*}\al)&=\pi_M^\sharp(\phi^*\al),
\end{align}
for $\al,\be\in\Omega^1(N)$. For this structure, the natural maps
\begin{equation}
\label{diag:mod:class:poisson}
\xymatrix{
\phi^{*}T^*N\ar[r]^i\ar[dr]_{j}& T^*M\ar@{-->}[d]^{\Phi}\\
&T^*N}
\end{equation}
are Lie algebroid morphisms and we have $\phi^{*}T^*N\simeq \nu^*(\graf(\phi))$.
\end{prop}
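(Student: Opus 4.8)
The plan is to realize the Lie algebroid structure on $A=\phi^*T^*N$ by transporting the structure of the conormal Lie subalgebroid supplied by the previous proposition, and then to verify the two characterizing identities by a single computation carried out in the product cotangent algebroid $T^*M\times T^*\overline N$, in which the Poisson condition on $\phi$ enters decisively.

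First I would fix the vector bundle isomorphism. The tangent space to $\graf(\phi)$ at $(m,\phi(m))$ is $\set{(v,\d\phi_m v):v\in T_mM}$, so its conormal space is $\set{(\xi,\eta):\xi=\d\phi_m^*\eta}$. I therefore set $\Psi\colon\phi^*T^*N\to\nu^*(\graf(\phi))$, $\Psi(m,\eta)=(\d\phi_m^*\eta,-\eta)$, a fibrewise isomorphism covering the diffeomorphism $\graf(\phi)\cong M$. Under $\Psi$ the two projections of the product, $\pr_1\colon T^*M\times T^*\overline N\to T^*M$ and $\sigma\circ\pr_2$, where $\sigma\colon T^*\overline N\to T^*N$ is the Lie algebroid isomorphism $\eta\mapsto-\eta$ (recall the bracket and anchor of $-\pi_N$ are the negatives of those of $\pi_N$), correspond respectively to the pullback map $i(m,\eta)=\d\phi_m^*\eta$ (so $i(\phi^*\al)=\phi^*\al$) and to the canonical map $j(m,\eta)=\eta$ covering $\phi$. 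Since by the previous proposition $\nu^*(\graf(\phi))$ is a Lie subalgebroid of the product cotangent algebroid, transporting its bracket and anchor along $\Psi$ endows $A$ with a Lie algebroid structure and yields $A\simeq\nu^*(\graf(\phi))$. The morphism statements then come essentially for free: $\pr_1$ and $\pr_2$ are Lie algebroid morphisms of the product, their restrictions to the subalgebroid are morphisms, and composing with $\Psi$ and $\sigma$ shows that $i$ (over the identity of $M$) and $j$ (over $\phi$) are genuine Lie algebroid morphisms; the dashed $\Phi$ is the comorphism, defined on $\im i=\im\d\phi^*$, with $j=\Phi\circ i$. The anchor identity is immediate, since $i$ covers the identity and hence $\rho_A=\rho_{T^*M}\circ i=\pi_M^\sharp\circ i$, i.e. $\rho_A(\phi^*\al)=\pi_M^\sharp(\phi^*\al)$.

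The heart of the proof is the bracket identity. For $\al\in\Omega^1(N)$ I introduce the global $1$-form on $M\times\overline N$
\[ \Theta_\al:=\pr_M^*(\phi^*\al)-\pr_N^*\al, \]
and observe that $\Theta_\al|_{\graf(\phi)}=\Psi(\phi^*\al)$, so $\Theta_\al$ extends the conormal section of $\phi^*\al$. Because $\phi$ is Poisson, $X_\al:=\pi_M^\sharp(\phi^*\al)$ is $\phi$-related to $Y_\al:=\pi_N^\sharp\al$, so for $\pi:=\pi_M\oplus(-\pi_N)$ one has $\pi^\sharp\Theta_\al=(X_\al,Y_\al)$, a field tangent to $\graf(\phi)$. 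Since the Lie derivative commutes with pullback along related fields, one finds $\Lie_{\pi^\sharp\Theta_\al}\Theta_\be=\Theta_{\Lie_{Y_\al}\be}$, and the Poisson condition also gives $\pi(\Theta_\al,\Theta_\be)=\pr_M^*\phi^*h-\pr_N^*h$ with $h=\pi_N(\al,\be)$, whence $\d(\pi(\Theta_\al,\Theta_\be))=\Theta_{\d h}$. Feeding these into the bracket formula of Example~\ref{ex:mod:Poisson} for $M\times\overline N$ yields $[\Theta_\al,\Theta_\be]=\Theta_{[\al,\be]_{\pi_N}}$; restricting to $\graf(\phi)$ and applying $\Psi^{-1}$ gives $[\phi^*\al,\phi^*\be]_A=\phi^*[\al,\be]_{\pi_N}$. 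Finally the two identities \emph{characterize} the structure: the sections $\phi^*\al$ generate $\Gamma(A)$ over $C^\infty(M)$, and the anchor together with the Leibniz rule then determine the bracket on all of $\Gamma(A)$.

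The main obstacle I anticipate is not conceptual but one of careful bookkeeping: pinning down the signs arising from the opposite structure on $\overline N$ and from $\sigma$, so that the two product projections match the sign-free maps $i$ and $j$ of the statement, and checking that the restriction of $[\Theta_\al,\Theta_\be]$ to $\graf(\phi)$ is independent of the chosen $1$-form extensions (which is exactly what the subalgebroid property of the previous proposition guarantees). Beyond these points, the only place where anything substantial is used is the $\phi$-relatedness $X_\al\sim_\phi Y_\al$, which is the Poisson condition and is precisely what makes the conormal sections $\Theta_\al$ close under the cotangent bracket.
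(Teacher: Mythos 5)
Your proposal is correct and follows the same route as the paper: identify $\phi^*T^*N$ with $\nu^*(\graf(\phi))$ via $(m,\eta)\mapsto(\d\phi_m^*\eta,-\eta)$, transport the subalgebroid structure, obtain $i$ and $j$ from the two projections, and note that the pullback sections $\phi^*\al$ generate $\Gamma(A)$ over $C^\infty(M)$ so that \eqref{eq:bracket:pull:back:algbrd}--\eqref{eq:anchor:pull:back:algbrd} characterize the structure. The only difference is that you carry out explicitly (and correctly, including the sign bookkeeping for $\overline{N}$) the computation of the bracket of the extensions $\Theta_\al$, which the paper leaves implicit.
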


\begin{proof}
We have an isomorphism of vector bundles:
\[
\xymatrix{
\phi^{*}T^*N\ar[r]\ar[d]& \nu^*(\graf(\phi))\subset T^*M\times T^*\overline{N}\ar@<-6ex>[d]\ar@<7ex>[d]\\
M\ar[r] &\graf(\phi))\subset M\times \overline{N}}
\]
given by $(m,\al)\mapsto ((\d_m\phi)^*\al,-\al)$ and covering the diffeomorphism $m\mapsto(m,\phi(m))$.
Under this isomorphism, the Lie bracket and the anchor on $\nu^*(\graf(\phi))$ lead to the expressions
\eqref{eq:bracket:pull:back:algbrd} and \eqref{eq:anchor:pull:back:algbrd}, while composition with the
projections on each factor give Lie algebroid morphisms.

In general, the natural map $\phi^{*}T^*N\to T^*M$ will not be injective, so $\phi^{*}T^*N$
will not be a Lie subalgebroid of $T^*M$. Still, sections of $\phi^{*}T^*N$ of the form $\phi^{*}\al$
where $\al\in\Omega^1(N)$, can be identified with the sections $\phi^*\al\in \Omega^1(M)$
(the usual pullback of 1-forms). An arbitrary section of $\phi^{*}T^*N$ is just a 1-form along $\phi$
and can be written as a finite combination $\sum_i f_i\phi^{*}\al_i$, where $f_i\in C^\infty(M)$ and
$\al_i\in\Omega^1(N)$. Hence, expressions \eqref{eq:bracket:pull:back:algbrd} and
\eqref{eq:anchor:pull:back:algbrd} determine completely the Lie bracket and the anchor of $A=\phi^{*}T^*N$.
\end{proof}

Henceforth, for a Poisson map  $\phi:M\to N$, we will denote by $\X^\bullet(\phi)$ the complex of the Lie algebroid $\phi^*T^*N$ and by $H^\bullet_\pi(\phi)$ its cohomology. Our basic definition is the following:

\begin{defn}
\label{defn:mod:map}
Let $\phi:M\to N$ be a Poisson map. The \textbf{modular class} of $\phi$ is the class:
\[ \mod(\phi):=i^*\mod(M)-j^*\mod(N)\in H^1_\pi(\phi). \]
\end{defn}

In order to motivate this definition, suppose that the map $\phi:M\to N$ was covered by a Lie algebroid
homomorphism $\Phi:T^*M\to T^*N$  (the dotted arrow in the
diagram), making the diagram \eqref{diag:mod:class:poisson} commutative.
The composition property for the modular classes  of Lie algebroid
morphisms yields:
\[ \mod(j)=\mod(i)+i^*\mod(\Phi).\]
Hence, we would have:
\begin{align*}
i^*\mod(\Phi)&=\mod(j)-\mod(i)\\
						 &=\left(\mod(\phi^{*}T^*N)-j^*\mod(T^*N)\right)-\left(\mod(\phi^{*}T^*N)-i^*\mod(T^*M)\right)\\
						 &=i^*\mod(T^*M)-j^*\mod(T^*N)
\end{align*}
and this justifies our definition.

\subsection{Modular class of a Poisson map: basic properties}  %
\label{subsec:mod:Poisson:propert}                             %

Notice that an element $P\in\Omega^k(\phi^{*}T^*N)$ is just a $k$-vector field along $\phi$, i.e., a smooth
map $P:M\to \wedge^kTN$ such that $P_x\in\wedge^kT_{\phi(x)}N$. Therefore, the modular class of $\phi$
has representatives which are vector fields along $\phi$. Note also, that the maps
$i^*:H^\bullet_\pi(M)\to H^\bullet_\pi(\phi)$ and $j^*:H^\bullet_\pi(\phi)\to H^\bullet_\pi(N)$
at the level of multivector fields are just:
\begin{align*}
&i^*:\X^\bullet(M)\to \X^\bullet(\phi),\ P\mapsto \d\phi\cdot P,\\
&j^*:\X^\bullet(\phi)\to \X^\bullet(N),\ Q\mapsto P\circ\phi.
\end{align*}
The following lemma, which gives an explicit representative for the modular class, should now be obvious:

\begin{lem}
Let $\phi:M\to N$ be a Poisson map and fix volume forms $\mu\in\Omega^{\top}(M)$, $\nu\in\Omega^{\top}(N)$.
The modular class $\mod(\phi)$ is represented by:
\[ X_{\mu,\nu}=\d\phi\cdot X_{\mu}-X_{\nu}\circ\phi,\]
where $X_{\mu}$ and $X_{\nu}$ are the modular vector fields of $\pi_M$ and $\pi_N$ relative to $\mu$ and $\nu$.
\end{lem}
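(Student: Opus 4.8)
The plan is to prove the lemma by a direct computation at the level of cochains, using Definition \ref{defn:mod:map} together with the explicit descriptions of the cochain maps $i^*$ and $j^*$. The only genuine content is the identification of these two maps on multivector fields; once that is in place, the formula for $X_{\mu,\nu}$ drops out by linearity, so no convergence, closedness, or independence argument is really needed for the statement as phrased.

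First I would fix the representatives coming from the chosen volume forms. By definition of the modular vector field, $X_\mu\in\X^1(M)$ is $\d_\pi$-closed and $\mod(M)=[X_\mu]\in H^1_\pi(M)$; likewise $X_\nu\in\X^1(N)$ represents $\mod(N)\in H^1_\pi(N)$. Next I would apply the cochain-level formulas. Since $i\colon\phi^*T^*N\to T^*M$ and $j\colon\phi^*T^*N\to T^*N$ are the Lie algebroid morphisms of Proposition \ref{prop:pull:back:algbrd}, the induced maps on $A$-forms — which, under the identifications $\Omega^\bullet(T^*M)\cong\X^\bullet(M)$, $\Omega^\bullet(T^*N)\cong\X^\bullet(N)$ and $\Omega^\bullet(\phi^*T^*N)\cong\X^\bullet(\phi)$, become maps of multivector fields — are
\[ i^*\colon \X^\bullet(M)\to\X^\bullet(\phi),\quad P\mapsto \d\phi\cdot P, \qquad j^*\colon \X^\bullet(N)\to\X^\bullet(\phi),\quad Q\mapsto Q\circ\phi. \]
Applying these to the two representatives yields $\d\phi\cdot X_\mu$ for $i^*\mod(M)$ and $X_\nu\circ\phi$ for $j^*\mod(N)$, and, being images of $\d_\pi$-closed cochains under cochain maps, both are again closed in $\X^\bullet(\phi)$. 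Hence by Definition \ref{defn:mod:map} and linearity
\[ \mod(\phi)=i^*\mod(M)-j^*\mod(N)=\left[\d\phi\cdot X_\mu - X_\nu\circ\phi\right]=[X_{\mu,\nu}], \]
which is exactly the claim.

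The step requiring the most care — and the only place where anything must be checked — is the identification of $i^*$ and $j^*$ at the cochain level. The pullback of $A$-forms along a Lie algebroid morphism $\Phi\colon A\to B$ is $(\Phi^*\omega)(a_1,\dots,a_k)=\omega(\Phi a_1,\dots,\Phi a_k)$; specializing to $A=T^*M$ and reading $\Omega^k(T^*M)=\Gamma(\wedge^k TM)=\X^k(M)$, the fibrewise map dual to $i$ is $\d\phi\colon TM\to TN$, so that $i^*$ sends a $k$-vector field $P$ to $\wedge^k(\d\phi)\cdot P=\d\phi\cdot P$, while $j$ is the identity on fibres covering $\phi$, giving $Q\mapsto Q\circ\phi$. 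One should also verify that $j$ targets $(T^*N,\pi_N)$ rather than its opposite — this is exactly the content of the bracket relation \eqref{eq:bracket:pull:back:algbrd} — so that the sign convention hidden in $\overline N$ has already been absorbed and $X_\nu$ in the final formula is the genuine modular vector field of $\pi_N$, with no extra sign. With these identifications settled, the proof is immediate, which is why the statement can be called obvious.
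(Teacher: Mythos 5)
Your proposal is correct and follows the same route as the paper, which simply records the cochain-level formulas $i^*P=\d\phi\cdot P$ and $j^*Q=Q\circ\phi$ and then declares the lemma obvious from Definition \ref{defn:mod:map}. Your extra care in checking the dualization $\Omega^k(T^*M)\cong\X^k(M)$ and the sign convention coming from $\overline{N}$ is a worthwhile elaboration of exactly the point the paper leaves implicit, not a different argument.
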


We will refer to $X_{\mu,\nu}$ as the \textbf{modular vector field} of $\phi$ relative to $\mu$ and $\nu$.

\begin{cor}
The class $\mod(\phi)$ is the obstruction to the existence of volume forms $\mu\in\Omega^{\top}(M)$ and $\nu\in\Omega^{\top}(N)$,
such that the modular vector fields $X_\mu$ and $X_\nu$ are $\phi$-related.
\end{cor}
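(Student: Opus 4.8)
The plan is to prove the two implications of the equivalence ``$\mod(\phi)=0$ if and only if such $\mu,\nu$ exist'' separately, the nontrivial one resting on an explicit description of the degree-one coboundaries of the complex $\X^\bullet(\phi)$. By definition, the vector fields $X_\mu$ on $M$ and $X_\nu$ on $N$ are $\phi$-related exactly when $\d\phi\cdot X_\mu=X_\nu\circ\phi$, that is, when the representative $X_{\mu,\nu}=\d\phi\cdot X_\mu-X_\nu\circ\phi$ of the preceding lemma vanishes identically. The trivial implication is then immediate: if volume forms $\mu\in\Omega^{\top}(M)$, $\nu\in\Omega^{\top}(N)$ exist with $X_\mu$ and $X_\nu$ being $\phi$-related, then $X_{\mu,\nu}=0$ represents $\mod(\phi)$, so $\mod(\phi)=0$.

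For the converse, the first step is to compute the Lie algebroid differential $\d_A\colon\Omega^0(A)\to\Omega^1(A)$ of $A=\phi^{*}T^*N$ on functions, under the identification $\Omega^1(A)=\X^1(\phi)$. Pairing $\d_A h$ against a section $\phi^{*}\al$ and using $\langle\d_A h,\phi^{*}\al\rangle=\rho_A(\phi^{*}\al)(h)$ together with the anchor formula \eqref{eq:anchor:pull:back:algbrd}, a short computation (tracking the sign through $\pi_M^\sharp$) gives
\[
\d_A h=-\,\d\phi\cdot X_h,\qquad h\in C^\infty(M),
\]
where $X_h=\pi_M^\sharp(\d h)$ is the hamiltonian vector field of $h$ on $M$. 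Thus the degree-one coboundaries of $\X^\bullet(\phi)$ are precisely the vector fields along $\phi$ of the form $\d\phi\cdot X_h$.

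The second step matches this coboundary space with the freedom in choosing the volume forms. Writing $\mu=e^{u}\mu_0$ and $\nu=e^{v}\nu_0$ and invoking the change-of-form rule \eqref{eq:change:form}, one has $X_\mu=X_{\mu_0}-X_u$ and $X_\nu=X_{\nu_0}-X_v$, hence
\[
X_{\mu,\nu}=X_{\mu_0,\nu_0}-\bigl(\d\phi\cdot X_u-X_v\circ\phi\bigr).
\]
Because $\phi$ is a Poisson map one has $\d\phi\cdot\pi_M^\sharp(\phi^{*}\al)=\pi_N^\sharp(\al)\circ\phi$ for $\al\in\Omega^1(N)$, so in particular $X_v\circ\phi=\d\phi\cdot X_{\phi^{*}v}$; the correction term therefore equals $\d\phi\cdot X_{u-\phi^{*}v}$ and, by the first step, ranges over all coboundaries as $u,v$ vary. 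Consequently, as $\mu$ and $\nu$ run over all volume forms, the family $\set{X_{\mu,\nu}}$ fills out the entire coset $X_{\mu_0,\nu_0}+\d_A(C^\infty(M))$, that is, the whole cohomology class $\mod(\phi)$.

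The conclusion is now formal. If $\mod(\phi)=0$, then for arbitrary fixed $\mu_0,\nu_0$ the cocycle $X_{\mu_0,\nu_0}$ is a coboundary, say $X_{\mu_0,\nu_0}=\d\phi\cdot X_h$; taking $\mu=e^{h}\mu_0$ and $\nu=\nu_0$ then yields $X_{\mu,\nu}=X_{\mu_0,\nu_0}-\d\phi\cdot X_h=0$, so $X_\mu$ and $X_\nu$ are $\phi$-related. The one genuinely computational point, and the step I expect to demand the most care, is the explicit evaluation of $\d_A h$ as a vector field along $\phi$: it interweaves the anchor of the pullback algebroid, the identification $\Omega^1(A)=\X^1(\phi)$, and the sign bookkeeping through $\pi_M^\sharp$. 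Everything else is formal once this identity and the Poisson-map relation $\d\phi\cdot\pi_M^\sharp(\phi^{*}\al)=\pi_N^\sharp(\al)\circ\phi$ are in place.
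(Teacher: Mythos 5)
Your proof is correct and follows essentially the same route as the paper's: identify the degree-one coboundaries of $\X^\bullet(\phi)$ as the fields $\pm\,\d\phi\cdot X_h$ (equivalently $\d_A h=-\d\phi\cdot X_h$) and absorb an exact representative $X_{\mu_0,\nu_0}$ by rescaling the volume form on $M$ via \eqref{eq:change:form}. Your version merely spells out the sign computation for $\d_A h$ and the coset structure more explicitly than the paper does, and the signs all check out.
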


\begin{proof}
The Poisson map $\phi$ has trivial modular class if
  its modular vector fields  are exact in the Lie algebroid cohomology of $\phi^*T^*N$, i.e., if for each $\mu\in\Omega^{\top}(M)$ and $\nu\in\Omega^{\top}(N)$,
 \[X_{\mu,\nu}=\d_{\phi^*T^*N} f=-\d \phi \cdot X_f, \quad \mbox{ for some $f\in C^\infty(M)$}.\]
By definition $X_{\mu,\nu}=\d\phi \cdot X_\mu-X_\nu\smalcirc \phi$, hence we have $\d\phi.(X_\mu+X_f)=X_\nu\smalcirc \phi$, and taking into account equation (\ref{eq:change:form}), we conclude that
 $X_{e^{-f}\mu}$ and $X_\nu$ are $\phi$-related.
\end{proof}

\begin{cor}
The modular class of a Poisson diffeomorphism is trivial.
\end{cor}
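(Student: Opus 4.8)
The plan is to produce, for a given Poisson diffeomorphism $\phi:M\to N$, a pair of volume forms $\mu\in\Omega^{\top}(M)$ and $\nu\in\Omega^{\top}(N)$ for which the explicit representative $X_{\mu,\nu}=\d\phi\cdot X_\mu-X_\nu\smalcirc\phi$ supplied by the Lemma vanishes identically. By the preceding Corollary this is exactly the assertion that the modular vector fields can be made $\phi$-related, which forces $\mod(\phi)=0\in H^1_\pi(\phi)$.

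Since $\phi$ is invertible, I would fix an arbitrary $\mu\in\Omega^{\top}(M)$ and set $\nu:=(\phi^{-1})^*\mu$, the pushforward volume form, so that $\phi_*\mu=\nu$. The key computation is to check that $X_\mu$ and $X_\nu$ are $\phi$-related, i.e. $\phi_*X_\mu=X_\nu$. For this I would use that a Poisson map carries hamiltonian vector fields to $\phi$-related ones: for $g\in C^\infty(N)$ one has $\phi_*X^M_{g\smalcirc\phi}=X^N_g$. Applying $\phi_*$ to the defining relation $\Lie_{X^M_{g\smalcirc\phi}}\mu=X_\mu(g\smalcirc\phi)\,\mu$, and using naturality of the Lie derivative under a diffeomorphism together with $\phi_*\mu=\nu$, gives $\Lie_{X^N_g}\nu=\bigl(X_\mu(g\smalcirc\phi)\smalcirc\phi^{-1}\bigr)\nu=(\phi_*X_\mu)(g)\,\nu$. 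Comparing with $\Lie_{X^N_g}\nu=X_\nu(g)\,\nu$ yields $X_\nu=\phi_*X_\mu$, which is precisely the desired $\phi$-relatedness. Hence $X_{\mu,\nu}=0$ and the class it represents is zero.

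The only delicate point is the bookkeeping in the pushforward step, namely keeping track of the identifications $\phi_*(h\mu)=(h\smalcirc\phi^{-1})\,\nu$ and $(\phi_*X)(g)=X(g\smalcirc\phi)\smalcirc\phi^{-1}$; but no genuine obstacle arises, precisely because $\phi$ is a diffeomorphism and everything can be transported freely between $M$ and $N$. As an alternative, more structural argument, one notes that a Poisson diffeomorphism \emph{does} induce a Lie algebroid isomorphism $\Phi:T^*M\to T^*N$ completing the diagram \eqref{diag:mod:class:poisson}, so that $i$ and $j$ become isomorphisms; since an isomorphism preserves the modular class, $\Phi^*\mod(T^*N)=\mod(T^*M)$, whence $\mod(\Phi)=0$. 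Feeding this into the identity $i^*\mod(\Phi)=i^*\mod(T^*M)-j^*\mod(T^*N)$ established just after Definition \ref{defn:mod:map}, and using $\mod(T^*M)=2\mod(M)$, $\mod(T^*N)=2\mod(N)$, gives $2\mod(\phi)=0$ and hence $\mod(\phi)=0$. I would present the first, elementary argument as the main proof and mention the second only as a remark.
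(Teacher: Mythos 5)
Your main argument is correct and is essentially the paper's own proof: the paper fixes $\nu$ on $N$ and takes $\mu=\phi^*\nu$, then verifies by the same naturality-of-the-Lie-derivative computation that $X_{\phi^*\nu}$ and $X_\nu$ are $\phi$-related, which is exactly your pushforward calculation read in the opposite direction. (Your alternative remark via the induced Lie algebroid isomorphism $\Phi:T^*M\to T^*N$ is also sound, but it is not the route the paper takes.)
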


\begin{proof}
Suppose $\phi:M\to N$ is a Poisson diffeomorphism and let $\nu$ be a volume form on $N$. The modular vector field of $\phi$ relative to $\phi^*\nu$ and $\nu$ vanishes:
\begin{align*}
\d \phi\cdot X_{\phi^*\nu}(f)\,\phi^*\nu & =X_{\phi^*\nu}(f\smalcirc\phi)\,\phi^*\nu= \Lie_{\pi_M^\sharp(\phi^*\d f)}\phi^*\nu\\
&=\phi^*\Lie_{\pi_N^\sharp(\d f)}\nu=(X_\nu(f)\smalcirc \phi)\, \phi^*\nu.
\end{align*}
\end{proof}

Our next proposition gives the composition property of the modular
classes of Poisson maps. For that, we
note that if $\phi:M\to N$ and $\psi:N\to Q$ are Poisson maps, there are induced maps in cohomology:
\begin{align*}
&\psi_*:H^\bullet_\pi(\phi)\to H^\bullet_\pi(\psi\circ\phi),\ P\mapsto \d\psi\cdot P ,\\
&\phi^*:H^\bullet_\pi(\psi)\to H^\bullet_\pi(\psi\circ\phi),\ P\mapsto P\circ\phi .
\end{align*}

\begin{prop}\label{prop:comp:poisson:map}
Let $\phi:M\to N$ and $\psi:N\to Q$ be Poisson maps. Then:
\[ \mod(\psi\circ\phi)=\psi_*\mod(\phi)+\phi^*\mod(\psi). \]
\end{prop}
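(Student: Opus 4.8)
The plan is to prove the identity directly at the level of representatives, using the explicit modular vector fields supplied by the Lemma above. Fix volume forms $\mu\in\Omega^{\top}(M)$, $\nu\in\Omega^{\top}(N)$ and $\omega\in\Omega^{\top}(Q)$, and write $X_\mu,X_\nu,X_\omega$ for the associated modular vector fields of $\pi_M,\pi_N,\pi_Q$. By the Lemma, $\mod(\phi)$ is represented by the vector field along $\phi$
\[ X_{\mu,\nu}=\d\phi\cdot X_\mu-X_\nu\circ\phi, \]
the class $\mod(\psi)$ by the vector field along $\psi$
\[ X_{\nu,\omega}=\d\psi\cdot X_\nu-X_\omega\circ\psi, \]
and $\mod(\psi\circ\phi)$ by the vector field along $\psi\circ\phi$
\[ X_{\mu,\omega}=\d(\psi\circ\phi)\cdot X_\mu-X_\omega\circ(\psi\circ\phi). \]
Since each of these classes is independent of the chosen volume form, it suffices to verify the claimed identity for these particular representatives.

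Next I would unwind the two cohomology operations on these representatives. Applying $\psi_*$, which post-composes a vector field along $\phi$ with $\d\psi$, yields
\[ \psi_* X_{\mu,\nu}=\d\psi\cdot\d\phi\cdot X_\mu-\d\psi\cdot(X_\nu\circ\phi), \]
while applying $\phi^*$, which pre-composes a vector field along $\psi$ with $\phi$, yields
\[ \phi^* X_{\nu,\omega}=(\d\psi\cdot X_\nu)\circ\phi-X_\omega\circ(\psi\circ\phi). \]
The crucial observation is that the two mixed terms coincide: evaluated at a point $x\in M$ both equal $\d_{\phi(x)}\psi\cdot X_\nu(\phi(x))$, so that $\psi_*(X_\nu\circ\phi)=(\d\psi\cdot X_\nu)\circ\phi$ as vector fields along $\psi\circ\phi$. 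This is exactly the point at which the compatibility between the two induced operations (applying $\d\psi$ and composing with $\phi$) is used, and it is what makes the two contributions interlock.

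Adding the two expressions, the mixed terms cancel and the chain rule $\d(\psi\circ\phi)=\d\psi\cdot\d\phi$ collapses the remaining term, leaving
\[ \psi_* X_{\mu,\nu}+\phi^* X_{\nu,\omega}=\d(\psi\circ\phi)\cdot X_\mu-X_\omega\circ(\psi\circ\phi)=X_{\mu,\omega}, \]
which is precisely the chosen representative of $\mod(\psi\circ\phi)$. Passing to cohomology then gives the stated formula, where in fact the representing cocycles already agree on the nose. I do not expect a serious obstacle here: the argument is a short computation whose only delicate point is the careful bookkeeping of base points needed to see the mixed terms cancel. The one structural fact that must be in place is that $\psi_*$ and $\phi^*$ are genuine chain maps, so that the representative-level identity descends to $H^1_\pi(\psi\circ\phi)$; this is, however, already built into the assertion that they induce maps in cohomology.
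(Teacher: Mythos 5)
Your proof is correct, but it takes a different route from the paper's. The paper argues entirely at the level of cohomology: it writes $\mod(\psi\circ\phi)=i_{\psi\circ\phi}^*\mod(M)-j_{\psi\circ\phi}^*\mod(Q)$, uses the commutativity relations $i_{\psi\circ\phi}^*=\psi_*\circ i_\phi^*$ and $j_{\psi\circ\phi}^*=\phi^*\circ j_\psi^*$, and then adds and subtracts the middle term $\psi_*j_\phi^*\mod(N)=\phi^*i_\psi^*\mod(N)$ to regroup the expression into $\psi_*\mod(\phi)+\phi^*\mod(\psi)$. The compatibility $\psi_*\circ j_\phi^*=\phi^*\circ i_\psi^*$ that makes this add-and-subtract work is exactly the cancellation of your two mixed terms, just seen one level up. Your computation with the explicit representatives $X_{\mu,\nu}$, $X_{\nu,\omega}$, $X_{\mu,\omega}$ actually proves the slightly stronger cocycle-level identity $X_{\mu,\lambda}=\d\psi\cdot X_{\mu,\nu}+X_{\nu,\lambda}\circ\phi$, which the paper records separately as a remark immediately after the proposition; so you get both the proposition and that remark in one stroke. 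What the paper's more formal argument buys in exchange is independence from the existence of explicit volume-form representatives, so it transfers verbatim to situations (e.g.\ non-orientable manifolds, or the Lie algebroid morphism setting) where one would rather not fix cocycles.
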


\begin{proof}
The following diagram commutes:
\[
\xymatrix{
& & H^\bullet_\pi(N)\ar[dl]_{j_\phi^*}\ar[dr]^{i_\psi^*}\\
H^\bullet_\pi(M)\ar[r]^{i_\phi^*}\ar[drr]_{i_{\psi\circ\phi}^*}& H^\bullet_\pi(\phi)\ar[dr]^{\psi_*}
& &H^\bullet_\pi(\psi)\ar[dl]_{\phi^*}&H^\bullet_\pi(Q)\ar[l]_{j_\psi^*}\ar[dll]^{j_{\psi\circ\phi}^*}\\
& & H^\bullet_\pi(\psi\circ\phi)}
\]
Hence, we find:
\begin{align*}
\mod(\psi\circ\phi)&=i_{\psi\circ\phi}^*\mod(M)-j_{\psi\circ\phi}^*\mod(Q)\\
&=\psi_*i_\phi^*\mod(M)-\phi^*j_\psi^*\mod(Q)\\
&=\psi_*i_\phi^*\mod(M)-\psi_*j_\phi^*\mod(N)+\psi_*j_\phi^*\mod(N)-\phi^*j_\psi^*\mod(Q)\\
&=\psi_*(i_\phi^*\mod(M)-j_\phi^*\mod(N))+\phi^*(i_\psi^*\mod(N)-j_\psi^*\mod(Q))\\
&=\psi_*\mod(\phi)+\phi^*\mod(\psi) .
\end{align*}
\end{proof}

Note that if $\mu$, $\nu$ and $\lambda$ are volume forms on $M$, $N$ and $Q$, respectively, then
the previous proposition can be stated in terms of the corresponding modular vector fields as:
\[ X_{\mu,\lambda}=\d\psi\cdot X_{\mu,\nu}+X_{\nu,\lambda}\smalcirc\phi.\]

Finally, we show that the modular class of a Poisson map can be seen as the characteristic class
of a representation, in analogy with Proposition \ref{prop:mod:morphism:rep}.

\begin{prop}
\label{prop:mod:poisson:rep}
Let $\phi:M\to N$ be a Poisson map. There is a natural representation of $\phi^*T^*N$ on the
line bundle $L^\phi:=\otimes^2\wedge^\top T^*M \otimes^2\wedge^\top \phi^*TN$, and we have:
\[ \mod(\phi)=\frac{1}{2}\ch(L^\phi).\]
\end{prop}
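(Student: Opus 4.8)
The plan is to exhibit $L^\phi$ explicitly as a tensor product of representations pulled back from the two canonical line-bundle representations $L^{T^*M}$ and $L^{T^*N}$ along the morphisms $i$ and $j$ of \eqref{diag:mod:class:poisson}, and then to read off its characteristic class using the multiplicativity and functoriality of $\ch$ recorded in Subsection \ref{subsec:mod:Lie:algbrd}. Recall from Proposition \ref{prop:pull:back:algbrd} that $i:\phi^{*}T^*N\to T^*M$ covers $\mathrm{id}_M$ while $j:\phi^{*}T^*N\to T^*N$ covers $\phi$. Pulling back the canonical representation $L^{T^*M}$ of $T^*M$ along $i$ produces a representation of $A=\phi^{*}T^*N$ whose underlying bundle, since $i$ covers the identity, is just $L^{T^*M}=\otimes^2\wedge^\top T^*M$; pulling back $L^{T^*N}$ along $j$ produces a representation of $A$ with underlying bundle $\phi^{*}L^{T^*N}=\otimes^2\wedge^\top\phi^{*}T^*N$. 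I would then declare the representation on $L^\phi$ to be the tensor product of the first with the dual of the second.

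First I would check that this tensor product has the right underlying line bundle: dualizing $\otimes^2\wedge^\top\phi^{*}T^*N$ gives $\otimes^2\wedge^\top\phi^{*}TN$, and tensoring with $\otimes^2\wedge^\top T^*M$ yields exactly $L^\phi=\otimes^2\wedge^\top T^*M \otimes^2\wedge^\top\phi^{*}TN$, as required. This construction also makes the $A$-representation structure manifestly natural. Next I would compute the class. By the multiplicativity $\ch(L_1\otimes L_2)=\ch(L_1)+\ch(L_2)$, the rule $\ch(L^*)=-\ch(L)$, and the functoriality $\ch(\phi^{*}L)=\Phi^*\ch(L)$ under pullback, all from Subsection \ref{subsec:mod:Lie:algbrd}, one obtains
\[
\ch(L^\phi)=i^*\ch(L^{T^*M})-j^*\ch(L^{T^*N})=i^*\mod(T^*M)-j^*\mod(T^*N),
\]
where $i^*,j^*$ are the cohomology maps induced by the morphisms and I have used that $\ch(L^A)=\mod(A)$ by definition.

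Finally, invoking the factor-$2$ relations $\mod(T^*M)=2\mod(M)$ and $\mod(T^*N)=2\mod(N)$ of Example \ref{ex:mod:Poisson}, together with the definition $\mod(\phi)=i^*\mod(M)-j^*\mod(N)$ from Definition \ref{defn:mod:map}, this becomes $\ch(L^\phi)=2\mod(\phi)$, which is the claim. An equivalent route, which avoids building the representation by hand, is to note that $L^\phi=L^j\otimes(L^i)^*$, where $L^i,L^j$ are the line-bundle representations attached to $i,j$ by Proposition \ref{prop:mod:morphism:rep}: since $L^i=L^A\otimes(L^{T^*M})^*$ and $L^j=L^A\otimes\phi^{*}(L^{T^*N})^*$, the shared factor $L^A\otimes(L^A)^*$ cancels and one is left with $L^\phi$, whence $\mod(\phi)=\tfrac12(\mod(j)-\mod(i))=\tfrac12\ch(L^\phi)$.

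The computation is essentially formal once the pullback representations are correctly identified, so I expect the only delicate point to be bookkeeping: one must keep the pullback of representations (which changes the $A$-action but, for $i$ covering $\mathrm{id}_M$, not the underlying bundle) cleanly separated from the induced maps $i^*,j^*$ on cohomology, and verify that the natural $A$-representation on $L^\phi$ genuinely coincides with the tensor product just built, rather than merely agreeing with it up to an isomorphism that is invisible at the level of cohomology.
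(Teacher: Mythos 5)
Your proposal is correct and follows essentially the same route as the paper: the paper's proof constructs exactly the two representations you describe (the explicit formulas it writes down for $\nabla_{\phi^*\al}$ on $\otimes^2\wedge^\top T^*M$ and on $\otimes^2\wedge^\top\phi^*T^*N$ are precisely the pullbacks $i^{*}L^{T^*M}$ and $j^{*}L^{T^*N}$), takes the tensor product of the first with the dual of the second, and concludes via the multiplicativity and functoriality of $\ch$ together with the factor-$2$ relation of Example \ref{ex:mod:Poisson}. Your bookkeeping of the induced maps $i^*,j^*$ and the alternative derivation via $L^i,L^j$ are both sound.
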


\begin{proof}
We define a representation of $\phi^*T^*N$ on the line bundle $\otimes^2\wedge^\top T^*M$ by setting:
\[ \nabla_{\phi^*\al}(\mu\otimes\nu):=[\phi^*\al,\mu]_{\pi_M}\otimes\nu+\mu\otimes\Lie_{\pi^\sharp_M\phi^*\al}\nu.\]
and another representation on $\otimes^2\wedge^\top \phi^*T^*N$ by setting:
\[ \nabla_{\phi^*\al}(\phi^*\mu\otimes\phi^*\nu):=\phi^*[\al,\mu]_{\pi_N}\otimes\phi^*\nu+
\phi^*\mu\otimes\phi^*\Lie_{\pi^\sharp_N \al}\nu.\]
The tensor product of the first representation with the dual of the second representation defines a
representation of $\phi^*T^*N$ on the line bundle
\[ L^\phi:=\otimes^2\wedge^\top T^*M \otimes^2\wedge^\top \phi^*TN.\]
The rest follows from the definitions taking into account Example \ref{ex:mod:Poisson}.
\end{proof}

\section{Poisson immersions and submanifolds}               %
\label{sec:Poisson:submanfld}                               %

In this section we study the modular class of a Poisson immersion.

\subsection{The relative modular class}                        %
\label{subsec:Poisson:relative}                                %

Let $(M,\pi)$ be a Poisson manifold and let $N\subset M$ be a Poisson submanifold. This means that the submanifold
$N$ carries a Poisson structure $\pi_N$ and the inclusion $\phi:N\hookrightarrow M$ is a Poisson map. Notice that $\phi^*T^*M=T^*_N M$ is just the restricted cotangent Lie algebroid of $M$ along $N$. The relevant cohomology is then the \textbf{restricted Poisson cohomology} $H^\bullet_{\pi,N}(M):=H^\bullet(\phi)$, introduced by Ginzburg and Lu in \cite{GL}. The corresponding complex can be described as follows: the cochains
are the multivector fields along $N$, which we denote by $\X^\bullet_N(M):=\Gamma(\wedge^\bullet T_N M)$, while
the differential is given by:
\[ \d_\pi P=[\widetilde{P},\pi]|_N,\]
where $\widetilde{P}\in\X^\bullet(M)$ denotes any (local) extension of $P\in\X^\bullet_N(M)$ to $M$(\footnote{Ginzburg and Lu use the term
\emph{relative cohomology} instead of restricted cohomology. We believe that
their term should be reserved for the cohomology $H^\bullet_\pi(M,N)$ of the quotient complex $\X^\bullet(M,N):=\X^\bullet_N(M)/\X^\bullet(N)$,
associated with (the dual of) the anchor $\pi^\sharp:T^*_N M\to TN$ (see \cite{CF1}).}).

\begin{defn}
The \textbf{relative modular class} of a Poisson submanifold $\phi:N\hookrightarrow M$ is defined as minus the
modular class of $\phi$, denoted\footnote{Note that in this section the role of $N$ and $M$ is switched from the previous section. This will also be the case in the following sections whenever we deal with submanifolds.}:
\[ \mod_N(M):=-\mod(\phi)\in H^1_{\pi,N}(M).\]
\end{defn}

The reason for the minus sign will became clear from the discussion that follows. In particular, the relative modular class $\mod_N(M)$ lies in the first restricted Poisson cohomology $H^1_{\pi,N}(M)$, which is the quotient of the Poisson vector fields along $N$ modulo the hamiltonian
vector fields on $N$.

\begin{rem}
\label{rem:relative:class}
A representative of this class can be obtained by choosing volume forms $\mu\in\Omega^\top(M)$ and $\nu\in\Omega^\top(N)$. Then $\mod_N(M)$ is represented by the vector field along $N$ given by:
\[ X_{\mu,\nu}={X_\mu}|_N-X_\nu. \]
This representative depends only on the values of $\mu$ along $N$ and on $\nu$. Of course, its class is independent of these choices.
\end{rem}

The following elementary properties of the relative modular class follow directly from its definition:

\begin{prop}
\label{prop:basic:relative}
Let $(M,\pi)$ be a Poisson manifold and let $N\subset M$ be a Poisson submanifold. Then:
\begin{enumerate}[(i)]
\item If $\mod_N(M)=0$, then the modular vector field of $M$ relative to any volume form
$\mu\in\Omega^\top(M)$ is tangent to $N$.
\item If $\pi_N=0$, then $X_\mu|_N$ is independent of the volume form $\mu\in\Omega^\top(M)$ and represents $\mod_N(M)$.
\item If $\mod_N(M)=\mod(M)=0$, then $\mod(N)=0$.
\end{enumerate}
\end{prop}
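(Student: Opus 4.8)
My plan is to deduce all three statements directly from the explicit representative of the relative class recorded in Remark~\ref{rem:relative:class}, namely $X_\mu|_N-X_\nu$, combined with the description of the restricted complex. The structural fact I will isolate first is the identification of the degree-one coboundaries: for $f\in C^\infty(N)$ the differential gives $\d_\pi f=[\widetilde f,\pi]|_N=-X_{\widetilde f}|_N$, and since $N$ is a Poisson submanifold this restriction depends only on $f$ and equals $-X_f^N$, where $X_f^N=\pi_N^\sharp(\d f)$ is the hamiltonian vector field of $f$ on $(N,\pi_N)$. Thus the coboundaries in degree one are precisely the hamiltonian vector fields of $N$, all of which are tangent to $N$; this is exactly the description of $H^1_{\pi,N}(M)$ as Poisson vector fields along $N$ modulo hamiltonian vector fields on $N$.

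For (i), if $\mod_N(M)=0$ then, the class being zero, every representative $X_\mu|_N-X_\nu$ is a coboundary, hence a hamiltonian vector field on $N$ and in particular tangent to $N$. As $X_\nu$ is a vector field on $N$, it too is tangent to $N$, so $X_\mu|_N=(X_\mu|_N-X_\nu)+X_\nu$ is tangent to $N$; this holds for every $\mu$. For (ii), when $\pi_N=0$ all hamiltonian vector fields of $N$ vanish, so $X_\nu=0$ for any $\nu$ and the representative reduces to $X_\mu|_N$, which therefore represents $\mod_N(M)$. To see that $X_\mu|_N$ is independent of $\mu$, I will use the change-of-volume formula \eqref{eq:change:form}: writing a second volume form as $g\mu$, one has $X_{g\mu}|_N=X_\mu|_N-X_{\ln|g|}|_N$, and because $\pi$ vanishes identically along $N$ (as $\pi_N=\pi|_N=0$) the term $X_{\ln|g|}|_N=\pi^\sharp(\d\ln|g|)|_N$ is zero, giving $X_{g\mu}|_N=X_\mu|_N$.

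For (iii), I use $\mod(M)=0$ to choose a volume form $\mu$ on $M$ with vanishing modular vector field, $X_\mu=0$. With this choice the representative of $\mod_N(M)$ becomes $-X_\nu$, and the hypothesis $\mod_N(M)=0$ forces $-X_\nu$ to be a coboundary, i.e.\ $X_\nu$ is a hamiltonian vector field on $N$. Hence $\mod(N)=[X_\nu]=0$ in $H^1_\pi(N)$, as claimed.

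The only genuinely delicate point, and the one underlying all three parts, is the first-paragraph identification of the degree-one coboundaries with the hamiltonian vector fields of $N$ (which are automatically tangent to $N$); this is precisely where the Poisson-submanifold hypothesis is used, via the fact that $X_{\widetilde f}|_N$ depends only on $f|_N$. Once this is in place, each part is a short manipulation of the representative $X_\mu|_N-X_\nu$ together with \eqref{eq:change:form}.
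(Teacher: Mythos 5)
Your argument is correct and is exactly the direct verification the paper has in mind: the paper offers no written proof, asserting only that these properties "follow directly from the definition," and your identification of the degree-one coboundaries of the restricted complex with the hamiltonian vector fields of $(N,\pi_N)$ (using that $\pi^\sharp$ kills $\nu^*(N)$ for a Poisson submanifold, so $X_{\widetilde f}|_N$ depends only on $f|_N$) is precisely the point being left to the reader. The subsequent manipulations of the representative $X_\mu|_N-X_\nu$ via \eqref{eq:change:form} are all sound.
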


\begin{ex}
For any open subset $O\hookrightarrow M$ the relative modular class vanishes: $\mod_O(M)=0$.
\end{ex}

\begin{ex}
\label{ex:sympl:leaf:R4}
A symplectic leaf can have a non-trivial relative modular class. On $M=\Rr^4$, with coordinates
$(x,y,z,w)$, consider the Poisson structure:
\[ \pi=x\frac{\partial}{\partial x}\wedge \frac{\partial}{\partial y}+\frac{\partial}{\partial z}\wedge\frac{\partial}{\partial w}.\]
The modular vector field relative to the volume form $\mu=\d x\wedge\d y\wedge\d z\wedge\d w$ is given by:
\[ X_\mu=\frac{\partial}{\partial y}. \]
On the other hand, the symplectic leaves of $\pi$ are:
\begin{itemize}
\item The two open 4-dimensional leaves $x>0$ and $x<0$;
\item The 1-parameter family of 2-dimensional leaves $(x,y)=(0,c)$, where $c\in\Rr$.
\end{itemize}
If $S$ is a 2-dimensional symplectic leaf, then $X_\mu$ is not tangent to $S$ so we conclude that $\mod_S(M)\not=0$.
\end{ex}

\begin{ex}
Consider a topologically stable Poisson structure on a compact oriented surface $M=\Sigma$ (see \cite{Radko}).
This is a Poisson structure which vanishes linearly along a set of smooth disjoint closed curves
$Z=\bigcup_{i}\gamma_i$. The (trivial) Poisson submanifold $Z\subset \Sigma$ has no non-trivial
hamiltonian vector fields, so its relative modular class $\mod_Z(\Sigma)$ is represented by a \emph{canonical}
Poisson vector field along $Z$.

In order  to determine  this vector field, choose some volume form $\mu$ on $\Sigma$. The corresponding modular vector field
$X_\mu$ is a Poisson vector field on $\Sigma$, so its flow maps symplectic leaves to symplectic leaves. This
means that $X_\mu$ must be tangent to $Z$ and we find that the relative modular class of $Z$ is given by
\[ \mod_Z(\Sigma)=X_\mu|_Z.\]
In other words, all modular vector fields $X_\mu$ are tangent to $Z$ and induce the same vector field $\mod_Z(\Sigma)$
on $Z$. The periods of the vector field $\mod_Z(\Sigma)$ along the curves $\gamma_i$ give global invariants which were
used in \cite{Radko} to classify the topologically stable Poisson
structures  on surfaces.
\end{ex}

\subsection{Linear holonomy and the relative modular class}    %
\label{subsec:holonomy}                                        %

In order to compute the relative modular class $\mod_N(M)$ of a Poisson submanifold, we observe that the
identification of the conormal bundle as $\nu^*(N)=(TN)^0\subset T^*_N M$ gives a natural representation of the Lie algebroid
$A=T^*_N M$ on the conormal bundle $\nu^*(N)$:
\[ \nabla_\al \be:=[\al,\be], \quad \al\in\Gamma(T^*_N M), \be\in\Gamma(\nu^*(N)).\]
We will denote the dual representation on the normal bundle $\nu(N)$ by the same letter. The line bundle $\wedge^\top\nu(N)$
then becomes a representation of $T^*_N M$, and we have:

\begin{thm}
\label{thm:relative:mod:class}
The relative modular class of a Poisson submanifold $\phi:N\hookrightarrow M$ is given by:
\[ \mod_N(M)=\ch(\wedge^\top\nu(N)).\]
\end{thm}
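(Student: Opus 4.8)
The plan is to read the statement off from Proposition \ref{prop:mod:poisson:rep}. Applied to the inclusion $\phi:N\hookrightarrow M$ — where now $N$ is the source and $M$ the target, so that $\phi^*TM=T_N M$ — that proposition gives $\mod(\phi)=\frac12\ch(L^\phi)$ for the representation of $A=\phi^*T^*M=T^*_N M$ on
\[ L^\phi=\otimes^2\wedge^\top T^*N\otimes\otimes^2\wedge^\top T_N M. \]
Since $\mod_N(M)=-\mod(\phi)$ by definition, everything reduces to identifying $L^\phi$, \emph{as a representation of} $A$, with a tensor power of $\wedge^\top\nu(N)$.

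The key geometric input is that, for a Poisson submanifold, the conormal bundle $\nu^*(N)=(TN)^0$ is an ideal of the Lie algebroid $A=T^*_N M$ on which the anchor $\pi_M^\sharp$ vanishes. Hence restriction of covectors to $TN$ fits into a short exact sequence of $A$-representations
\[ 0\to\nu^*(N)\to T^*_N M\to T^*N\to 0, \]
where $A$ acts on $\nu^*(N)$ by the representation $\nabla_\alpha\beta=[\alpha,\beta]$ of the statement and on the quotient $T^*N$ by the induced cotangent bracket of $N$; dualizing gives the analogous sequence with $TN$, $T_N M$ and $\nu(N)$. Taking top exterior powers, these become isomorphisms of line-bundle representations $\wedge^\top T_N M\cong\wedge^\top TN\otimes\wedge^\top\nu(N)$ and $\wedge^\top T^*_N M\cong\wedge^\top T^*N\otimes\wedge^\top\nu^*(N)$.

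Substituting the first isomorphism into $L^\phi$, the source factor $\wedge^\top T^*N$ cancels the $\wedge^\top TN$ coming from $T_N M$, so that $L^\phi$ becomes, as a representation of $A$, a square of the line bundle $\wedge^\top\nu(N)$. With $\ch(L_1\otimes L_2)=\ch(L_1)+\ch(L_2)$ and $\mod_N(M)=-\mod(\phi)=-\frac12\ch(L^\phi)$, this identifies $\mod_N(M)$ with $\pm\ch(\wedge^\top\nu(N))$, the sign being fixed to $+$ by the conventions (this is the reason for the minus sign in the definition of $\mod_N(M)$).

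The main obstacle is to make the cancellation above rigorous at the level of representations, rather than merely of line bundles. Concretely, one must verify that the two sequences really are sequences of $A$-modules and that the determinant isomorphisms intertwine the \emph{canonical} representation carried by $L^\phi$ — built from the Schouten bracket and the Lie derivative $\Lie_{\pi_N^\sharp}$, i.e.\ from the pullbacks $i^*L^{T^*N}$ and $j^*L^{T^*M}$ — with the tensor product of the isotropy representation on $\wedge^\top\nu(N)$ and the base representations on the $T^*N$ and $TN$ factors. This amounts to checking that the quotient action of $A$ on $\wedge^\top T^*N$ agrees with $\Lie_{\pi_N^\sharp}$, which is the computation underlying the relation $\mod(T^*N)=2\mod(N)$ of Example \ref{ex:mod:Poisson}, together with careful bookkeeping of the factor $2$ and of the overall sign. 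As a consistency check pinning down the sign I would evaluate both sides on the $2$-dimensional symplectic leaf $S$ of Example \ref{ex:sympl:leaf:R4}: there $\mod_S(M)$ is represented by the transverse component $X_\mu|_S=\partial_y$ of the modular vector field, while a direct calculation of the bracket shows $\nabla_{dy}$ acting on $\wedge^\top\nu(S)$ as multiplication by a scalar reproducing the same class, so that both sides are represented by $\partial_y$.
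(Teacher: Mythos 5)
Your argument is essentially identical to the paper's proof: apply Proposition \ref{prop:mod:poisson:rep} to the inclusion, use the determinant of the short exact sequence $0\to\nu^*(N)\to T^*_NM\to T^*N\to 0$ to identify $L^\phi$ with $\otimes^2\wedge^\top\nu(N)$ as representations, and divide the characteristic class by two. If anything you are more careful than the paper's two-line proof about verifying that the determinant isomorphisms are isomorphisms of representations and about pinning down the overall sign (which the paper's own last line in fact garbles, writing $\mod\phi$ where $\mod_N(M)$ is meant), so the extra diligence is warranted rather than superfluous.
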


\begin{proof}
The short exact sequence
\[ 0\rmap \nu^*(N) \hookrightarrow T^*_N M\rmap T^*N\rmap 0 \]
yields the canonical line bundle isomorphism
\[\wedge^\top \nu^*(N) \otimes \wedge^\top T^*N\cong  \wedge^\top T^*_N M. \]

Under this isomorphism, one can identify the representation $L^\phi$ given in
Proposition \ref{prop:mod:poisson:rep} with $\otimes^2\wedge^\top \nu (N)$ defined above.
So, $\mod \phi=\frac{1}{2}\ch(\otimes^2\wedge^\top \nu(N))=\ch(\wedge^\top\nu(N))$.
\end{proof}

The representation of $T^*_N M$ on the normal bundle $\nu(N)$ is well-known: it is called the \textbf{linear holonomy
representation} of $N$ and it plays an important role in several problems in global Poisson geometry (see, e.g., \cite{GG})
In particular, given a cotangent path $a:I=[0,1]\to T^*_N M$ joining $x$ to $y$ in $N$, parallel transport
relative to $\nabla$ along $a$ defines the \textbf{linear Poisson holonomy} map $h_N(a):\nu(N)_x\to\nu(N)_y$
(this is a slight generalization to Poisson submanifolds of the usual definition for symplectic leaves, given
in \cite{Fernandes2,GG}). Then:

\begin{cor}
Let $\phi:N\hookrightarrow M$ be a Poisson submanifold. The class $\mod_N(M)$ is the obstruction to
the existence of a transverse volume form to $N$ invariant under linear holonomy.
\end{cor}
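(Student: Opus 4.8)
The plan is to read off the corollary directly from Theorem~\ref{thm:relative:mod:class}, which identifies $\mod_N(M)$ with the characteristic class $\ch(\wedge^\top\nu(N))$ of the linear holonomy representation of $A=T^*_N M$ on the normal bundle. The crux is the general principle that, for a line bundle $L\in\Rep(A)$, the class $\ch(L)$ vanishes precisely when $L$ carries a nowhere-vanishing parallel section. First I would recall this from the very construction of $\ch$: fixing a nonvanishing $\mu\in\Gamma(L)$ produces the characteristic cocycle $\alpha_\mu$ via $\nabla_X\mu=\langle\alpha_\mu,X\rangle\mu$, with $\ch(L)=[\alpha_\mu]$. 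If $\ch(L)=0$, then $\alpha_\mu=\d_A g$ for some $g\in C^\infty(N)$, and the Leibniz rule $\nabla_X(fs)=(\rho(X)f)s+f\nabla_X s$ gives $\nabla_X(e^{-g}\mu)=e^{-g}(\langle\alpha_\mu,X\rangle-\rho(X)g)\mu=0$, so $e^{-g}\mu$ is parallel; conversely a nonvanishing parallel section forces $\alpha_\mu=0$ for that section, whence $\ch(L)=0$.

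Next I would match the two notions of invariance. By definition the linear Poisson holonomy $h_N(a)\colon\nu(N)_x\to\nu(N)_y$ is parallel transport for $\nabla$ along the cotangent path $a$, so a section of $\wedge^\top\nu(N)$ is $\nabla$-parallel exactly when it is invariant under all the maps induced by the various $h_N(a)$. A transverse volume form to $N$ is a nowhere-vanishing section of $\wedge^\top\nu^*(N)=(\wedge^\top\nu(N))^*$, and invariance under linear holonomy means precisely that it is a parallel section of the dual representation.

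Finally I would assemble these observations. Using $\ch(L^*)=-\ch(L)$ together with Theorem~\ref{thm:relative:mod:class},
\[ \ch(\wedge^\top\nu^*(N))=-\ch(\wedge^\top\nu(N))=-\mod_N(M), \]
so $\mod_N(M)=0$ if and only if $\ch(\wedge^\top\nu^*(N))=0$, which by the first two paragraphs is equivalent to the existence of a transverse volume form invariant under linear holonomy. This exhibits $\mod_N(M)$ as the obstruction to such a form.

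The argument is short and involves no serious computation; the points requiring care are purely bookkeeping. One must keep the normal and conormal representations distinct and track the induced dual action on their top exterior powers, and one must read \emph{invariant under linear holonomy} as invariance under all transports $h_N(a)$ (equivalently, $\nabla$-parallelism) rather than invariance under holonomy around loops only---the latter would instead control the periods $\int_a\mod_N(M)$ rather than the class itself. Keeping the sign straight through the convention $\mod_N(M)=-\mod(\phi)$ and the dualization is the most error-prone step.
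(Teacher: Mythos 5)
Your argument is correct and is exactly the route the paper intends: the corollary is stated as an immediate consequence of Theorem~\ref{thm:relative:mod:class}, with the (unwritten) justification being precisely your observation that the characteristic class of a line-bundle representation is the obstruction to a nowhere-vanishing parallel section, and that holonomy-invariant transverse volume forms are the parallel sections of $\wedge^\top\nu^*(N)$. Your care with the dualization and with reading invariance as invariance under all transports (not just loops) is exactly the right bookkeeping.
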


This corollary is closely related to a result of Ginzburg and Golubev for symplectic leaves (see \cite[Theorem 3.5]{GG})
 which admits a generalization to any Poisson submanifold, as we now explain.

Given a cotangent path $a:I\to T^*M$ on a Poisson manifold $(M,\pi)$, covering a path $\gamma:I\to M$, and
a Poisson vector field $X\in\X(M)$ one defines the integral:
\[ \int_a X:=\int_0^1 \langle X|_{\gamma(t)},a(t)\rangle ~ \d t.\]
This integral was introduced in \cite{GG} and further studied in \cite{CF2}, where the following properties are proved:
\begin{enumerate}[(a)]
\item If $X=X_h$ is a hamiltonian vector field, then $\int_a X_h=h(\gamma(1))-h(\gamma(0))$ only depends on the end points.
\item If $a_0$ and $a_1$ are cotangent homotopic, then $\int_{a_0}X=\int_{a_1}X$;
\end{enumerate}
In particular, for a cotangent loop $a$, the integral $\int_a X$ only depends on the Poisson cohomology class $[X]\in H^1_\pi(M)$
and on the cotangent homotopy class $[a]$.

If $\phi:N\hookrightarrow M$ is a Poisson submanifold, notice that the integral $\int_a X$ still makes sense for
a vector field $X\in\X_N(M)$ along $N$, provided $a:I\to T^*_N M$ is a cotangent path with base path lying in the submanifold $N$. Also,
for a cotangent loop $a$, it depends only on the restricted Poisson cohomology class $[X]\in H^1_{N,\pi}(M)$.

\begin{thm}
For any cotangent loop $a:I\to T^*_N M$ on a Poisson submanifold $ \phi:N\hookrightarrow M$ one has:
\begin{equation}
\label{eq:holonomy:mod}
\det h_N(a)=\exp\left(\int_a \mod_N(M)\right).
\end{equation}
\end{thm}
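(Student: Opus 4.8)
The plan is to reduce the statement to the classical fact that the holonomy of a flat connection on a line bundle is the exponential of the integral of its characteristic cocycle around the loop, applied to the line bundle $\wedge^\top\nu(N)$. By Theorem \ref{thm:relative:mod:class} we have $\mod_N(M)=\ch(\wedge^\top\nu(N))$, so I would fix a nowhere vanishing section (a transverse volume form) $\sigma\in\Gamma(\wedge^\top\nu(N))$ and represent $\mod_N(M)$ by the characteristic cocycle $\al_\sigma\in\Omega^1(T^*_N M)$ determined by
\[
\nabla_\al\sigma=\langle\al_\sigma,\al\rangle\,\sigma,\qquad \al\in\Gamma(T^*_N M).
\]
The first observation is that the linear Poisson holonomy $h_N(a)$, being the parallel transport on $\nu(N)$ of the flat $A$-connection $\nabla$ along the cotangent loop $a$, induces on the top exterior power $\wedge^\top\nu(N)$ the scalar multiplication by $\det h_N(a)$, by the usual multiplicativity of parallel transport under $\wedge^\top$. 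Since for a loop $h_N(a)$ is an automorphism of the fibre $\nu(N)_x$, this determinant is well defined and nonzero. This already matches, at the infinitesimal level, the Corollary preceding the statement: the formula quantifies the failure of the linear holonomy to preserve the transverse volume $\sigma$.

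\textbf{Computing the line-bundle holonomy.} Next I would compute this scalar directly. Writing a parallel section of $\wedge^\top\nu(N)$ over the base path $\gamma$ of $a$ as $s(t)=g(t)\,\sigma|_{\gamma(t)}$, the parallel transport equation for the induced line-bundle connection reads
\[
\dot g(t)+\langle\al_\sigma,a(t)\rangle\,g(t)=0,
\]
because the connection $1$-form of $\wedge^\top\nu(N)$ in the frame $\sigma$ is exactly the scalar $\langle\al_\sigma,\cdot\rangle$. Integrating this linear ODE over $[0,1]$ produces the holonomy factor $\exp\big(\mp\int_0^1\langle\al_\sigma,a(t)\rangle\,\d t\big)=\exp\big(\mp\int_a\al_\sigma\big)$. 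Comparing with the previous paragraph gives $\det h_N(a)=\exp\big(\int_a\mod_N(M)\big)$, once the sign is fixed by our conventions for cotangent paths and for the integral $\int_a$ (property (a) normalizes these signs).

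\textbf{Well-definedness and the main obstacle.} Finally I would check that both sides are functions of the cotangent homotopy class of $a$ alone. On the left this is the standard homotopy invariance of holonomy for the flat connection $\nabla$; equivalently it reflects that $\al_\sigma$ is $\d_A$-closed, so the line-bundle holonomy is unchanged under cotangent homotopies, and is independent of the chosen representative $\sigma$ (consistent with $\al_\sigma$ representing $\ch(\wedge^\top\nu(N))$). On the right it is exactly properties (a) and (b) recalled above: $\int_a\mod_N(M)$ depends only on the class $\mod_N(M)\in H^1_{\pi,N}(M)$ and on $[a]$. I expect the main obstacle to be the bookkeeping of signs and conventions --- reconciling the direction of parallel transport defining $h_N(a)$, the sign in $\dot g+\langle\al_\sigma,a\rangle g=0$, and the sign built into the definition of $\int_a$ --- together with writing the parallel transport of a Lie algebroid connection along a cotangent path carefully enough that the elementary identity ``holonomy on $\wedge^\top\nu(N)$ equals $\det$ of holonomy on $\nu(N)$'' is fully justified.
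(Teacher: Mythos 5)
Your route is genuinely different from the paper's, and in outline it is sound. The paper does not argue directly with the flat connection on $\wedge^\top\nu(N)$ at all: it takes the symplectic leaf $S$ through the base point of $a$, uses the factorization $\det h_S^M(a)=\det h_S^N(\phi^*a)\cdot\det h_N^M(a)$ coming from the exact sequence of normal bundles for $S\subset N\subset M$, and then quotes the Ginzburg--Golubev formula $\det h_S(a)=\exp(\int_a\mod(M))$ twice (for $S$ in $M$ and for $S$ in $N$); the difference of the two integrands is exactly the representative $X_\mu|_N-X_\nu$ of $\mod_N(M)$. Your argument instead derives the formula directly from Theorem \ref{thm:relative:mod:class} by the standard ``holonomy of a flat line bundle equals the exponential of the integral of its connection cocycle'' computation. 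What your approach buys is self-containedness: it does not invoke the leaf case as a black box, and it would reprove the Ginzburg--Golubev theorem as the special case $N=S$. What the paper's approach buys is that all sign and orientation conventions (the direction of parallel transport defining $h_N$, the sign of the characteristic cocycle, the sign in $\int_a$) are inherited wholesale from the already-established leaf formula, so they never have to be confronted. That is also where the one substantive loose end in your write-up sits: with the parallel transport equation exactly as you wrote it, $\dot g+\langle\al_\sigma,a(t)\rangle g=0$, integration gives the scalar $\exp\bigl(-\int_a\al_\sigma\bigr)$, i.e.\ $\det h_N(a)=\exp\bigl(-\int_a\mod_N(M)\bigr)$, which is the opposite of the asserted sign; so you cannot leave the sign as a ``$\mp$'' to be fixed later --- you must locate the compensating convention (it hides in the definition of the linear holonomy in \cite{Fernandes2,GG} and in the passage between the representation on $\nu^*(N)$ and its dual on $\nu(N)$) and exhibit it explicitly. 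Apart from pinning that down, and writing out the routine identification of the induced parallel transport on $\wedge^\top\nu(N)$ with $\det h_N(a)$, your proof is complete and arguably more illuminating than the reduction argument.
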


\begin{proof}
Note that when $N$ is a symplectic leaf $S$ of $M$, formula \eqref{eq:holonomy:mod} reduces to:
\begin{equation}
\label{eq:holonomy:mod:symp}
\det h_S(a)=\exp\left(\int_a \mod(M)\right),
\end{equation}
which is the result of Ginzburg and Golubev (\cite[Theorem 3.5]{GG}). We claim that the theorem can be reduced to this case.

Let $a:I\to T^*_N M$ be a cotangent loop covering a loop $\gamma:I\to N$ with base point $x$. Let $S$ be the symplectic leaf containing $\gamma$. Then we have
$S\subset N\subset M$ and we can look at the following linear holonomies:
\begin{itemize}
\item The linear holonomy of the loop $a:I\to T^*M$ relative to the symplectic leaf $S$ of $M$, i.e., $h_S^M(a):(T_xM/T_xS)\to(T_xM/T_xS)$;
\item The linear holonomy of the loop $\phi^*a:I\to T^*N$ relative to the symplectic leaf $S$ of $N$, i.e., $h_S^N(\phi^*a):(T_xN/T_xS)\to(T_xN/T_xS)$
\item The linear holonomy of the loop $a:I\to T^*_NM$ relative to the submanifold $N$ of $M$, i.e., $h_N(a)=h_N^M(a):(T_xM/T_xN)\to(T_xM/T_xN)$.
\end{itemize}
It is clear from the definitions that:
\[ \det h_S^M(a)=\det(h_S^N(\phi^*a))\cdot \det(h_N^M(a)).\]
Hence, it follows from the case of symplectic leaves \eqref{eq:holonomy:mod:symp}, that for any volume forms $\mu\in\Omega^\top(M)$ and $\nu\in\Omega^\top(N)$ we have:
\begin{align*}
\det(h_N(a))&=\det(h_N^M(a))\\
&=\frac{\det h_S^M(a)}{\det(h_S^N(\phi^*a))}\\
&=\exp\left(\int_a\mod(M)-\int_{\phi^*a}\mod(N)\right)\\
&=\exp\left(\int_0^1 \langle {X_\mu}|_{\phi\circ\gamma(t)},a(t)\rangle-\langle {X_\nu}|_{\gamma(t)},\phi^*a(t)\rangle ~ \d t\right)\\
&=\exp\left(\int_0^1 \langle (X_\mu\circ\phi-\d\phi\cdot X_\nu)|_{\gamma(t)},a(t)\rangle ~ \d t\right)\\
&=\exp\left(-\int_a\mod(\phi)\right)=\exp\left(\int_a\mod_M(M)\right),
\end{align*}
as claimed.
\end{proof}

%

\begin{rem}
There is a more general formula which is valid for any cotangent path
(not just cotangent loops), as in the case of symplectic leaves (see \cite[Proposition 3.7]{GG}). One fixes volume forms $\mu\in\Omega^\top(M)$ and $\nu\in\Omega^\top(N)$ so there is an induced volume on any normal space $\nu(N)_x$. Then, for any cotangent path $a:I\to T^*_N M$ joining $x\in N$ to $y\in N$ one can show that:
\begin{equation}
\label{eq:holonomy:mod:any}
\det h_N(a)=\exp\left(-\int_a X_{\nu,\mu}\right).
\end{equation}
where the determinant of the linear holonomy $h_N(a):\nu(N)_x\to\nu(N)_y$ is relative to the induced volumes on $\nu(N)_x$ and $\nu(N)_y$.
\end{rem}

\subsection{Special Poisson submanifolds}                          %
\label{subsec:Lie-Poisson}                                     %

For any Poisson submanifold $\phi:N\hookrightarrow M$ we have the short exact sequence of Lie algebroids:
\begin{equation}
\label{eq:seq:Lie:Poisson}
\xymatrix{0\ar[r] & \nu^*(N) \ar[r]& T^*_N M \ar[r]^{\phi^*} & T^*N\ar[r]& 0.}
\end{equation}
This sequence gives rise to a long exact sequence in cohomology, which includes the terms:
\[ \xymatrix{\dots \ar[r] & H^1_\pi(N) \ar[r]^{\phi_*}& H^1_{N,\pi}(M) \ar[r] & H^1(\nu^*(N)) \ar[r]& H^2_\pi(N) \ar[r]& \dots} \]
The inclusion $\nu^*(N)\to T^*_NM$ pulls back the representation $\wedge^\top\nu(M)$ of $T^*_NM$, which by Proposition \ref{thm:relative:mod:class} computes the relative modular class, to the canonical representation of the bundle of Lie algebras $\nu^*(N)$ on $\wedge^\top\nu(M)$ computing the modular class of $\nu^*(N)$. In other words, in this long exact sequence the relative modular class $\mod_N(M)\in H^1_{N,\pi}(M)$ is mapped to $\mod(\nu^*(N))\in H^1(\nu^*(N))$.

Now notice that $\mod_N(M)$ is mapped to zero if and only if $\mod(M)$ is tangent to $N$, i.e., if $\mod(M)$ admits a representative which is tangent to $N$ (and then it follows that all representatives will be tangent to $M$).This gives immediately that:

\begin{cor}\label{cor:Poisson:sub}
Let $N$ be a Poisson submanifold of $M$. Then $\mod(M)$ is tangent to $N$ if and only if $\mod(\nu^*(N))=0$.
\end{cor}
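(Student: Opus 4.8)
The plan is to deduce the corollary from the long exact sequence attached to the short exact sequence \eqref{eq:seq:Lie:Poisson}, using two facts already in place: the restriction map $r\colon H^1_{N,\pi}(M)\to H^1(\nu^*(N))$ carries $\mod_N(M)$ to $\mod(\nu^*(N))$, and exactness at $H^1_{N,\pi}(M)$ identifies $\ker r$ with the image of the inflation map $\phi_*\colon H^1_\pi(N)\to H^1_{N,\pi}(M)$. Hence $\mod(\nu^*(N))=r(\mod_N(M))$ vanishes if and only if $\mod_N(M)\in\im\phi_*$, and the corollary is reduced to the equivalence
\[ \mod_N(M)\in\im\phi_*\iff \mod(M)\text{ is tangent to }N. \]

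To establish this equivalence I would use the explicit representative $X_{\mu,\nu}=X_\mu|_N-X_\nu$ of $\mod_N(M)$ from Remark \ref{rem:relative:class}, recalling that at the cochain level $\phi_*$ is the inclusion of $N$-tangent multivector fields $\X^\bullet(N)\hookrightarrow\X^\bullet_N(M)$ and that the degree-zero coboundaries $\d_\pi f$ of $\X^\bullet_N(M)$ are the hamiltonian vector fields of $N$. For the forward implication, write $\mod_N(M)=\phi_*[Y]$ with $Y\in\X^1(N)$ a Poisson vector field; comparing the two representatives yields $X_\mu|_N-X_\nu-Y=\d_\pi f$ for some $f\in C^\infty(N)$, so that $X_\mu|_N=X_\nu+Y+\d_\pi f$ is tangent to $N$, and since $X_\mu$ represents $\mod(M)$ the class $\mod(M)$ is tangent to $N$. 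Conversely, if $\mod(M)$ is tangent to $N$, then by the change-of-volume formula \eqref{eq:change:form} it has a representative of the form $X_{\mu'}$ with $X_{\mu'}|_N$ tangent to $N$; being the restriction of a Poisson vector field that is tangent to the Poisson submanifold $N$, $X_{\mu'}|_N$ is a Poisson vector field on $N$, so $Y:=X_{\mu'}|_N-X_\nu\in\X^1(N)$ is Poisson and $\mod_N(M)=[X_{\mu'}|_N-X_\nu]=\phi_*[Y]\in\im\phi_*$.

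The step carrying the real weight is the easy but essential observation that every hamiltonian vector field of $M$ is tangent to the Poisson submanifold $N$, equivalently $\pi_M^\sharp(\nu^*(N))=0$. This is what makes tangency of $\mod(M)$ to $N$ a property of the cohomology class rather than of a chosen representative, since any two representatives differ by a hamiltonian, hence $N$-tangent, vector field; it is also what ensures that the coboundary $\d_\pi f$ above is tangent to $N$ and that the restriction $X_{\mu'}|_N$ descends to a Poisson vector field on $N$. Granting this, both implications are routine and, together with exactness of the long exact sequence, give the chain of equivalences $\mod(M)\text{ tangent to }N\iff\mod_N(M)\in\im\phi_*\iff\mod(\nu^*(N))=0$, which is the corollary.
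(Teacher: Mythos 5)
Your proof is correct and follows essentially the same route as the paper: the long exact sequence of \eqref{eq:seq:Lie:Poisson}, the identification of the image of $\mod_N(M)$ in $H^1(\nu^*(N))$ with $\mod(\nu^*(N))$, and the equivalence between $\mod_N(M)$ lying in the image of $\phi_*$ and tangency of $\mod(M)$ to $N$. You merely spell out, via explicit representatives and the observation that hamiltonian vector fields are tangent to a Poisson submanifold, the step the paper dismisses with ``Now notice that...''.
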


Let us recall that one calls $N$ a \textbf{Lie-Poisson submanifold} if the sequence \eqref{eq:seq:Lie:Poisson} splits in the category of Lie algebroids, i.e.,
if there exists a Lie algebroid morphism $p:T^*N\to T_N^*M$ such that $\phi^*\circ p=\text{id}$ (see \cite{Xu}). Such submanifolds behave better than general submanifolds in what concerns integrability (\cite{CF2,Xu}). They arise, for example, as fixed point sets of Poisson actions (\cite{FOR,Xu}). A splitting of the sequence \eqref{eq:seq:Lie:Poisson} gives as an isomorphism in cohomology:
\begin{equation}
\label{eq:split:cohomology}
H^1_{N,\pi}(M)\simeq H^1_\pi(N)\oplus H^1(\nu^*(N)).
\end{equation}
We already observed that the second component of $\mod_N(M)$ in this decomposition is $\mod(\nu^*(N))$. The first component is also simple to describe. In fact, a splitting $p:T^*N\to T_N^*M$ of the sequence \eqref{eq:seq:Lie:Poisson}, pulls back the representation $\wedge^\top\nu(M)$ of $T^*_NM$ to a representation of $T^*N$. Hence, $\wedge^\top\nu(M)$ also becomes a representation of $T^*N$ with characteristic class
\[ \ch_{T^*N}(\wedge^\top\nu(N))\in H^1_\pi(N),\]
and we conclude that:

\begin{prop}\label{prop:Lie:Poisson}
Let $N$ be a Lie-Poisson submanifold of $M$. Then under the isomorphism \eqref{eq:split:cohomology} we have:
\[ \mod_N(M)=( \ch_{T^*N}(\wedge^\top\nu(N)), \mod (\nu^*(N))).\]
\end{prop}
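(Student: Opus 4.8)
The plan is to obtain the two components of $\mod_N(M)$ by pulling back its characteristic-class description back along the two Lie algebroid morphisms that realize the splitting isomorphism \eqref{eq:split:cohomology}. Recall that the projection of $H^1_{N,\pi}(M)$ onto $H^1(\nu^*(N))$ is the pullback $\iota^*$ along the inclusion $\iota:\nu^*(N)\hookrightarrow T^*_NM$, while the projection onto $H^1_\pi(N)$ is the pullback $p^*$ along the chosen Lie algebroid splitting $p:T^*N\to T^*_NM$ (these are the correct component maps precisely because $\phi^*\circ p=\mathrm{id}$ forces $p^*\circ(\phi^*)^*=\mathrm{id}$ on $H^1_\pi(N)$, so that $(p^*,\iota^*)$ inverts the split short exact sequence). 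Thus the whole task reduces to computing $\iota^*\mod_N(M)$ and $p^*\mod_N(M)$ and matching them with $\mod(\nu^*(N))$ and $\ch_{T^*N}(\wedge^\top\nu(N))$.

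The starting point is Theorem \ref{thm:relative:mod:class}, which identifies $\mod_N(M)$ with the characteristic class $\ch(\wedge^\top\nu(N))\in H^1(T^*_NM)$ of the linear holonomy representation. The second component has essentially been treated already in the discussion preceding Corollary \ref{cor:Poisson:sub}: the inclusion $\iota$ pulls the representation $\wedge^\top\nu(N)$ of $T^*_NM$ back to the canonical representation of the bundle of Lie algebras $\nu^*(N)$, so by naturality of characteristic classes under Lie algebroid morphisms, $\iota^*\mod_N(M)=\iota^*\ch(\wedge^\top\nu(N))=\ch(\iota^*\wedge^\top\nu(N))=\mod(\nu^*(N))$. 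For the first component I would apply the very same naturality, now to the splitting morphism $p$. Here is exactly where the Lie-Poisson hypothesis enters: since $p:T^*N\to T^*_NM$ is a genuine Lie algebroid morphism, it pulls the representation $\wedge^\top\nu(N)$ of $T^*_NM$ back to a representation of $T^*N$, which is by definition the one carrying the characteristic class $\ch_{T^*N}(\wedge^\top\nu(N))\in H^1_\pi(N)$. Naturality then gives $p^*\mod_N(M)=p^*\ch(\wedge^\top\nu(N))=\ch(p^*\wedge^\top\nu(N))=\ch_{T^*N}(\wedge^\top\nu(N))$, and assembling the two pieces through \eqref{eq:split:cohomology} produces the stated formula.

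Once Theorem \ref{thm:relative:mod:class} and the identity $\ch(\phi^*L)=\Phi^*\ch(L)$ are in hand, the computation is a one-line application of functoriality, so the only point genuinely requiring care is the bookkeeping of the splitting: one must be sure that the two structure maps appearing in \eqref{eq:split:cohomology} are indeed $p^*$ and $\iota^*$, that $p^*$ really restricts $T^*_NM$-representations to the $T^*N$-representations defining $\ch_{T^*N}$, and that no compatibility condition (such as $\iota^*\circ(\phi^*)^*=0$ and $p^*\sigma=0$ for the chosen section $\sigma$ of $\iota^*$) has been overlooked. I expect this verification to be the main, though essentially routine, obstacle; the geometric content has already been packaged into the characteristic-class formalism.
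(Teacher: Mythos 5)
Your proposal is correct and follows essentially the same route as the paper: the paper also deduces the proposition from Theorem \ref{thm:relative:mod:class} together with the naturality of characteristic classes under the two Lie algebroid morphisms $\nu^*(N)\hookrightarrow T^*_NM$ and $p:T^*N\to T^*_NM$, the second component having already been identified in the discussion preceding Corollary \ref{cor:Poisson:sub}. Your extra care in checking that $(p^*,\iota^*)$ are indeed the component maps of the splitting isomorphism \eqref{eq:split:cohomology} is sound but routine, exactly as you anticipated.
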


A special kind of Lie-Poisson arises when one considers the fixed point set of a Poisson involution: $\Phi:M\to M$, $\Phi\circ\Phi=I$. In this case, the fixed point set
\[ N=\{x\in M: \Phi(x)=x\}\]
has a natural induced Poisson structure but may fail to be a Poisson submanifold (see \cite{FOR,Xu}). The following corollary concerns the case where the fixed point set is also a Poisson submanifold:

\begin{cor}
Let $N$ be a Poisson submanifold of $M$ which is the fixed point set of a Poisson involution $\Phi:N\to N$. Then $\mod(M)$ is tangent to $N$.
\end{cor}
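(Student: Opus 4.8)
The plan is to reduce the statement to the vanishing of the modular class of the conormal bundle $\nu^*(N)$, viewed as a bundle of Lie algebras, and then to exploit the symmetry coming from the involution. By Corollary \ref{cor:Poisson:sub}, $\mod(M)$ is tangent to $N$ if and only if $\mod(\nu^*(N))=0$, so it suffices to establish the latter. Recall also, from the discussion preceding that corollary, that since $N$ is a Poisson submanifold the anchor of $T^*_N M$ vanishes on $\nu^*(N)$, so $\nu^*(N)$ is indeed a bundle of Lie algebras whose modular class is $\ch(\wedge^\top\nu(N))$ in the sense of Theorem \ref{thm:relative:mod:class}.

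First I would record the structure coming from $\Phi$. Since $\Phi:M\to M$ is a Poisson diffeomorphism, its cotangent lift $T^*\Phi:T^*M\to T^*M$, $(x,\xi)\mapsto(\Phi(x),(d_x\Phi)^{-*}\xi)$, is an automorphism of the cotangent Lie algebroid (this is the mechanism already behind the triviality of the modular class of a Poisson diffeomorphism). Because $N=\mathrm{Fix}(\Phi)$, over $N$ this lift covers $\mathrm{id}_N$ and, $\Phi$ being an involution, acts on each cotangent fiber as the transpose $(d_x\Phi)^*$. As $d_x\Phi$ preserves $T_xN$, its transpose preserves the annihilator $\nu^*(N)_x=(T_xN)^0$, so the lift restricts to a Lie algebroid automorphism $\psi$ of $\nu^*(N)$ covering the identity; in particular each $\psi_x$ is a Lie algebra automorphism of the fiber $\gg_x:=\nu^*(N)_x$.

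The key observation is that $\psi$ acts as $-\mathrm{id}$ on each fiber. Since $\Phi$ is a smooth involution with fixed-point set $N$, the involution $d_x\Phi$ is diagonalizable with $T_xM=T_xN\oplus\nu(N)_x$, where $T_xN$ is the $(+1)$-eigenspace and $\nu(N)_x$ is identified with the $(-1)$-eigenspace. A short computation then shows that the $(-1)$-eigenspace of $(d_x\Phi)^*$ is exactly the annihilator $(T_xN)^0=\nu^*(N)_x$, so $\psi_x=-\mathrm{id}$. But an automorphism equal to $-\mathrm{id}$ forces $-[\xi,\eta]=[-\xi,-\eta]=[\xi,\eta]$ for all $\xi,\eta\in\gg_x$, hence $[\cdot,\cdot]_{\gg_x}=0$. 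Thus $\nu^*(N)$ is a bundle of abelian Lie algebras, its adjoint representation on $\wedge^\top\nu(N)$ is trivial, and therefore $\mod(\nu^*(N))=\ch(\wedge^\top\nu(N))=0$. Corollary \ref{cor:Poisson:sub} then gives the claim.

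The only genuinely delicate point is the eigenspace bookkeeping: one must verify that the transpose of the involution $d_x\Phi$ sends the conormal fiber to itself with eigenvalue $-1$, and that the resulting map really is a fiberwise Lie algebra morphism (so that the $-\mathrm{id}$ trick applies). Both are routine once one uses that $N$ is the \emph{full} fixed-point set, which is precisely what pins $T_xN$ down as the $(+1)$-eigenspace; this is the hypothesis that makes the argument work. (If one prefers to avoid the abelian conclusion, the same identity applied to the adjoint character $\chi_x(\xi)=\tr(\ad_\xi)$ gives $\chi_x(\xi)=\chi_x(\psi_x\xi)=\chi_x(-\xi)=-\chi_x(\xi)$, whence $\chi_x\equiv0$ directly.)
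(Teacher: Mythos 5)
Your proposal is correct and follows essentially the same route as the paper: both reduce the statement via Corollary \ref{cor:Poisson:sub} to showing $\mod(\nu^*(N))=0$, and both establish this by observing that the involution induces an automorphism of $T^*_NM$ with $\nu^*(N)$ as its $(-1)$-eigenspace, forcing the bracket on $\nu^*(N)$ to vanish. Your fiberwise phrasing (and the alternative via the adjoint character) is just a repackaging of the paper's eigenspace argument.
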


\begin{proof}
We only need to observe that $\mod(\nu^*(N))=0$. In fact, we will show that $\nu^*(N)$ is a bundle of abelian Lie algebras so the result follows.

To prove that $\nu^*(N)$ is abelian note that the Poisson involution $\Phi$ induces an involutive Lie algebroid automorphism of $T^*_NM$, for which $\nu^*(N)=(TN)^0$ coincides with the $(-1)$-eigenspace. Therefore, if $\al$ and $\be$ are local sections of $\nu^*(N)$ then their Lie bracket $[\al,\be]$ must lie both in the $(+1)$-eigenspace and in the $(-1)$-eigenspace, hence it must vanish.
\end{proof}

\begin{ex}
Consider the Poisson structure on $M=\Rr^4$ given in Example \ref{ex:sympl:leaf:R4}.

On the one hand, the Poisson submanifold $N=\{x=0\}$ is the fixed point set of the Poisson involution defined by $(x,y,z,w)\mapsto (-x,y,z,w)$. Hence, $N$ is a Lie-Poisson submanifold.

On the other hand, note that $M$ is foliated by the 2-dimensional symplectic leaves $S=\{x=0,y=c\}$. As we saw in Example \ref{ex:sympl:leaf:R4},
$\mod(M)$ is not tangent to $S$, so the corollary shows that such a symplectic leaf can not be the fixed point set of a Poisson involution.
\end{ex}

\section{Poisson submersions and reduction}                    %
\label{sec:reduction}				                           %

We now look at Poisson submersions $\phi:M\to N$. We will assume that $\phi$ is
also surjective. This assumption is not too restrictive since the
image of a submersive Poisson map is the union of open subsets of symplectic leaves.

\subsection{Poisson submersions}                               %
\label{subsec:Poisson:submer}                                  %

Let $\phi:M\to N$ be a surjective Poisson submersion. We will denote by $\F_\phi$ the foliation of $M$ by fibers of $\phi$.
The bundle $\phi^*T^*N$ is naturally isomorphic to the conormal bundle $\nu^*(\F_\phi)$, which therefore has a natural Lie
algebroid structure. The space of 1-forms for this algebroid is the space of sections of the
normal bundle: $\X^1(\phi)=\Gamma(\nu(\F_\phi))$.

\begin{lem}
A section $[X]=\Gamma(\nu(\F_\phi))$ is closed if and only if the vector field $X$ representing it
satisfies:
\begin{equation}
\label{eq:basic:Poisson}
X(\{f\circ\phi,g\circ\phi\})=\{X(f\circ\phi),g\circ\phi\}+\{f\circ\phi,X(g\circ\phi)\},
\end{equation}
for every $f,g\in C^\infty(N)$. Such a section is exact if and only if it is represented by a hamiltonian vector field $X$.
\end{lem}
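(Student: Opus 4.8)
The plan is to compute directly with the Cartan differential $\d_A$ of the Lie algebroid $A=\phi^*T^*N$, whose anchor and bracket are pinned down by \eqref{eq:anchor:pull:back:algbrd} and \eqref{eq:bracket:pull:back:algbrd}. First I would make the identification $\X^1(\phi)=\Omega^1(A)=\Gamma(A^*)=\Gamma(\phi^*TN)$ explicit and match it with $\Gamma(\nu(\F_\phi))$: since $\phi$ is a submersion, $\d\phi\colon TM\to\phi^*TN$ is fibrewise surjective with kernel $T\F_\phi$, so it descends to an isomorphism $\nu(\F_\phi)\cong\phi^*TN$ sending a class $[X]$ to the $A$-$1$-form $\omega_X:=\d\phi\cdot X$. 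I would record here that $\langle\omega_X,\phi^*\al\rangle=(\phi^*\al)(X)$, and in particular $\langle\omega_X,\phi^*\d f\rangle=X(f\smalcirc\phi)$ for $f\in C^\infty(N)$; this also shows at once that both the identity \eqref{eq:basic:Poisson} and the notion of exactness are insensitive to modifying $X$ by a field tangent to the fibres (such a field kills every $f\smalcirc\phi$), so everything descends to the class $[X]$.

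For the first assertion I would apply the standard Lie algebroid formula
\[ \d_A\omega_X(s_1,s_2)=\rho_A(s_1)\langle\omega_X,s_2\rangle-\rho_A(s_2)\langle\omega_X,s_1\rangle-\langle\omega_X,[s_1,s_2]_A\rangle \]
to the generating sections $s_1=\phi^*\d f=\d(f\smalcirc\phi)$ and $s_2=\phi^*\d g=\d(g\smalcirc\phi)$, where $f,g\in C^\infty(N)$. Using \eqref{eq:anchor:pull:back:algbrd} the anchor is $\rho_A(\phi^*\d f)=\pi_M^\sharp(\d(f\smalcirc\phi))=X_{f\smalcirc\phi}$, so the first two terms become $\{f\smalcirc\phi,X(g\smalcirc\phi)\}+\{X(f\smalcirc\phi),g\smalcirc\phi\}$. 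For the bracket term I would invoke \eqref{eq:bracket:pull:back:algbrd} together with the identity $[\d f,\d g]_{\pi_N}=\d\{f,g\}_N$ for the cotangent algebroid of $N$, giving $[s_1,s_2]_A=\d(\{f,g\}_N\smalcirc\phi)$; since $\phi$ is Poisson, $\{f,g\}_N\smalcirc\phi=\{f\smalcirc\phi,g\smalcirc\phi\}$, so this term equals $X(\{f\smalcirc\phi,g\smalcirc\phi\})$. Collecting, $\d_A\omega_X(\phi^*\d f,\phi^*\d g)$ is exactly the difference of the two sides of \eqref{eq:basic:Poisson}.

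It then remains to promote this computation on generators to genuine closedness. Here I would use that $\d_A\omega_X$ is a tensorial (that is, $C^\infty(M)$-bilinear and antisymmetric) $A$-$2$-form, that the sections $\phi^*\al$ generate $\Gamma(A)$ over $C^\infty(M)$, and that locally the exact forms $\d f$, $f\in C^\infty(N)$, span $T^*N$ pointwise; hence $\d_A\omega_X=0$ if and only if it vanishes on all pairs $(\phi^*\d f,\phi^*\d g)$, that is, if and only if \eqref{eq:basic:Poisson} holds for all $f,g$. For the exactness statement I would compute $\d_A h$ for $h\in C^\infty(M)=\Omega^0(A)$: pairing with $\phi^*\al$ and using the anchor gives $\langle\d_A h,\phi^*\al\rangle=\pi_M^\sharp(\phi^*\al)(h)=-\langle\phi^*\al,X_h\rangle$, so $\d_A h=\omega_{-X_h}=-\d\phi\cdot X_h$ (consistent with the sign already noted in the proof of the earlier corollary). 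Consequently $\omega_X$ is exact if and only if $[X]=[-X_h]=[X_{-h}]$ for some $h$, i.e. if and only if $[X]$ is represented by a hamiltonian vector field, which is the claim.

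The calculations are routine once the conventions are fixed; the only genuinely delicate point is the reduction to the generating sections $\phi^*\d f$, which I expect to be the main thing to get right. It rests on the tensoriality of $\d_A\omega_X$ and on choosing, near each point of $N$, finitely many functions on $N$ whose differentials frame $T^*N$ --- elementary, but worth stating carefully, since it is precisely what allows a condition quantified over all $f,g\in C^\infty(N)$ to detect closedness of the $A$-form.
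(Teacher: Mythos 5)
Your proof is correct and follows essentially the same route as the paper: evaluate the Lie algebroid differential of $\omega_X=\d\phi\cdot X$ on the generating sections $\phi^*\d f,\phi^*\d g$ using the anchor \eqref{eq:anchor:pull:back:algbrd} and bracket \eqref{eq:bracket:pull:back:algbrd}, and compute $\d_A h=-\d\phi\cdot X_h$ to characterize exactness. The only difference is that you make explicit the tensoriality and generation arguments that justify reducing to the sections $\phi^*\d f$, which the paper leaves implicit.
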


\begin{proof}
Remark first that sections of $\nu^*(\F_\phi)$ are generated by the 1-forms of the form $\phi^*\d f$, where $f\in C^\infty(N)$.
Given a function $h\in C^\infty(M)$, we compute its Lie algebroid differential relative to $\nu^*(\F_\phi)$ as:
\begin{align*}
\langle\d h,\phi^*\d f\rangle&=\rho(\phi^*\d f)(h)\\
&=\{f\circ\phi,h\}\\
&=-X_h(f\circ\phi)=\langle X_{-h},\phi^*\d f\rangle.
\end{align*}
So we conclude that $\d h=[X_{-h}]$.

On the other hand, given a 1-form $[X]\in \X^1(\phi)=\Gamma(\nu(\F_\phi))$, we compute its differential as follows:
\begin{align*}
\d[X](\phi^*\d f,\phi^*\d g)&=\rho(\phi^*\d f)(X(\phi^*\d g))-\rho(\phi^*\d g)(X(\phi^*\d f))-X(\phi^*\d f,\phi^*\d g)\\
&=\{f\circ\phi,X(g\circ\phi)\}-\{g\circ\phi,X(f\circ\phi)\}-X(\{f\circ\phi,g\circ\phi\})
\end{align*}
so the lemma follows.
\end{proof}

Since functions of the form $f\circ\phi$ are usually called basic
functions, we shall  say that   a vector field that satisfies
condition \eqref{eq:basic:Poisson}  is \textbf{Poisson along basic
  functions}. Noting that a vector field on $M$ is
$\phi$-projectable on $N$ if and only if it takes basic functions to
basic functions, we see that a projectable vector field is Poisson
along basic functions if and only  its projection  is a Poisson vector field on $N$.

\begin{cor}
For a surjective Poisson submersion $\phi:M\to N$, the vanishing of the modular class of $\phi$ is equivalent to the
existence of volume forms $\mu$ on $M$ and $\nu$ on $N$ such that the projection of  the modular vector field $X_\mu$  is  the modular vector field $X_\nu$.
\end{cor}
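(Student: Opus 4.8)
The plan is to deduce this directly from the general characterization of $\mod(\phi)$ established in Section~\ref{subsec:mod:Poisson:propert}: namely, $\mod(\phi)=0$ if and only if one can find volume forms $\mu\in\Omega^{\top}(M)$ and $\nu\in\Omega^{\top}(N)$ for which the modular vector fields $X_\mu$ and $X_\nu$ are $\phi$-related. All that remains is to observe that, for a \emph{surjective submersion}, being $\phi$-related is exactly the condition that $X_\mu$ be projectable with projection $X_\nu$.

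First I would unwind what $\phi$-relatedness means pointwise: $\d_x\phi\,(X_\mu(x))=X_\nu(\phi(x))$ for every $x\in M$. Since the right-hand side depends on $x$ only through $\phi(x)$, this single identity already forces $X_\mu$ to be a projectable vector field (equivalently, $X_\mu$ carries basic functions to basic functions, as recalled in the remark preceding the statement), and it pins down its projection as $X_\nu$. Thus projectability is not an extra hypothesis but a built-in consequence of relatedness, and the specialized statement is literally the submersion instance of the general criterion. Conversely, if $X_\mu$ projects to $X_\nu$ then $\d\phi\cdot X_\mu=X_\nu\smalcirc\phi$, i.e. $X_\mu$ and $X_\nu$ are $\phi$-related.

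Alternatively, to keep the argument self-contained within this subsection, I would work directly in the complex $\X^\bullet(\phi)=\Gamma(\nu(\F_\phi))$. A representative of $\mod(\phi)$ is $X_{\mu,\nu}=\d\phi\cdot X_\mu-X_\nu\smalcirc\phi$; choosing any $\phi$-lift $\widetilde X_\nu$ of $X_\nu$ (which exists since $\phi$ is a submersion), this represents the class $[X_\mu-\widetilde X_\nu]\in\Gamma(\nu(\F_\phi))$. By the preceding Lemma, $\mod(\phi)=0$ means this class is exact, i.e. equal to $\d h=[X_{-h}]$ for some $h\in C^\infty(M)$; equivalently $X_\mu+X_h-\widetilde X_\nu$ is tangent to the fibers $\F_\phi$, so that $\d\phi\cdot(X_\mu+X_h)=X_\nu\smalcirc\phi$. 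Using the change-of-volume formula \eqref{eq:change:form}, $X_\mu+X_h=X_{e^{-h}\mu}$, and hence $X_{e^{-h}\mu}$ projects to $X_\nu$, as desired.

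I do not expect a genuine obstacle here: the content lies entirely in the dictionary between $\phi$-relatedness and projectability for a surjective submersion, together with the observation that the projection of a modular (hence Poisson) vector field that is Poisson along basic functions is automatically a Poisson vector field on $N$. The only point requiring a word of care is that this projection is the \emph{modular} vector field $X_\nu$ and not merely some Poisson vector field, but this is guaranteed because the relatedness equation identifies it as $X_\nu$ from the outset.
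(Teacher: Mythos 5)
Your proposal is correct and follows essentially the same route as the paper, which simply notes that the corollary ``follows immediately from the definition of the modular class'': the substance is the earlier corollary identifying $\mod(\phi)=0$ with the existence of $\mu,\nu$ whose modular vector fields are $\phi$-related, plus the observation that for a surjective submersion $\phi$-relatedness is exactly projectability of $X_\mu$ onto $X_\nu$. Your second, self-contained argument via the Lemma on the complex $\Gamma(\nu(\F_\phi))$ and the change-of-volume formula \eqref{eq:change:form} is just an unwinding of the proof of that earlier corollary, so nothing genuinely new is needed and nothing is missing.
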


This corollary follows immediately from the definition of the modular class. One can also describe this in a more formal fashion, which will be useful later, as follows. Denote by $\X_\proj^k(M)\subset\X^k(M)$ the space of \textbf{projectable $k$-multivector fields} along $\phi:M\to N$. If $\Ver:=T\F_\phi=\ker \d\phi$ denotes the vertical subbundle, a vector field $X\in \X^k(M)$ is projectable if and only if
\[ \Lie_{Y}X=0,\qquad \forall Y\in\Gamma(\Ver). \]
More generally, a multivector field $A\in \X^k(M)$ is projectable if and only if
\[ \d \phi\cdot(\Lie_{Y}A)=0,\qquad \forall Y\in\Gamma(\Ver).\]

\begin{lem}
For a surjective Poisson submersion $\phi:M\to N$ the projectable multivector fields $\X_\proj^\bullet(M)$ is a subcomplex of $(\X(M),\d_\pi)$.
\end{lem}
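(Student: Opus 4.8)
The plan is to show that the Poisson differential $\d_\pi=[\pi,\cdot]$ preserves projectability, and the cleanest route is through the description of projectable fields as those that are $\phi$-related to a field on $N$, combined with the naturality of the Schouten bracket. First I would reformulate the projectability condition: for a surjective submersion, the stated requirement $\d\phi\cdot(\Lie_Y A)=0$ for all $Y\in\Gamma(\Ver)$ is equivalent to asking that the multivector field along $\phi$ given by $x\mapsto \wedge^k\d_x\phi(A_x)\in\wedge^k T_{\phi(x)}N$ be constant along the fibers of $\phi$. Since $\phi$ is a surjective submersion, such a field descends to a well-defined and smooth $A_N\in\X^k(N)$, so that $A$ is projectable precisely when it is $\phi$-related to some $A_N$, i.e. $\d\phi\cdot A=A_N\smalcirc\phi$; moreover $A_N$ is unique and the assignment $A\mapsto A_N$ is linear. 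This equivalence is checked in an adapted local frame $(\partial_{y^i},\partial_{z^a})$, with $y$ pulled-back coordinates from $N$ and $z$ fiber coordinates.

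Next I would invoke the naturality of the Schouten--Nijenhuis bracket under $\phi$-relatedness: if $P$ is $\phi$-related to $P'$ and $Q$ is $\phi$-related to $Q'$, then $[P,Q]$ is $\phi$-related to $[P',Q']$. Because $\phi$ is a Poisson map, the Poisson tensors are themselves $\phi$-related, $\d\phi\cdot\pi=\pi_N\smalcirc\phi$. Hence, for any projectable $A$ with projection $A_N$, one obtains that $[\pi,A]$ is $\phi$-related to $[\pi_N,A_N]\in\X^{k+1}(N)$; that is, $\d_\pi A$ is $\phi$-related to $\d_{\pi_N}A_N$.

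By the reformulation of the first step, this says exactly that $\d_\pi A$ is again projectable, so $\X^\bullet_\proj(M)$ is closed under $\d_\pi$ and is therefore a subcomplex of $(\X^\bullet(M),\d_\pi)$. As a byproduct, the projection $A\mapsto A_N$ is a cochain map $(\X^\bullet_\proj(M),\d_\pi)\to(\X^\bullet(N),\d_{\pi_N})$, which supplies one of the two arrows in the diagram announced in the introduction.

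The main obstacle is the naturality of the Schouten bracket under $\phi$-relatedness, since pushing a bracket forward is not formal for an arbitrary smooth map. I would either cite the standard result or prove the instance needed here directly: a projectable multivector field is, locally over basic functions, a combination of projectable vector fields (and of $\pi$ itself, which is projectable), so by the Leibniz rule for the Schouten bracket the claim reduces to the classical fact that if $X,Y$ are $\phi$-related to $X',Y'$ then $[X,Y]$ is $\phi$-related to $[X',Y']$. With that in hand the remaining computations are routine.
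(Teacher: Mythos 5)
Your argument is correct, and it takes a somewhat different route from the paper's. The paper never introduces the projected field $A_N$: it checks the infinitesimal criterion $\d\phi\cdot(\Lie_Y(\d_\pi A))=0$ for vertical $Y$ directly, expanding $\Lie_Y[\pi,A]=[\Lie_Y\pi,A]+[\pi,\Lie_Y A]$ by the graded Jacobi identity and then using that $\Lie_Y\pi$ and $\Lie_Y A$ both push forward to zero. You instead recast projectability as $\phi$-relatedness to a field on $N$ and apply the naturality of the Schouten bracket once, to the pair $(\pi,A)\sim(\pi_N,A_N)$. Both proofs ultimately rest on the same core fact --- the Schouten bracket of $\phi$-related multivector fields is $\phi$-related; the paper's third displayed equality, pushing $\d\phi$ inside the brackets, is exactly an instance of it --- but your version dispenses with the Jacobi-identity step and, as you note, yields the cochain map $\X^\bullet_\proj(M)\to\X^\bullet(N)$ needed for the sequence \eqref{short:seq:submersion} as a byproduct, which is a genuine gain. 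Two small caveats. First, the equivalence between the infinitesimal condition and the existence of a globally defined $A_N$ requires the fibers of $\phi$ to be connected (otherwise one only gets constancy on each fiber component); this is harmless for the lemma itself, since the condition to be verified is local, but it should be flagged if you want the descent map globally. Second, your reduction of the naturality statement to the vector-field case does go through, but for a slightly subtler reason than ``combination over basic functions'': the terms of a projectable multivector field that contain a vertical factor may have non-basic coefficients, and what saves the decomposition is that any function times a vertical vector field is itself projectable (it projects to zero), so those coefficients can be absorbed into a vertical factor; with that, the bi-Leibniz expansion of $[\pi,A]$ closes up on brackets of projectable vector fields as you claim.
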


\begin{proof}
Since $\pi:=\pi_M$ is itself projectable, for any $A\in \X_\proj^\bullet(M)$ and $Y\in\Gamma(\Ver)$, we compute:
\begin{align*}
\d\phi\cdot(\Lie_{Y}(\d_\pi A))
&= \d\phi\cdot(\Lie_{Y}[\pi,A])\\
&= \d\phi\cdot([\Lie_{Y}\pi,A]+[\pi,\Lie_{Y}A])\\
&=[\d\phi\cdot(\Lie_{Y}\pi),\d\phi\cdot A]+[\d\phi\cdot\pi,\d\phi\cdot(\Lie_{Y}A)]=0,
\end{align*}
This proves that $\d_\pi(\X_\proj^\bullet(M))\subset\X_\proj^\bullet(M)$.
\end{proof}

The cohomology of the complex $(\X^\bullet_{\proj}(M),\d_\pi)$ is denoted by $H^\bullet_{\proj,\pi}(M)$ and we refer to it as the \textbf{projectable Poisson cohomology}. Now we have a short exact sequence of complexes:
\begin{equation}
\label{short:seq:submersion}
0\rmap \X_\proj^\bullet(M) \rmap \X^\bullet(M)\oplus\X^\bullet(N)\rmap \X^\bullet(\phi)\rmap 0
\end{equation}
where the first map is given by $X\mapsto (X,\phi_*(X))$ and the the second map is given by $(X,Y)\mapsto \d\phi \cdot X-Y\circ\phi$. At the level of cohomology we obtain a long exact sequence:
\begin{equation}
\label{short:seq:submersion:cohom}
\cdots \rmap H^1_{\proj,\pi} \rmap H^1_\pi(M)\oplus H^1_\pi(N)\rmap H^1(\phi)\rmap H^2_{\proj,\pi}\rmap \cdots
\end{equation}
The map in this sequence takes $(\mod(M),\mod(N))$ to $\mod(\phi)$, and this is already true at the level of representatives. Hence, whenever $\mod(\phi)=0$, we can choose volume forms $\mu$ on $M$ and $\nu$ on $N$ such the vector field $X_{\mu,\nu}$ representing $\mod(\phi)$ vanishes, and this means that $X_\mu$ projects to $X_\nu$.

\begin{ex}
\label{ex:2-dim}
As a very simple example, consider the linear Poisson bracket on $M=\Rr^2$ which is dual to the non-abelian two-dimensional Lie algebra:
\[ \{x,y\}=x.\]
The modular vector field relative to the volume form $\mu=\d x\wedge \d y$ is $X_\mu=\frac{\partial}{\partial y}$, and so $\mod(M)\not=0$.

Now consider the two projections $\phi_i:\Rr^2\to\Rr$ on the first and the second factor. These are both Poisson submersions, but their modular classes are quite different. For the first projection, the modular vector field projects onto the zero vector field on $N=\Rr$, and we have $\mod(\phi_1)=0$. However, for the second projection the modular vector field projects onto a non-zero vector field, and indeed $\mod(\phi_2)\not=0$ (note that $\mod(N)=\mod(\Rr)=0$ but $H^1_\pi(N)\simeq C^\infty(\Rr)$).
\end{ex}

The characterization of the modular class of a map as a characteristic class, also becomes simple in the case of a Poisson submersion: the bundle $\Ver$ carries a natural representation of the Lie algebroid $\nu^*(\F_\phi)=\phi^*T^*N$, defined by:
\[ \nabla_{\phi^*\al} X:=\Lie^{T^*M}_{\phi^*\al}X, \quad (\al\in\Omega^1(N), X\in\Gamma(\Ver)).\]
Here $\Lie^{T^*M}$ denotes the Lie derivative on the Lie algebroid $T^*M$. In this way, $\wedge^\top \Ver$, as well as $\wedge^\top \Ver^*$,
become one-dimensional representations of $\phi^*T^*N$ and we have:

\begin{prop}
\label{prop:mod:subm:char:vert*}
The modular class of a surjective Poisson submersion $\phi:M\to N$ is given by
\[\mod \phi=\ch(\wedge^\top \Ver^*).\]
\end{prop}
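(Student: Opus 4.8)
The plan is to deduce this from the representation-theoretic description of $\mod\phi$ in Proposition \ref{prop:mod:poisson:rep}, in exact analogy with the proof of Theorem \ref{thm:relative:mod:class}. By that proposition, $\mod\phi=\tfrac12\ch(L^\phi)$ with $L^\phi=\otimes^2\wedge^\top T^*M\otimes^2\wedge^\top\phi^*TN$, so it suffices to identify the $\phi^*T^*N$-representation $L^\phi$ with $\otimes^2\wedge^\top\Ver^*$; then
\[ \mod\phi=\tfrac12\ch(\otimes^2\wedge^\top\Ver^*)=\ch(\wedge^\top\Ver^*). \]

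First I would write down the short exact sequence of vector bundles induced by the submersion,
\[ 0\rmap \Ver\rmap TM\stackrel{\d\phi}{\rmap}\phi^*TN\rmap 0, \]
which is available precisely because $\d\phi$ is everywhere surjective and $\Ver=\ker\d\phi$. Taking top exterior powers and dualizing yields the canonical line-bundle isomorphism
\[ \wedge^\top T^*M\cong \wedge^\top\Ver^*\otimes\wedge^\top\phi^*T^*N. \]
Substituting this into the definition of $L^\phi$ and using that $\wedge^\top\phi^*T^*N\otimes\wedge^\top\phi^*TN$ is canonically trivial, the two transverse factors cancel and one is left with $L^\phi\cong\otimes^2\wedge^\top\Ver^*$ as line bundles.

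The main point --- and the only place where genuine work is needed --- is to check that this isomorphism intertwines the relevant $\phi^*T^*N$-representations, so that it is an isomorphism of representations and not merely of line bundles. Concretely, I would verify that the tensor splitting $\wedge^\top T^*M\cong\wedge^\top\Ver^*\otimes\wedge^\top\phi^*T^*N$ is compatible with the representations entering $L^\phi$: the factor $\wedge^\top\phi^*T^*N$ should be acted on exactly as in $j^*L^{T^*N}$, so that it cancels against $\otimes^2\wedge^\top\phi^*TN$, while the residual action on $\wedge^\top\Ver^*$ should be the one induced by the vertical representation $\nabla_{\phi^*\al}X=\Lie^{T^*M}_{\phi^*\al}X$ defined before the statement. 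This amounts to establishing two facts: that $\Lie^{T^*M}_{\phi^*\al}$ preserves the vertical subbundle $\Ver$ --- so that the quotient action on $\phi^*TN$ is well defined --- and that this quotient action agrees with the $j$-pullback of the canonical $T^*N$-action on $TN$; both follow by unwinding the anchor and bracket formulas \eqref{eq:bracket:pull:back:algbrd}--\eqref{eq:anchor:pull:back:algbrd} and using that $\phi$ is Poisson, so that $\pi_M^\sharp\phi^*\al$ is $\phi$-related to $\pi_N^\sharp\al$. I expect this compatibility check to be the main obstacle, since one must keep track of the two different Lie derivatives appearing in $L^{T^*M}$ and of the duality between $\phi^*TN$ and $\phi^*T^*N$; it is, however, entirely parallel to the corresponding step in Theorem \ref{thm:relative:mod:class}, where the conormal sequence \eqref{eq:seq:Lie:Poisson} played the role taken here by the vertical sequence.
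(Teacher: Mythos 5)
Your proposal is correct and follows the same route as the paper: both invoke Proposition \ref{prop:mod:poisson:rep}, use the vertical short exact sequence $0\to \Ver\to TM\to\phi^*TN\to 0$ to identify $L^\phi$ with $\otimes^2\wedge^\top\Ver^*$ as representations, and then halve the characteristic class. The paper simply states that the identification of representations is easy, whereas you spell out the compatibility check; that added detail is consistent with what the paper intends.
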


\begin{proof}
The short exact sequence
\[0\rmap \Ver\hookrightarrow TM\rmap \phi^*TN\rmap 0\]
yields the canonical line bundle isomorphism
\[\wedge^\top T^*M\otimes \wedge ^\top \phi^*TN\cong \wedge^\top \Ver^*.\]
With this isomorphism  we can easily identify the representation
$L^\phi$ given in Proposition \ref{prop:mod:poisson:rep} with $\otimes^2\wedge^\top \Ver^*$ defined above.
So, $\mod(\phi)=\frac{1}{2}\ch(\otimes^2\wedge^\top \Ver^*)=\ch(\wedge^\top\Ver^*)$.
\end{proof}

\subsection{Reduction: modular class of Poisson quotients}         %
\label{subsec:Poisson:quotients}                                   %

Let $(M,\pi_M)$ be a Poisson manifold, let $(G,\pi_G)$ be a Poisson-Lie group and
let $G\times M\to M$ be a Poisson action. For simplicity, we will always assume that $G$ is connected and we will say that $(M,\pi)$ is a Poisson $G$-space.
For a $G$-space the set of $G$-invariant functions, denoted $C^\infty(M)^G$, is a Poisson
subalgebra of $C^\infty(M)$. Hence, if the action is proper and free, there is a quotient Poisson structure on
$M/G$ such that the projection $\phi:M\to M/G$ is a surjective, Poisson submersion. In this situation,
it is natural to ask what is the relationship between $\mod(M/G)$, $\mod(M)$ and the group $G$.

The following fact will be crucial to understand this relationship:

\begin{thm}
\label{thm:mod:class:quotient}
Let $G$ be a Poisson-Lie group, let $G\times M\to M$ be a proper and free Poisson action and let $\phi:M\to M/G$ be the
quotient map. The modular class of $\phi$ vanishes:
\[ \mod(\phi)=0.\]
\end{thm}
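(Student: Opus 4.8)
The plan is to use the characterization of the modular class of a Poisson submersion as a characteristic class. By Proposition \ref{prop:mod:subm:char:vert*}, we have $\mod(\phi)=\ch(\wedge^\top \Ver^*)$, where $\Ver=\ker\d\phi$ is the vertical subbundle of $\phi:M\to M/G$ and the representation of $\phi^*T^*(M/G)=\nu^*(\F_\phi)$ on $\Ver$ is given by $\nabla_{\phi^*\al}X=\Lie^{T^*M}_{\phi^*\al}X$. Thus everything reduces to showing that the line bundle $\wedge^\top\Ver$ admits an invariant (nowhere-vanishing) section, i.e.\ a section whose characteristic cocycle vanishes.

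The key geometric observation is that for a free and proper action the vertical bundle $\Ver$ is canonically trivial. Indeed, the infinitesimal action gives a bundle map $\gg\times M\to \Ver$, $\xi\mapsto X_\xi$ (the fundamental vector fields), which is an isomorphism of trivial bundles since the action is free. Choosing a basis $\xi_1,\dots,\xi_k$ of $\gg$ therefore produces a global frame $X_{\xi_1},\dots,X_{\xi_k}$ of $\Ver$, and hence a nowhere-vanishing section $\mu_0:=X_{\xi_1}\wedge\cdots\wedge X_{\xi_k}\in\Gamma(\wedge^\top\Ver)$. First I would fix this section and compute its characteristic cocycle $\al_{\mu_0}\in\Omega^1(\phi)$, defined by $\nabla_{\phi^*\al}\mu_0=\langle\al_{\mu_0},\phi^*\al\rangle\,\mu_0$.

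The heart of the computation is to show $\al_{\mu_0}=0$, which amounts to the statement that each fundamental vector field $X_\xi$ is covariantly constant for $\nabla$, i.e.\ $\Lie^{T^*M}_{\phi^*\al}X_\xi=0$ for all $\al\in\Omega^1(M/G)$ and all $\xi\in\gg$. Unwinding the Lie derivative on the cotangent Lie algebroid, $\Lie^{T^*M}_{\phi^*\al}X_\xi = [\pi_M^\sharp(\phi^*\al),X_\xi]$, this is the assertion that the projectable (indeed basic) directions $\pi_M^\sharp(\phi^*\al)$ commute with the infinitesimal generators of the action. The natural way to see this is that for a Poisson action the Poisson tensor $\pi_M$ is compatible with the action in the multiplicative sense, so that $\pi_M^\sharp(\phi^*\al)$ is $G$-related to the corresponding object downstairs and its flow preserves the orbits; more directly, one checks that the bracket $[\pi_M^\sharp(\phi^*\al),X_\xi]$ is forced to lie in $\Ver$ (since $\pi_M^\sharp(\phi^*\al)$ is projectable, its flow permutes the fibers of $\phi$, hence the flow pushes the frame $\{X_{\xi_i}\}$ of $\Ver$ to another frame), while at the same time the $G$-invariance of the whole configuration forces the induced action on the trivialized line $\wedge^\top\Ver$ to be trivial. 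In any case, the conclusion is that $\nabla$ acts on $\mu_0$ by a multiple that integrates to $0$.

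The step I expect to be the main obstacle is precisely the last one: verifying that the fundamental vector fields give a \emph{parallel} frame, rather than merely a frame, for the connection $\nabla$. The subtlety is that for a \emph{Poisson}-Lie group action (as opposed to a Poisson action of an ordinary Lie group with $\pi_G=0$) the generators $X_\xi$ need not be Poisson vector fields, so one cannot naively claim $[\pi_M^\sharp\be,X_\xi]=0$ on the nose; what one must use instead is that $\phi^*\al$ is a basic form and that the combination appearing in $\nabla$ only tests $\phi$-related data. I would argue that because $\mu_0$ is built from $G$-invariant data and the connection $\nabla$ descends to the representation detecting $\mod(\phi)$, the characteristic cocycle $\al_{\mu_0}$ must be basic and, being the obstruction measured along the fibers where $\mu_0$ is by construction invariant, must vanish identically. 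An alternative, cleaner route that sidesteps this difficulty is to invoke the composition formula (Proposition \ref{prop:comp:poisson:map}) together with the fact that $\mod$ of a diffeomorphism vanishes: equivariance of the configuration under $G$ means the $G$-action is by Poisson diffeomorphisms commuting with $\phi$, so averaging (or directly using that $\mu_0$ is $G$-invariant) shows the characteristic cocycle is exact. Either way, $\mod(\phi)=\ch(\wedge^\top\Ver^*)=-\ch(\wedge^\top\Ver)=0$.
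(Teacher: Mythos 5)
Your overall strategy is sound and is genuinely different in presentation from the paper's: you reduce everything to Proposition \ref{prop:mod:subm:char:vert*} and try to exhibit a parallel nowhere-vanishing section of $\wedge^\top\Ver$, namely $\mu_0=\xi^1_M\wedge\cdots\wedge\xi^d_M$, whereas the paper builds an explicit volume form $\mu=\mu_G\wedge\phi^*\nu/\langle\mu_G,\xi^1_M\wedge\cdots\wedge\xi^d_M\rangle$ and checks directly that $X_{\mu,\nu}=0$ (the normalizing denominator there is precisely the pairing of $\mu_G$ with your frame, so the two arguments are really dual to one another). The problem is that the one step you yourself flag as ``the main obstacle'' --- that the fundamental vector fields are covariantly constant --- is never actually proved. ``The $G$-invariance of the whole configuration forces the induced action on the trivialized line to be trivial'' is an assertion, not an argument; and the proposed alternative via averaging is not available, since $G$ is not assumed compact. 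As written, the proof has a genuine gap at its pivotal point.

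The gap is real but closable, and the missing ingredient is the infinitesimal multiplicativity of the action, not invariance of $\mu_0$. On the generators $\phi^*\d f$, $f\in C^\infty(M/G)$, one has $\nabla_{\phi^*\d f}\xi_M=\Lie^{T^*M}_{\phi^*\d f}\xi_M=[X_{f\circ\phi},\xi_M]$ (the correction term $\d_\pi\langle\xi_M,\phi^*\d f\rangle$ vanishes because $\xi_M(f\circ\phi)=0$), and
\[
[\xi_M,X_{f\circ\phi}]=\Lie_{\xi_M}\bigl(\pi_M^\sharp\,\d(f\circ\phi)\bigr)=(\Lie_{\xi_M}\pi_M)^\sharp\,\d(f\circ\phi)+\pi_M^\sharp\,\d\bigl(\xi_M(f\circ\phi)\bigr).
\]
The second term vanishes since $f\circ\phi$ is basic. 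For the first term, the Poisson--Lie compatibility gives $\Lie_{\xi_M}\pi_M=-(\d_e\pi_G\cdot\xi)_M$, which is a section of $\wedge^2\Ver$ (a combination of wedges of fundamental vector fields), and any such bivector is annihilated by contraction with the basic form $\d(f\circ\phi)$. Hence $[X_{f\circ\phi},\xi_M]=0$ exactly, each $\xi^i_M$ is parallel, $\mu_0$ is parallel, and $\mod(\phi)=\ch(\wedge^\top\Ver^*)=-\ch(\wedge^\top\Ver)=0$. Note that this is precisely the point where the Poisson--Lie twisting could have caused trouble and does not: $\xi_M$ fails to be a Poisson vector field only in vertical directions, which the basic forms cannot see. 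With this computation inserted, your proof is complete and, in my view, cleaner than the paper's; without it, the argument does not go through.
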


\begin{proof}
We start by choosing a $G$-invariant fiberwise volume form $\mu_G\in\Omega^\top(\F_\phi)$, a volume form $\nu\in\Omega^\top(M/G)$, and  $\{\xi^1,\dots,\xi^d\}$ any basis of $\gg$, set:
\begin{equation}
\label{eq:vol:form}
\mu=\frac{\mu_G\wedge\phi^*\nu}{\langle\mu_G,\xi^1_M\wedge\cdots\wedge\xi^d_M\rangle},
\end{equation}
where $\xi_M$ denotes the infinitesimal generator associated with $\xi\in\gg$. We claim that the modular vector field relative to $\mu$ and $\nu$ vanishes: $X_{\mu,\nu}=0$.

Using the representation $\wedge^\top\Ver^*$ given by Proposition \ref{prop:mod:subm:char:vert*}, we find that the modular vector field $X_{\mu,\nu}$ satisfies for any $f\in C^\infty(M/G)$:
\begin{align}
\label{eq:mod:class:quotient:1}
X_{\mu,\nu}(f\circ\phi) \frac{\mu_G\wedge\phi^*\nu}{\langle\mu_G,\xi^1_M\wedge\cdots\wedge\xi^d_M\rangle}&=(\Lie_{X_{f\circ \phi}}\frac{\mu_G}{\langle\mu_G,\xi^1_M\wedge\cdots\wedge\xi^d_M\rangle})\wedge \phi^*\nu\notag\\
=\frac{(\Lie_{X_{f\circ \phi}}\mu_G)\wedge\phi^*\nu}{\langle\mu_G,\xi^1_M\wedge\cdots\wedge\xi^d_M\rangle}&-\frac{\Lie_{X_{f\circ \phi}}\langle\mu_G,\xi^1_M\wedge\cdots\wedge\xi^d_M\rangle}{(\langle\mu_G,\xi^1_M\wedge\cdots\wedge\xi^d_M\rangle)^2}\mu_G\wedge\phi^*\nu.
\end{align}

On the other hand, we compute:
\begin{align*}
X_{\log|\langle\mu_G,\xi^1_M\wedge\cdots\wedge\xi^d_M\rangle|}(f\circ\phi)
	&=-X_{f\circ\phi}(\log|\langle\mu_G,\xi^1_M\wedge\cdots\wedge\xi^d_M\rangle|)\\
	&=-\frac{1}{\langle\mu_G,\xi^1_M\wedge\cdots\wedge\xi^d_M\rangle}\Lie_{X_{f\circ\phi}}\langle\mu_G,\xi^1_M\wedge\cdots\wedge\xi^d_M\rangle\\
	&=-\frac{1}{\langle\mu_G,\xi^1_M\wedge\cdots\wedge\xi^d_M\rangle}\langle\Lie_{X_{f\circ\phi}}\mu_G,\xi^1_M\wedge\cdots\wedge\xi^d_M\rangle.
\end{align*}
In other words,
\[
\langle\Lie_{X_{f\circ\phi}}\mu_G,\xi^1_M\wedge\cdots\wedge\xi^d_M\rangle
=-X_{\log|\langle\mu_G,\xi^1_M\wedge\cdots\wedge\xi^d_M\rangle|}(f\circ\phi)\langle \mu_G,\xi^1_M\wedge\cdots\wedge\xi^d_M\rangle.
\]
This shows that:
\begin{equation}
\label{eq:mod:class:quotient:2}
\frac{(\Lie_{X_{f\circ \phi}}\mu_G)\wedge\phi^*\nu}{\langle\mu_G,\xi^1_M\wedge\cdots\wedge\xi^d_M\rangle}=-X_{\log|\langle\mu_G,\xi^1_M\wedge\cdots\wedge\xi^d_M\rangle|}(f\circ\phi)\frac{\mu_G\wedge\phi^*\nu}{\langle\mu_G,\xi^1_M\wedge\cdots\wedge\xi^d_M\rangle}
\end{equation}
and also:
\begin{align}
\label{eq:mod:class:quotient:3}
\frac{\Lie_{X_{f\circ \phi}}\langle\mu_G,\xi^1_M\wedge\cdots\wedge\xi^d_M\rangle}{(\langle\mu_G,\xi^1_M\wedge\cdots\wedge\xi^d_M\rangle)^2}\mu_G\wedge\phi^*\nu
&=\frac{\langle\Lie_{X_{f\circ \phi}}\mu_G,\xi^1_M\wedge\cdots\wedge\xi^d_M\rangle}{(\langle\mu_G,\xi^1_M\wedge\cdots\wedge\xi^d_M\rangle)^2}\mu_G\wedge\phi^*\nu
\notag \\
&=-\frac{X_{\log|\langle\mu_G,\xi^1_M\wedge\cdots\wedge\xi^d_M\rangle|}(f\circ\phi)}{\langle\mu_G,\xi^1_M\wedge\cdots\wedge\xi^d_M\rangle}\mu_G\wedge\phi^*\nu.
\end{align}
Equations \eqref{eq:mod:class:quotient:1}, \eqref{eq:mod:class:quotient:2} and \eqref{eq:mod:class:quotient:3} together show that:
\[ \langle X_{\mu,\nu},\d(f\circ\phi)\rangle=X_{\mu,\nu}(f\circ\phi)=0\]
Since the forms $\d (f\circ\phi)$ generate $\Gamma(\nu^*(\F_\phi))$, we conclude that $X_{\mu,\nu}=0$ and the theorem follows.
\end{proof}

The theorem says that one can always choose volume forms $\mu\in\Omega^\top(M)$ and $\nu\in\Omega^\top(M/G)$ such that $X_{\mu}$ projects to $X_\nu$. Note that if $X_{\mu}=X_h$ is hamiltonian this does not imply that $X_\nu$ is hamiltonian, since one may not be able to choose the hamiltonian $h$ to be a $G$-invariant function. In other words, one can have $\mod(M)=0$ while $\mod(M/G)\not=0$ as illustrated by the following simple example.

\begin{ex}
\label{ex:T*G}
For any Lie group $G$, its cotangent bundle $T^*G$ is a symplectic manifold. The action of $G$ on itself by left translations
lifts to a proper and free action of $G$ on $T^*G$ by Poisson diffeomorphisms. The quotient is:
\[ \phi:T^*G\to T^*G/G\simeq \gg^*,\]
where $\gg^*$ is equipped with its canonical linear Poisson
structure. Since $T^*G$ is symplectic, one always has
$\mod(T^*G)=0$. On the other hand, $\mod(\gg^*)=0$ if and only if the Lie algebra $\gg$ is unimodular.
\end{ex}

The precise relationship between $\mod(M)$ and $\mod(M/G)$ becomes clear if we use the short exact sequence \eqref{short:seq:submersion:cohom}. Since the map $\phi:M\to M/G$ has vanishing modular class, this sequence shows that there exists a class in the projectable cohomology $H^1_{\proj,\pi}$ which maps to the pair $(\mod(M),\mod(M/G))$. In general, the map $H^1_{\proj,\pi}\to H^1_\pi(M)\oplus H^1_\pi(N)$ is not injective, but the following corollary of Theorem \ref{thm:mod:class:quotient} shows that there is a canonical choice of such a cohomology class:

\begin{cor}
\label{cor:mod:action}
Let $G$ be a Poisson-Lie group, let $G\times M\to M$ be a proper and free Poisson action and let $\phi:M\to M/G$ be the
quotient map. If $\mu_G\in\Omega^\top(F_\phi)$ is a $G$-invariant fiberwise volume form, $\nu\in\Omega^\top(M/G)$ and $\{\xi^1,\dots,\xi^d\}$ a basis of $\gg$, set
\[
\mu=\frac{\mu_G\wedge\phi^*\nu}{\langle\mu_G,\xi^1_M\wedge\cdots\wedge\xi^d_M\rangle}.
\]
Then $[X_\mu]\in H^1_{\proj,\pi}(M)$ is mapped to the pair $(\mod(M),\mod(M/G))$.
\end{cor}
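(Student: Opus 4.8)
The plan is to recognize that this corollary is simply a bookkeeping consequence of Theorem~\ref{thm:mod:class:quotient} read through the short exact sequence \eqref{short:seq:submersion}, with no new analysis required. First I would verify that the prescribed $\mu$ really is a volume form on $M$. Since the action is free, the infinitesimal generators $\xi^1_M,\dots,\xi^d_M$ form a frame for the vertical bundle $\Ver=T\F_\phi$, so the denominator $\langle\mu_G,\xi^1_M\wedge\cdots\wedge\xi^d_M\rangle$ is a nowhere vanishing function; as $\mu_G\wedge\phi^*\nu$ is a nowhere vanishing top form on $M$, we conclude that $\mu\in\Omega^\top(M)$ and its modular vector field $X_\mu$ is well defined. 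By the very definition of the modular vector field, $[X_\mu]=\mod(M)\in H^1_\pi(M)$.

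Next I would check that $[X_\mu]$ descends to a class in $H^1_{\proj,\pi}(M)$. On the one hand $X_\mu$ is a Poisson vector field, i.e. $\Lie_{X_\mu}\pi=0$, which is equivalent to $\d_\pi X_\mu=[\pi,X_\mu]=0$, so $X_\mu$ is a $\d_\pi$-cocycle. On the other hand, Theorem~\ref{thm:mod:class:quotient}, applied to this very pair $(\mu,\nu)$, gives $X_{\mu,\nu}=\d\phi\cdot X_\mu-X_\nu\circ\phi=0$; this is precisely the statement that $X_\mu$ is $\phi$-related to $X_\nu$, hence projectable, with $\phi_*X_\mu=X_\nu$. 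Therefore $X_\mu\in\X^1_\proj(M)$ and $[X_\mu]\in H^1_{\proj,\pi}(M)$ makes sense.

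Finally I would trace $[X_\mu]$ through the two arrows of the diagram. These are induced at the chain level by the injection $X\mapsto(X,\phi_*X)$ of \eqref{short:seq:submersion} followed by the two projections, so the arrow $H^1_{\proj,\pi}(M)\to H^1_\pi(M)$ is $[X]\mapsto[X]$ and the arrow $H^1_{\proj,\pi}(M)\to H^1_\pi(M/G)$ is $[X]\mapsto[\phi_*X]$. Hence $[X_\mu]$ maps to $[X_\mu]=\mod(M)$ on the first factor and to $[\phi_*X_\mu]=[X_\nu]=\mod(M/G)$ on the second, the latter equality holding because $X_\nu$ is the modular vector field of the quotient relative to $\nu$. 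This is exactly the assertion that $[X_\mu]$ is sent to the pair $(\mod(M),\mod(M/G))$.

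There is no genuine obstacle remaining, since the one delicate input, namely the vanishing $X_{\mu,\nu}=0$, is established in Theorem~\ref{thm:mod:class:quotient}. The only points requiring care are the observation that $X_{\mu,\nu}=0$ is equivalent to $X_\mu$ being $\phi$-related to $X_\nu$ (so that $X_\mu$ is indeed projectable and $\phi_*X_\mu=X_\nu$), together with the identification of the two diagram arrows with $X\mapsto X$ and $X\mapsto\phi_*X$ coming from the injection in \eqref{short:seq:submersion}.
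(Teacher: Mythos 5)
Your proposal is correct and follows exactly the route the paper intends: the corollary is stated as an immediate consequence of Theorem~\ref{thm:mod:class:quotient} (which gives $X_{\mu,\nu}=0$, i.e.\ $X_\mu$ is $\phi$-related to $X_\nu$ and hence projectable) together with the description of the two arrows coming from the sequence \eqref{short:seq:submersion}. You have merely made explicit the bookkeeping that the paper leaves implicit.
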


Note that the class $[X_\mu]\in H^1_{\proj,\pi}(M)$ does not depend on the choices of $G$-invariant fiberwise volume form, $\nu\in\Omega^\top(M/G)$ and basis $\{\xi^1,\dots,\xi^d\}$.
Hence we can introduce:

\begin{defn}
The \textbf{modular class of the Poisson action} $G\times M\to M$ is the projectable Poisson cohomology class
\[ \mod(M,G):=[X_\mu]\in H^1_{\proj,\pi}(M). \]
\end{defn}

Since $\mod(M,G)$ is mapped to $\mod(M)$ and to $\mod(M/G)$, we have:
\begin{enumerate}[(i)]
\item if $\mod(M,G)=0$ then $\mod(M/G)=0$.
\item we can have $\mod(M)=0$, while $\mod(M,G)\not=0$ (see Example \ref{ex:T*G}).
\end{enumerate}

When $\gg$ is an unimodular Lie algebra the function $\langle\mu_G,\xi^1_M\wedge\cdots\wedge\xi^d_M\rangle$ is  constant, so the projectable modular class is represented by the modular vector field $X_\mu$, for any $G$-invariant volume form $\mu$. However, even when $\mod(M)=0$ and $\gg$ is unimodular one can still have $\mod(M,G)\not=0$ since there may not exist a $G$-invariant volume form which is also invariant under all Hamiltonian flows. In other words, $X_\mu$ is hamiltonian but one may not be able to choose the hamiltonian to be $G$-invariant as  illustrated by the following example.

\begin{ex}
On $M=\Rr^3$, with coordinates $(x,y,z)$, consider the action of $G=\Rr$ by translations in the $z$-direction. This action preserves the Poisson tensor:
\[ \pi_M=x\frac{\partial}{\partial x}\wedge \frac{\partial}{\partial y}+\frac{\partial}{\partial y}\wedge\frac{\partial}{\partial z}.\]
This Poisson structure is unimodular with invariant volume form $e^z \d x\wedge \d y \wedge \d z$.

The quotient $M/G$ can be identified with $\Rr^2$, with coordinates $(a,b)$, and Poisson structure
\[ \pi_M=a\frac{\partial}{\partial a}\wedge \frac{\partial}{\partial b}.\]
This Poisson structure is not unimodular (see Example \ref{ex:basic}), hence we must have $\mod(M,G)\not=0$.

We can compute $\mod(M,G)$ directly from its definition: we take $\mu_G=\d z\in\Omega^\top(F_\phi)$,  $\nu=\d a\wedge\d b\in\Omega^\top(M/G)$  and $\xi_M=\frac{\partial}{\partial z}$. Then,
\[
\mu=\frac{\mu_G\wedge\phi^*\nu}{\langle \mu_G,\xi_M\rangle}=\d x\wedge \d y\wedge \d z
\]
 which  is the canonical volume form in $\Rr^3$, so we obtain:
\[ \mod(M,G)=[X_\mu]=[-\frac{\partial}{\partial y}]. \]
The vector field $X_\mu=-\frac{\partial}{\partial y}$ projects to the modular vector field $X_\nu=-\frac{\partial}{\partial b}$, so that the class $\mod(M,G)$ maps to the class $\mod(M/G)$. On the other hand, $X_\mu$ is a hamiltonian vector field (e.g., one can choose as hamiltonian $h(x,y,z)=z$) and so  $\mod(M,G)$ maps to $\mod(M)=0$. But, because we cannot choose $h$ to be a $G$-invariant function, we have $\mod(M,G)\not=0$.
\end{ex}

We shall see later (Corollary \ref {cor:unimodular}) that for a proper and free \emph{hamiltonian action} one has that $\mod(M)=0$ and $\gg$ unimodular imply $\mod(M/G)=0$ (but still one can have $\mod(M,G)\not=0$!). On the other hand, we show now that when the group $G$ is compact and acts by Poisson diffeomorphisms then $\mod(M,G)$ vanishes.  In order to explain this, note that the \textbf{$G$-invariant multivector fields} form a subspace $\X^k(M;G)\subset \X_\proj^k(M)$ of the space of projectable multivector fields. In fact, we have that (since $G$ is connected) that $A\in \X^k(M)$ is $G$-invariant if and only if
\[ \Lie_{\xi_M}A=0,\quad  (\xi\in\gg),\]
and this implies that $A$ is projectable. Moreover, this also shows that $\X^\bullet(M;G)$ is a subcomplex of $(\X^\bullet(M),\d_\pi)$, so we have the inclusion of complexes:
\[ \X^\bullet(M;G)\subset  \X_\proj^k(M) \subset \X^\bullet(M). \]
Observe that, in degree zero, we have $\X^0(M;G)=\X^0_{\proj}(M)=C^\infty(M)^G$, so the natural map
\[ H^\bullet_\pi(M;G)\to H^\bullet_{\proj,\pi}(M) \]
is injective in degree 1.

\begin{rem}
\label{rem:G:invariance}
In general, the projectable modular class $\mod(M,G)$ cannot be represented by a $G$-invariant Poisson vector field, i.e., it does not lie in the subspace $H^1_\pi(M;G)\subset H^1_{\proj,\pi}(M)$. In fact, one can show that if $\chi_0\in\gg$ and $\vartheta_0\in\gg^*$ denote, respectively, the adjoint characters of $\gg^*$ and $\gg$ (see Example \ref{ex:Poisson-Lie}), then
\begin{equation}
\label{eq:invariance:modular}
\Lie_{\xi_M}X_\mu=\frac{1}{2}\left((\ad_{\gg}(\chi_0)\cdot\xi)_M +(\ad^*_{\gg^*}(\vartheta_0)\cdot\xi)_M\right),\quad (\xi\in\gg).
\end{equation}
\end{rem}

Using an averaging argument, Ginzburg in (\cite[Corollary 4.13]{G1}) proves that the morphism $H^1_\pi(M;G)\to H^1_{\pi}(M)$ is injective when the
action is by Poisson diffeomorphisms and $G$ is compact . He also conjectured that this should be true for any Poisson action of a compact Lie  group, but this is still an open conjecture. Ginzburg's result combined with the previous remark gives:

\begin{cor}
\label{cor:compact}
Let $G\times M\to M$ be a proper and free action of a compact Lie group by Poisson diffeomorphisms. If $\mod(M)=0$, then $\mod(M,G)=0$ and hence $\mod(M/G)=0$.
\end{cor}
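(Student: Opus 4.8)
The plan is to show that, under the two hypotheses, the projectable class $\mod(M,G)$ actually lifts into the smaller subspace $H^1_\pi(M;G)$ of $G$-invariant Poisson vector fields, where Ginzburg's injectivity result is available. The Remark above explicitly warns that in general $\mod(M,G)$ is \emph{not} representable by a $G$-invariant vector field, the obstruction being the right-hand side of \eqref{eq:invariance:modular}. So the first step is to check that both terms of that obstruction vanish here. Since $G$ acts by Poisson diffeomorphisms we have $\pi_G=0$, so the dual Lie algebra $\gg^*$ is abelian; this forces the adjoint character $\chi_0\in\gg$ of $\gg^*$ to vanish, and it also trivializes the coadjoint action, so $\ad^*_{\gg^*}(\vartheta_0)=0$. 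Independently, a compact Lie group is unimodular, so the adjoint character $\vartheta_0\in\gg^*$ of $\gg$ also vanishes. Feeding $\chi_0=0$ and $\vartheta_0=0$ into \eqref{eq:invariance:modular} gives $\Lie_{\xi_M}X_\mu=0$ for every $\xi\in\gg$.

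Next I would fix a good representative. Because $\gg$ is unimodular the normalizing function $\langle\mu_G,\xi^1_M\wedge\cdots\wedge\xi^d_M\rangle$ is constant, so the volume form $\mu$ of \eqref{eq:vol:form} is itself $G$-invariant and $\mod(M,G)=[X_\mu]$. Combined with the previous step, $X_\mu$ is a $G$-invariant Poisson vector field and therefore defines a class $[X_\mu]_G\in H^1_\pi(M;G)$, which the inclusion of complexes $\X^\bullet(M;G)\subset\X^\bullet_\proj(M)$ carries to $\mod(M,G)$ under the (degree-one injective) map $H^1_\pi(M;G)\to H^1_{\proj,\pi}(M)$.

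Finally I would chase the commuting triangle induced by $\X^\bullet(M;G)\subset\X^\bullet_\proj(M)\subset\X^\bullet(M)$, namely $H^1_\pi(M;G)\to H^1_{\proj,\pi}(M)\to H^1_\pi(M)$. By construction $\mod(M,G)$ maps to $\mod(M)$ in $H^1_\pi(M)$, so the composite $H^1_\pi(M;G)\to H^1_\pi(M)$ sends $[X_\mu]_G$ to $\mod(M)=0$. This composite is exactly the morphism Ginzburg proves injective in \cite[Corollary 4.13]{G1} for a compact group acting by Poisson diffeomorphisms; hence $[X_\mu]_G=0$, and pushing forward along the injection gives $\mod(M,G)=0$. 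Since $\mod(M,G)$ maps to $\mod(M/G)$, property (i) following the definition of the modular class of the action yields $\mod(M/G)=0$.

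The main obstacle is conceptual rather than computational: the point is to recognize that the two hypotheses are precisely what annihilate both terms of the obstruction \eqref{eq:invariance:modular}, allowing $\mod(M,G)$ to be realized inside $H^1_\pi(M;G)$, where injectivity is known. The only genuinely external input is Ginzburg's averaging argument; the remaining steps are bookkeeping with the inclusions of complexes and the maps in \eqref{short:seq:submersion:cohom}.
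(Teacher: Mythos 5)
Your proposal is correct and follows essentially the same route as the paper: the action being by Poisson diffeomorphisms forces $\pi_G=0$, hence $\gg^*$ abelian, so both terms on the right of \eqref{eq:invariance:modular} vanish and $X_\mu$ is $G$-invariant; then $\mod(M,G)$ lives in $H^1_\pi(M;G)$ and Ginzburg's injectivity of $H^1_\pi(M;G)\to H^1_\pi(M)$ for compact groups kills it when $\mod(M)=0$. The extra observations you make (unimodularity of $\gg$ forcing the normalizing function to be constant, and $\vartheta_0=0$) are harmless refinements of the same argument, not a different method.
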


\begin{proof}
Since the action is by Poisson diffeomorphisms, by Remark \ref{rem:G:invariance}, the modular vector field $X_\mu$ representing $\mod(M,G)$ is $G$-invariant. Therefore, the modular class of the action lies in the $G$-invariant Poisson cohomology: $\mod(M,G)\in H^1_\pi(M;G)$. Since $H^1_\pi(M;G)\to H^1_\pi(M)$ is injective, if $\mod(M)=0$ then we must have that $\mod(M,G)=0$ and the corollary follows.
\end{proof}


As a curious application of these results, we close this section
with  an example of a Poisson manifold which cannot be written as a quotient of a symplectic manifold by an action of a compact Lie group.

\begin{ex}
On $M=\Rr^3-\{0\}$, with coordinates $(x,y,z)$  consider the Poisson
 manifold:
\[ \pi=(x^2+y^2+z^2)^{\frac{1}{2}}\frac{\partial}{\partial x}\wedge\frac{\partial}{\partial y}. \]
This Poisson structure is the quotient of $\Cc^2-\{0\}$, with the standard symplectic structure, by a free $\Ss^1$-action by symplectomorphisms
(see \cite[Example 4.8]{FOR}). Hence, $\mod(M,\pi)=0$. On the other hand, if we consider on $M=\Rr^3-\{0\}$ the Poisson
structure:
\[ \bar{\pi}=\left[(x^2+y^2+z^2)^{\frac{1}{2}}-1\right]\frac{\partial}{\partial x}\wedge\frac{\partial}{\partial y}, \]
we claim that $\mod(M,\bar{\pi})\not=0$. To see this, we let $\mu=\d x\wedge\d y\wedge \d z$ and compute the associated modular vector field:
\[ X_\mu=\frac{1}{(x^2+y^2+z^2)^{\frac{1}{2}}}\left(y\frac{\partial}{\partial x}-x\frac{\partial}{\partial y}\right).\]
Since $\bar{\pi}$ vanishes on the sphere of radius 1 while $X_\mu$ does not vanish there, we conclude that $X_\mu$ is not a hamiltonian vector field and
our claim follows. Since $\mod(M,\bar{\pi})\not=0$, by the corollary, this Poisson manifold is not the quotient of a symplectic manifold by
an action of a compact Lie group.
\end{ex}

\subsection{Reduction: modular class of hamiltonian quotients} %
\label{subsec:hamilton:quotients}                              %

We will consider now a Poisson action of $G\times M\to M$ which is hamiltonian in the sense of Lu \cite{Lu,Lu1}. This means
that there exists a smooth map $\kappa:M\to G^*$ with values in the dual, 1-connected, Poisson-Lie group, equivariant
relative to the left dressing action of $G$ on $G^*$, such that the following moment map condition holds:
\begin{equation}
\label{eq:moment:map}
\xi_M=\pi^\sharp(\kappa^*\xi^L)
\end{equation}
(here $\xi^L\in\Omega^1(G^*)$ denotes the left-invariant 1-form whose value at the identity is $\xi\in T^*_eG^*=\gg$).


The equivariance condition on the moment map implies that $\kappa:M\to G^*$ is a Poisson map, when we equip $G^*$ with its Poisson-Lie structure.
Our first result is the following.

\begin{prop}\label{prop:mod:class:moment:map}
For a proper hamiltonian action $G\times M\to M$ of a Poisson-Lie group $G$, the modular class of the moment map $\kappa:M\to G^*$  is given by
\[ \mod(\kappa)=-\brr{\vartheta_0^R\circ \kappa}, \]
where $\vartheta_0$ is the adjoint character of  $\gg$.
\end{prop}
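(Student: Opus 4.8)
My plan is to use the explicit representative from the Lemma of Section~\ref{subsec:mod:Poisson:propert}: for volume forms $\mu\in\Omega^{\top}(M)$ and $\nu\in\Omega^{\top}(G^*)$ one has $\mod(\kappa)=[X_{\mu,\nu}]$ with $X_{\mu,\nu}=\d\kappa\cdot X_\mu-X_\nu\circ\kappa$, and since the $1$-forms $\kappa^*\xi^L$ ($\xi\in\gg$) generate $\kappa^*T^*G^*$ it suffices to compute $\langle X_{\mu,\nu},\xi^L\rangle$ for each $\xi$. The key reduction uses the divergence identity $\operatorname{div}_\mu(\pi_M^\sharp\al)=\langle X_\mu,\al\rangle+\langle\pi_M,\d\al\rangle$ together with the moment map condition \eqref{eq:moment:map}, which says exactly that $\xi_M=\pi_M^\sharp(\kappa^*\xi^L)$. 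Applying this on $M$ with $\al=\kappa^*\xi^L$, and the same identity on $G^*$ for the dressing field $\xi_{G^*}=\pi_{G^*}^\sharp(\xi^L)$, the terms $\langle\pi_M,\kappa^*\d\xi^L\rangle$ and $\langle\pi_{G^*},\d\xi^L\rangle\circ\kappa$ coincide (because $\kappa$ is a Poisson map) and cancel in the difference, leaving the clean formula
\[
\langle X_{\mu,\nu},\xi^L\rangle=\operatorname{div}_\mu(\xi_M)-\bigl(\operatorname{div}_{\nu}(\xi_{G^*})\bigr)\circ\kappa .
\]
Thus everything reduces to comparing the $\mu$-divergence of the infinitesimal generators on $M$ with the $\nu$-divergence of the dressing fields on $G^*$.

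For the $G^*$-side I would take $\nu=\nu^L$ left invariant and apply the Evens--Lu--Weinstein formula of Example~\ref{ex:Poisson-Lie} to the dual group $G^*$. Interchanging $\gg$ and $\gg^*$, the adjoint characters of $(\gg^*)^*=\gg$ and of $\gg^*$ are respectively $\vartheta_0$ and $\chi_0$, so $X^{G^*}_{\nu^L}=\tfrac12\bigl(\vartheta_0^L+\vartheta_0^R+\pi_{G^*}^\sharp(\chi_0^L)\bigr)$ and $\operatorname{div}_{\nu^L}(\xi_{G^*})=\langle X^{G^*}_{\nu^L},\xi^L\rangle+\langle\pi_{G^*},\d\xi^L\rangle$. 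Here one uses the Lie-theoretic fact that $\vartheta_0$ is a \emph{character} of $\gg$, $\langle\vartheta_0,[\xi,\eta]\rangle=\tr\ad_{[\xi,\eta]}=0$; dually it lies in the kernel of the cobracket of $G^*$, which guarantees that $\vartheta_0^L$ and $\vartheta_0^R$ are genuine Poisson vector fields (so that $[\vartheta_0^R\circ\kappa]$ is well defined). Since $\chi_0^L=\d\ln g_0$ with $g_0=\det\Ad_{G^*}$ is exact, $\pi_{G^*}^\sharp(\chi_0^L)$ is hamiltonian and pulls back to an exact class in $H^1_\pi(\kappa)$; this isolates the $\vartheta_0^L,\vartheta_0^R$ contributions that must combine with the $M$-side to leave precisely $-\langle\vartheta_0^R,\xi^L\rangle\circ\kappa$.

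The hardest part is the $M$-side divergence $\operatorname{div}_\mu(\xi_M)$, and this is where properness is indispensable. I would choose $\mu$ adapted to the action — built from a $G$-invariant volume along the orbits together with a transverse volume, exactly as the volume form \eqref{eq:vol:form} used in the proof of Theorem~\ref{thm:mod:class:quotient} — and compute $\operatorname{div}_\mu(\xi_M)$ in terms of the normalising function $\langle\mu_G,\xi^1_M\wedge\cdots\wedge\xi^d_M\rangle$; the residual discrepancy with the $G^*$-side should then be a term of the form $\xi_M(f)$, which represents an exact class. I expect this matching to be the main obstacle, since it entangles the non-unimodularity measured by $\vartheta_0$ with the divergences of the generators. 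That properness cannot be dropped is already visible for the coadjoint action of the non-unimodular $ax+b$ group on $\gg^*$ with $\kappa=\mathrm{id}$: there $\mod(\kappa)=\mod(\mathrm{id})=0$, whereas $[\vartheta_0^R\circ\kappa]$ represents the modular class $\mod(\gg^*)\neq0$, the failure being concentrated at the fixed points, which have non-compact stabiliser. Once the $M$-term is under control, substituting into the displayed formula gives $\langle X_{\mu,\nu},\xi^L\rangle\equiv-\langle\vartheta_0^R,\xi^L\rangle\circ\kappa$ for every $\xi$, that is $\mod(\kappa)=-[\vartheta_0^R\circ\kappa]$.
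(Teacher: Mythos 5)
Your reduction to the identity $\langle X_{\mu,\nu},\xi^L\rangle=\operatorname{div}_\mu(\xi_M)-\bigl(\operatorname{div}_{\nu}(\xi_{G^*})\bigr)\circ\kappa$ is sound and is, in substance, the paper's equation \eqref{eq:fundvf:divfree}: the moment map condition converts $\langle X_\mu,\kappa^*\xi^L\rangle$ into $\operatorname{div}_\mu(\xi_M)$ plus a $\langle\pi,\d\,\cdot\,\rangle$ term that matches its $G^*$-counterpart because $\kappa$ is Poisson. The $G^*$-side via the Evens--Lu--Weinstein formula for $G^*$ and the exactness of $\chi_0^L$ is also handled correctly, as is your observation that $\vartheta_0$ kills the cobracket of $\gg^*$ so that $\vartheta_0^R$ is a genuine Poisson vector field; and your $ax{+}b$ coadjoint example is a valid check that properness cannot be dropped.

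The gap is the step you explicitly defer: the term $\operatorname{div}_\mu(\xi_M)$. This is not the ``main obstacle'' but the one place where properness enters, and it is dispatched in one line: a proper action admits a $G$-invariant volume form $\mu$ (average an arbitrary volume form over the group with a cutoff function --- freeness is not needed), and for such a $\mu$ one has $\Lie_{\xi_M}\mu=0$, i.e.\ $\operatorname{div}_\mu(\xi_M)=0$ identically for every $\xi\in\gg$. The $M$-side contribution therefore vanishes outright and there is no ``matching'' to perform. Your proposed substitute --- the volume form \eqref{eq:vol:form} built from $\mu_G$, $\phi^*\nu$ and the normalising function --- is the wrong tool here: that form is generally \emph{not} $G$-invariant (the Lie derivative of $\langle\mu_G,\xi^1_M\wedge\cdots\wedge\xi^d_M\rangle$ along $\xi_M$ produces exactly a $\vartheta_0(\xi)$ term unless $\gg$ is unimodular), it presupposes a free action and a quotient, and it would reintroduce the bookkeeping you are trying to avoid. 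Replace that paragraph by the averaging argument and your proof closes, coinciding with the paper's.
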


\begin{proof}
Since the action is proper, we can choose a $G$-invariant volume form $\mu\in\Omega^\top(M)$,
and we compute:
\begin{equation}
\label{eq:fundvf:divfree}
\langle \xi^L, \d \kappa \cdot X_\mu\rangle\mu=\Lie_{\xi_M}\mu +  \eval{\kappa^*\d\xi^L,\pi}\mu=\kappa^*\eval{\d\xi^L,\pi_{G^*}}\mu,\quad (\xi\in\gg).
\end{equation}
By a result of Evens, Lu and Weinstein (see \cite{ELW}), if $\al^L\in\Omega^1(G)$ is the left invariant form associated with $\al\in\gg^*$, we have:
\begin{equation}\label{eq:eval:Poisson:Lie:group}
\eval{\d\al^L,\pi_G}= \eval{\al^L, \chi_0^L-X_{\nu^L}},
\end{equation}
where $\nu^L$ is a left-invariant volume form in $G$. Dualizing to $G^*$, we conclude that:
\begin{equation}
\label{eq:mod:ham:action}
\d \kappa \cdot X_\mu=(-X_{\nu^L}+\vartheta_0^L)\smalcirc \kappa.
\end{equation}
Using formula \eqref{eq:mod:class:Poisso:Lie:grp} for the modular class of a Poisson-Lie group, it follows that:
\[
[\d \kappa \cdot X_\mu-X_{\nu^L}\circ \kappa]=[(-2X_{\nu^L}+\vartheta_0^L)\circ \kappa]=-[\vartheta_0^R\circ \kappa]
\]
and consequently $\mod(\kappa)=-\brr{\vartheta_0^R\circ \kappa}$.
\end{proof}

\begin{cor}
\label{cor:unimodular}
Let $G\times M\to M$ be a proper and free hamiltonian action of a Poisson-Lie group $G$ with moment map $\kappa:M\to G^*$. If $\gg$ is unimodular then:
\begin{enumerate}[(i)]
\item the moment map has vanishing modular class: $\mod{\kappa}=0$.
\item if $\mod(M)=0$, then $\mod(M/G)=0$.
\end{enumerate}
\end{cor}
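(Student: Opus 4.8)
The first statement is immediate from what has already been computed. By definition the adjoint character $\vartheta_0\in\gg^*$ of $\gg$ satisfies $\langle\vartheta_0,\xi\rangle=\tr(\ad_\gg\xi)$, so unimodularity of $\gg$ is precisely the condition $\vartheta_0=0$. Proposition~\ref{prop:mod:class:moment:map} then gives $\mod(\kappa)=-[\vartheta_0^R\circ\kappa]=0$, which proves (i). For (ii) the plan is to exhibit, for a well-chosen pair of volume forms, a modular vector field of $M/G$ that is genuinely hamiltonian. Since $\gg$ is unimodular the normalizing function $\langle\mu_G,\xi^1_M\wedge\cdots\wedge\xi^d_M\rangle$ is constant, so the volume form $\mu$ of Corollary~\ref{cor:mod:action} may be taken $G$-invariant; then $X_\mu$ is $\phi$-projectable and its projection $X_\nu:=\phi_*X_\mu$ represents $\mod(M/G)$. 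The hypothesis $\mod(M)=0$ lets us write $X_\mu=X_h$ for some, a priori non-invariant, $h\in C^\infty(M)$, and the whole strategy is to correct $h$ by a function pulled back from $G^*$ so that it becomes $G$-invariant without disturbing the projection to $M/G$.

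The correction is dictated by the moment map. First I would observe that, because $\gg$ is unimodular, the modular vector field $X_{\nu^L}$ of $G^*$ occurring in \eqref{eq:mod:ham:action} is in fact hamiltonian: applying the Evens--Lu--Weinstein formula \eqref{eq:mod:class:Poisso:Lie:grp} to $G^*$ and using $\vartheta_0=0$ reduces it to $X_{\nu^L}=\tfrac12\pi_{G^*}^\sharp(\chi_0^L)$, and the dual of the exactness noted in Example~\ref{ex:Poisson-Lie} shows that $\chi_0^L=\d\ln\det\Ad_{G^*}$ is exact, whence $X_{\nu^L}=X_F$ with $F:=\tfrac12\ln\det\Ad_{G^*}\in C^\infty(G^*)$. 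With $\vartheta_0=0$, formula \eqref{eq:mod:ham:action} becomes $\d\kappa\cdot X_\mu=-X_F\circ\kappa$. A direct pairing computation, using $X_\mu=X_h$, the moment map condition \eqref{eq:moment:map}, and the fact that $\kappa$ is Poisson, then yields for every $\xi\in\gg$
\[
\xi_M(h)=\langle\xi^L,X_{\nu^L}\rangle\circ\kappa=-\,\xi_M(F\circ\kappa).
\]
Hence $\bar h:=h+F\circ\kappa$ satisfies $\xi_M(\bar h)=0$ for all $\xi$, i.e.\ $\bar h$ is $G$-invariant and descends to a function $\tilde h$ on $M/G$.

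To conclude I would show that the correction term $X_{F\circ\kappa}$ is tangent to the orbits. Writing $\d F=\sum_i c_i\,\xi_i^L$ in the left-invariant coframe of $G^*$ and pulling back, the moment map condition \eqref{eq:moment:map} gives $X_{F\circ\kappa}=\pi_M^\sharp(\kappa^*\d F)=\sum_i (c_i\circ\kappa)\,(\xi_i)_M$, which is $\phi$-vertical, so $\phi_*X_{F\circ\kappa}=0$. Since $X_\mu=X_h=X_{\bar h}-X_{F\circ\kappa}$ and $\bar h$ is basic, projecting gives $X_\nu=\phi_*X_\mu=\phi_*X_{\bar h}=X_{\tilde h}$, a hamiltonian vector field on $M/G$. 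Therefore $\mod(M/G)=[X_\nu]=0$, as claimed.

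The step I expect to be the main obstacle is the sign-correct verification that $\bar h$ is $G$-invariant, since it requires carefully combining \eqref{eq:mod:ham:action} with the moment map identity and the Poisson property of $\kappa$. It is exactly here that unimodularity of $\gg$ is indispensable: for $\vartheta_0\neq0$ the vector field $X_{\nu^L}$ of $G^*$ acquires the non-hamiltonian part $\tfrac12(\vartheta_0^L+\vartheta_0^R)$, so it can no longer be written as $X_F$ and $h$ cannot be rendered $G$-invariant by a pullback from $G^*$; this is consistent with the parenthetical warning that $\mod(M,G)$ itself need not vanish even when $\mod(M/G)=0$.
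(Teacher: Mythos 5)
Your proof is correct and follows essentially the same route as the paper: your correction term $X_{F\circ\kappa}$ with $F=\tfrac12\ln\det\Ad_{G^*}$ is exactly the paper's $\tfrac12(\chi_0)_M$, and both arguments hinge on the unimodular specialization of \eqref{eq:mod:ham:action} to show that the corrected vector field $X_\mu+\tfrac12(\chi_0)_M=X_{h+F\circ\kappa}$ lies in $\ker\d\kappa$ and hence admits a $G$-invariant hamiltonian. The only (cosmetic) difference is that you verify $G$-invariance of the function $\bar h$ by a direct pairing computation, where the paper verifies $G$-invariance of the vector field via \eqref{eq:invariance:modular} and then deduces invariance of the hamiltonian from tangency to the moment fibers.
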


\begin{proof}
Part (i) follows immediately from the proposition. To prove part (ii) we start by observing that since $\gg$ is unimodular, if $\mu$ is a $G$-invariant form, equation  \eqref{eq:mod:class:Poisso:Lie:grp} shows that:
\[ \d \kappa\cdot X_\mu=-X_{\nu^L}=-\frac{1}{2}\pi_{G^*}^\sharp (\chi_0)^L=-\frac{1}{2}\d \kappa \cdot(\chi_0)_M \]
By \eqref{eq:invariance:modular}, the vector field $X_\mu+\frac{1}{2}(\chi_0)_M$ is $G$-invariant. On the other hand, the 1-form $(\chi_0)^L\in\Omega^1(G^*)$ is exact (see Example \ref{ex:Poisson-Lie}), so $(\chi_0)_M$ is a hamiltonian vector field. Therefore, the vector field  $X_\mu+\frac{1}{2}(\chi_0)_M$ has the following properties:
\begin{enumerate}[(a)]
\item it is $G$-invariant  and lies in the kernel of the moment map;
\item it is a Poisson vector field (it is a sum of Poisson vector fields);
\item it represents a class in $H^1_{\proj,\pi}(M)$ which projects to $\mod(M/G)$ (since $X_\mu$ projects to a representative of this modular class and $(\chi_0)_M$ projects to zero).
\end{enumerate}
When $\mod(M)=0$ we conclude that $X_\mu+\frac{1}{2}(\chi_0)_M$ is a hamiltonian vector field, which by (a) has a $G$-invariant hamiltonian. This shows that $\mod(M/G)=0$.
\end{proof}

\begin{rem}
Notice that in case (ii)
it may still happen that $\mod(M,G)\not=0$. The proof only shows that the combination $X_\mu+\frac{1}{2}(\chi_0)_M$ admits a $G$-invariant hamiltonian and this may not be the case for $X_\mu$.
\end{rem}

Let $G\times M\to M$ be a hamiltonian action for which the identity $e\in G^*$ is a regular value and the action
of $G$ on $\kappa^{-1}(e)$ is proper and free. The Dirac structure $L_\pi$ associated to $\pi$ pulls back to a Dirac structure on
the level set $\kappa^{-1}(e)$, which factors through the quotient map $\kappa^{-1}(e)\to \kappa^{-1}(e)/G$, yielding a Dirac structure which is
associated with a Poisson structure   $\pi_{\red}$. We shall refer to
$$M//G:=\kappa^{-1}(e)/G$$
with its natural reduced Poisson structure $\pi_{\red}$ as the \textbf{hamiltonian quotient} of $M$ by $G$ relative to the moment map $\kappa:M\to G^*$. When the action of $G$ on the whole manifold $M$ is proper and free, we obtain a commutative diagram:
\[
\xymatrix{
     &M\ar[dr]\\
\kappa^{-1}(e)\ar[ur]\ar[dr]& &M/G\\
 &M//G\ar[ur]}
\]
where the inclusions are backward Dirac maps and the projections are forward Dirac maps (see \cite{Bur} for details on Dirac structures and Dirac maps).

We will now study the relationship between the modular class of $M$, the modular class of $M/G$ and the modular class of $M//G$. The key result is the following:

\begin{thm}
\label{thm:mod:class:ham:quotient}
Let $G\times M\to M$ be a proper and free hamiltonian action of a Poisson-Lie group $G$ with moment map $\kappa:M\to G^*$ and let $\phi:M\to M/G$ be the quotient map. For any $G$-invariant fiberwise volume form $\mu_G\in\Omega^\top(\F_\phi)$, any volume form $\tau\in\Omega^\top(M//G)$, and  $\{\xi^1,\dots,\xi^d\}$ any basis of $\gg$, set:
\begin{equation}
\label{eq:vol:form:ham}
\bar{\mu}=\frac{\mu_G\wedge\phi^*\tau\wedge \kappa^*\xi_1^L\wedge\cdots\wedge \kappa^*\xi_d^L}{\langle\mu_G,\xi^1_M\wedge\cdots\wedge\xi^d_M\rangle}.
\end{equation}
The modular vector field $X_{\bar{\mu}}\in\X(M)$ is projectable and tangent to $\kappa^{-1}(e)$. Moreover, $\left.X_{\bar{\mu}}\right|_{\kappa^{-1}(e)}$ and $X_\tau\in\X(M//G)$ are $\phi$-related:
\[ \d\phi\cdot\left.X_{\bar{\mu}}\right|_{\kappa^{-1}(e)}=X_\tau\smalcirc\phi.\]
\end{thm}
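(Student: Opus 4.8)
The plan is to measure $\bar\mu$ against a fixed $G$-invariant volume form and to reduce everything to one consequence of the moment map condition. Using properness, choose a $G$-invariant volume form $\mu_0\in\Omega^\top(M)$ and write $\bar\mu=h\,\mu_0$ with $h\in C^\infty(M)$ nowhere zero; by \eqref{eq:change:form} this gives $X_{\bar\mu}=X_{\mu_0}-X_{\ln|h|}$. Throughout set $\Lambda:=\kappa^*\xi_1^L\wedge\cdots\wedge\kappa^*\xi_d^L$ and $D:=\langle\mu_G,\xi^1_M\wedge\cdots\wedge\xi^d_M\rangle$; here $\phi^*\tau$ in \eqref{eq:vol:form:ham} means the pullback of any extension $\tilde\tau\in\Omega^{\dim M//G}(M/G)$ of $\tau$, and none of the assertions depend on this extension. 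The basic computation is that, for every $F\in C^\infty(M)$ and $\xi\in\gg$,
\begin{equation*}
\langle\kappa^*\xi^L,X_F\rangle=-\langle\d F,\pi_M^\sharp(\kappa^*\xi^L)\rangle=-\langle\d F,\xi_M\rangle=-\xi_M(F),\tag{$\star$}
\end{equation*}
by antisymmetry of $\pi_M^\sharp$ and \eqref{eq:moment:map}. Two consequences are used repeatedly: if $F$ is $G$-invariant then $\d\kappa\cdot X_F=0$, so $X_F$ is tangent to every level set of $\kappa$ and $\Lie_{X_F}\Lambda=0$; and, since $\kappa^*\xi^L|_{T\kappa^{-1}(e)}=0$, a vector field $Z$ is tangent to $\kappa^{-1}(e)$ if and only if $\langle\kappa^*\xi^L,Z\rangle=0$ on $\kappa^{-1}(e)$ for all $\xi\in\gg$.

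First I would prove projectability. For $X_{\mu_0}$ this is immediate from \eqref{eq:invariance:modular}, whose right-hand side is a sum of fundamental vector fields, hence vertical. For $X_{\ln|h|}$, write $\Lie_{\xi_M}X_{\ln|h|}=(\Lie_{\xi_M}\pi_M)^\sharp\d\ln|h|+X_{\xi_M(\ln|h|)}$; the first term is vertical because for a Poisson action $\Lie_{\xi_M}\pi_M$ lies in the image of $\wedge^2\gg\to\X^2(M)$, $\eta\wedge\zeta\mapsto\eta_M\wedge\zeta_M$. For the second term one computes $\xi_M(\ln|h|)$: as $\mu_G$, $\phi^*\tilde\tau$ and $\mu_0$ are $G$-invariant, only $\Lambda$ and $D$ contribute, and both $\Lie_{\xi_M}\Lambda=(c_\xi\smalcirc\kappa)\,\Lambda$ (with $c_\xi$ the divergence relative to $\xi_1^L\wedge\cdots\wedge\xi_d^L$ of the infinitesimal dressing field $\pi_{G^*}^\sharp\xi^L$ on $G^*$, since $\xi_M$ is $\kappa$-related to it) and $\xi_M(\ln|D|)=-\langle\vartheta_0,\xi\rangle$ are, respectively, a pullback under $\kappa$ and a constant. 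Hence $\xi_M(\ln|h|)=\kappa^*(\,\cdot\,)+\text{const}$, so $X_{\xi_M(\ln|h|)}=\pi_M^\sharp\kappa^*(\,\cdot\,)$ is again a combination of the $\xi_M$, i.e. vertical. Therefore $\Lie_{\xi_M}X_{\bar\mu}$ is vertical for all $\xi$ and $X_{\bar\mu}$ is projectable.

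The heart of the argument is tangency to $\kappa^{-1}(e)$. For $X_{\mu_0}$ I would use \eqref{eq:mod:ham:action}: $\d\kappa\cdot X_{\mu_0}=(-X_{\nu^L}+\vartheta_0^L)\smalcirc\kappa$, where $X_{\nu^L}$ is the modular vector field of $G^*$. Applying the Evens--Lu--Weinstein formula \eqref{eq:mod:class:Poisso:Lie:grp} to $G^*$ (whose relevant adjoint character is $\vartheta_0$) at the identity $e$---where $\pi_{G^*}=0$ and the left- and right-invariant fields coincide---gives $X_{\nu^L}(e)=\vartheta_0=\vartheta_0^L(e)$, so $\d\kappa\cdot X_{\mu_0}=0$ on $\kappa^{-1}(e)$. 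For $X_{\ln|h|}$, by $(\star)$ tangency is equivalent to $\xi_M(\ln|h|)=0$ on $\kappa^{-1}(e)$. From the previous paragraph $\xi_M(\ln|h|)=c_\xi\smalcirc\kappa+\langle\vartheta_0,\xi\rangle$, and since the dressing field on $G^*$ vanishes at $e$ with linearization the negative coadjoint action, $c_\xi(e)=-\langle\vartheta_0,\xi\rangle$; the two terms cancel on $\kappa^{-1}(e)$. Thus $X_{\bar\mu}=X_{\mu_0}-X_{\ln|h|}$ is tangent to $\kappa^{-1}(e)$. I expect this cancellation to be the main obstacle: it is exactly here that the normalization by $D$ together with the transverse factor $\Lambda=\kappa^*(\xi_1^L\wedge\cdots\wedge\xi_d^L)$ makes $\bar\mu$ adapted to the constraint, and keeping the conventions (dressing action, the character $\vartheta_0$, the value at $e$) consistent is delicate.

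Finally I would identify the projection. By projectability and tangency, $\left.X_{\bar\mu}\right|_{\kappa^{-1}(e)}$ descends along $\phi$ to a vector field on $M//G$; to see it is $X_\tau$, test against $g\in C^\infty(M//G)$ through a $G$-invariant extension $\hat g\in C^\infty(M)^G$ of $g\smalcirc\phi$. Then $X_{\bar\mu}(\hat g)=\operatorname{div}_{\bar\mu}(X_{\hat g})$, and since $\hat g$ is $G$-invariant, $\Lie_{X_{\hat g}}\Lambda=0$ by $(\star)$. Writing $\bar\mu=\omega\wedge\Lambda$ with $\omega=D^{-1}\mu_G\wedge\phi^*\tilde\tau$ and using that $X_{\hat g}$ is tangent to $\kappa^{-1}(e)$ and preserves $\Lambda$, the ambient divergence restricts on $\kappa^{-1}(e)$ to the intrinsic one $\operatorname{div}_{\left.\omega\right|_{\kappa^{-1}(e)}}(\left.X_{\hat g}\right|_{\kappa^{-1}(e)})$. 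On $\kappa^{-1}(e)$ one has $\left.\omega\right|_{\kappa^{-1}(e)}=D^{-1}\mu_G\wedge\phi^*\tau$, which is precisely the adapted volume form of Corollary \ref{cor:mod:action} for the submersion $\phi\colon\kappa^{-1}(e)\to M//G$; as $\left.X_{\hat g}\right|_{\kappa^{-1}(e)}$ projects to the hamiltonian vector field of $g$ for $\pi_{\red}$, the computation proving Theorem \ref{thm:mod:class:quotient} applies verbatim and gives $\operatorname{div}_{\left.\omega\right|_{\kappa^{-1}(e)}}(\left.X_{\hat g}\right|_{\kappa^{-1}(e)})=X_\tau(g)\smalcirc\phi$. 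Since $g$ is arbitrary, $\d\phi\cdot\left.X_{\bar\mu}\right|_{\kappa^{-1}(e)}=X_\tau\smalcirc\phi$.
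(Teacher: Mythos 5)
Your proof is correct, and in substance it tracks the paper's argument, but the bookkeeping is genuinely different and worth comparing. The paper computes $\Lie_{\xi_M}\bar\mu$ directly and reads off $\d_x\kappa\cdot X_{\bar\mu}=X_{\nu^L}(e)-\vartheta_0=0$ in one stroke; you instead split $X_{\bar\mu}=X_{\mu_0}-X_{\ln|h|}$ against a $G$-invariant reference form via \eqref{eq:change:form} and recycle Proposition \ref{prop:mod:class:moment:map} (i.e.\ \eqref{eq:mod:ham:action}) for the first summand, so the only new computation is $\xi_M(\ln|h|)=c_\xi\smalcirc\kappa+\langle\vartheta_0,\xi\rangle$ together with the evaluation $c_\xi(e)=-\langle\vartheta_0,\xi\rangle$. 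The cancellation you isolate is exactly the one in the paper, expressed as ``the trace of the linearized dressing field at $e$ is $-\vartheta_0$'' rather than ``$X_{\nu^L}(e)=\vartheta_0$'' via \eqref{eq:mod:class:Poisso:Lie:grp}; these are the same statement. For the $\phi$-relatedness, writing $\bar\mu=\omega\wedge\Lambda$ and reducing the ambient divergence of $X_{\hat g}$ to the intrinsic divergence on $\kappa^{-1}(e)$ with respect to $D^{-1}\mu_G\wedge\phi^*\tau$, and then invoking the computation of Theorem \ref{thm:mod:class:quotient} for the submersion $\kappa^{-1}(e)\to M//G$, is a clean way to avoid redoing \eqref{eq:mod:class:quotient:2}--\eqref{eq:mod:class:quotient:3}; the paper essentially redoes them on $\kappa^{-1}(e)$, which amounts to the same thing. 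Two further remarks. You prove projectability of $X_{\bar\mu}$ on all of $M$ explicitly, which the statement asserts but the paper's proof does not address; that is a genuine improvement. On the other hand, your appeal to \eqref{eq:invariance:modular} for the invariant form $\mu_0$ is a slight misquotation, since that formula is stated for the non-invariant form \eqref{eq:vol:form}; this is harmless, because all you use is that the right-hand side is a combination of fundamental vector fields, which holds for any $G$-invariant volume form by $\Lie_{\xi_M}\pi_M=-(\d_e\pi_G\cdot\xi)_M$.
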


\begin{rem}
Some explanations about the statement of this theorem are in
order. First, we do not distinguish between a volume form $\tau\in\Omega^\top(M//G)$ and an extension to a form defined in $M/G$ whose pullback by $ M//G\hookrightarrow M/G$ is $\tau$. This is fine since we are only interested in the relative modular class of $M//G$ in $M/G$ (see Remark \ref{rem:relative:class}). Second, in general, the form $\kappa^*\xi_1^L\wedge\cdots\wedge \kappa^*\xi_d^L$ is not basic, so the volume form \eqref{eq:vol:form:ham} is quite different from the volume form \eqref{eq:vol:form}, and $X_{\bar{\mu}}$ does not represent $\mod(M,G)$.
\end{rem}

\begin{rem} Theorem \ref{thm:mod:class:ham:quotient} can also be explained in terms of vanishing of the modular class of a map, just like the case of Poisson quotients in the previous section: the quotient map $\kappa^{-1}(e)\to \kappa^{-1}(e)/G=M//G$ is a forward Dirac map and has vanishing modular class. A long exact sequence of the sort \eqref{short:seq:submersion:cohom} then yields Theorem \ref{thm:mod:class:ham:quotient}. However, since this would require a discussion of modular classes of Dirac manifolds, we will not pursue this issue here and refer the reader for an upcoming publication.
\end{rem}

\begin{proof}[Proof of Theorem \ref{thm:mod:class:ham:quotient}]
Set $\nu^L:=\xi_1^L\wedge\cdots\wedge \xi_d^L$. First observe that
\begin{align*}
\Lie_{\xi_M}\bar\mu &= \mu_G\wedge \phi^*\tau\wedge \Lie_{\xi_M}\frac{\kappa^*\nu^L}{\langle\mu_G,\xi^1_M\wedge\cdots\wedge\xi^d_M\rangle }\\
&= \mu_G\wedge \phi^*\tau\wedge \left(\frac{\kappa^*\Lie_{\pi_{G^*}^\sharp \xi^L}\nu^L}{\langle\mu_G,\xi^1_M\wedge\cdots\wedge\xi^d_M\rangle}-\frac{\Lie_{\xi_M}\langle\mu_G,\xi^1_M\wedge\cdots\wedge\xi^d_M\rangle}{(\langle\mu_G,\xi^1_M\wedge\cdots\wedge\xi^d_M\rangle)^2} \kappa^*\nu^L\right)\\
&=\left(\eval{X_{\nu^L}\smalcirc\kappa,\xi^L} -\kappa^*\eval{\d\xi^L, \pi_{G^*}}-\vartheta_0(\xi)\right)\bar\mu.
\end{align*}
Hence, for $x\in\kappa^{-1}(e)$, we have
$\d_x\kappa\cdot X_{\bar\mu}=X_{\nu^L}(e)-\vartheta_0=0$ and we conclude that $X_{\bar\mu}$ is tangent to $\kappa^{-1}(e)$.

To see that $\left.X_{\bar{\mu}}\right|_{\kappa^{-1}(e)}$ is $\phi$-related to $X_\tau$, we notice that, for $f\in C^\infty(M/G)$,
 \begin{align*}
 \Lie_{{\pi^\sharp(\phi^*\d f)}}\phi^*\tau&=X_\tau(f)\phi^*\tau,\\
 \Lie_{{\pi^\sharp(\phi^*\d f)}}\kappa^*\nu^L&=0, \\
\ds \left(\Lie_{\pi^\sharp(\phi^*\d f)}\mu_G\right)\wedge\phi^*\tau\wedge \kappa^*\nu^L{|_{\kappa^{-1}(e)}}&=-X_{\ln|\langle\mu_G,\xi^1_M\wedge\cdots\wedge\xi^d_M\rangle|}(f\smalcirc\phi)\bar\mu{|_{\kappa^{-1}(e)}}.
\end{align*}
Now, it follows simply from the definition of $X_{\bar\mu}$ that
\[
X_{\bar\mu}(f\smalcirc\phi) |_{\kappa^{-1}(e)}= X_{\tau}(f)|_{\kappa^{-1}(e)},
\]
and since we already know that $X_{\bar\mu}$ is tangent to $\kappa^{-1}(e)$ the result follows.
\end{proof}

\begin{prop}
The relative modular class of $ M//G\hookrightarrow M/G$ is trivial if and only if $\gg$ is unimodular.
\end{prop}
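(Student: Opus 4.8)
The plan is to exhibit an explicit representative of $\mod_{M//G}(M/G)$ and to read off its component normal to $M//G$, which will turn out to be the adjoint character $\vartheta_0$ of $\gg$. First I would fix the data of Theorem \ref{thm:mod:class:ham:quotient}: a $G$-invariant fibrewise volume form $\mu_G$, a basis $\{\xi^1,\dots,\xi^d\}$ of $\gg$, a volume form $\tau$ on $M//G$ and a volume form $\nu$ on $M/G$. These produce, on the one hand, the volume form $\bar\mu$ of \eqref{eq:vol:form:ham}, whose modular vector field $X_{\bar\mu}$ is tangent to $\kappa^{-1}(e)$ and satisfies $\d\phi\cdot X_{\bar\mu}|_{\kappa^{-1}(e)}=X_\tau\smalcirc\phi$, and on the other hand the volume form $\mu$ of \eqref{eq:vol:form}, whose modular vector field $X_\mu$ projects under $\phi$ to the modular vector field $X_\nu$ of $M/G$ (Theorem \ref{thm:mod:class:quotient} and Corollary \ref{cor:mod:action}). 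By Remark \ref{rem:relative:class}, $\mod_{M//G}(M/G)$ is represented by the vector field along $M//G$ given by $R:=X_\nu|_{M//G}-X_\tau$. Writing $\bar\mu=g\,\mu$ and using \eqref{eq:change:form}, one has $X_\mu-X_{\bar\mu}=X_{\ln|g|}$, a hamiltonian vector field on $M$, and since $X_\nu$ and $X_\tau$ are the $\phi$-projections of $X_\mu$ and of $X_{\bar\mu}|_{\kappa^{-1}(e)}$, the identity $R=\d\phi\cdot X_{\ln|g|}\big|_{\kappa^{-1}(e)}$ holds as vector fields along $M//G$. The whole computation thus reduces to understanding $\ln|g|$ along $\kappa^{-1}(e)$.

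The key step is the computation of $\Lie_{\xi_M}\ln|g|$ along $\kappa^{-1}(e)$. Since the numerator and denominator of $\mu$ in \eqref{eq:vol:form} are $G$-invariant except for the factor $\langle\mu_G,\xi^1_M\wedge\cdots\wedge\xi^d_M\rangle$, using $\Lie_{\xi_M}(\xi^1_M\wedge\cdots\wedge\xi^d_M)=-\tr(\ad_\xi)\,\xi^1_M\wedge\cdots\wedge\xi^d_M$ and $\langle\vartheta_0,\xi\rangle=\tr(\ad_\gg\xi)$ (Example \ref{ex:Poisson-Lie}) one gets $\Lie_{\xi_M}\mu=\langle\vartheta_0,\xi\rangle\,\mu$. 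On the other hand, the computation already carried out in the proof of Theorem \ref{thm:mod:class:ham:quotient} gives
\[
\Lie_{\xi_M}\bar\mu=\left(\eval{X_{\nu^L}\smalcirc\kappa,\xi^L}-\kappa^*\eval{\d\xi^L,\pi_{G^*}}-\langle\vartheta_0,\xi\rangle\right)\bar\mu .
\]
Subtracting and evaluating on $\kappa^{-1}(e)$, where $\pi_{G^*}(e)=0$ and the modular vector field of $G^*$ satisfies $X_{\nu^L}(e)=\vartheta_0$ (by \eqref{eq:mod:class:Poisso:Lie:grp} applied to $G^*$, exactly as used in that proof), I obtain $\Lie_{\xi_M}\ln|g|\big|_{\kappa^{-1}(e)}=-\langle\vartheta_0,\xi\rangle$. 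Equivalently, since the conormal direction to $M//G$ is detected by $\d\kappa$ and $\d\phi\cdot X_{\bar\mu}$ is tangent, one computes via the moment map condition \eqref{eq:moment:map} that $\eval{\xi^L,\d\kappa\cdot X_\mu}\big|_{\kappa^{-1}(e)}=-\Lie_{\xi_M}\ln|g|\big|_{\kappa^{-1}(e)}=\langle\vartheta_0,\xi\rangle$; that is, the component of $R$ normal to $M//G$ equals the constant $\vartheta_0\in\gg^*$ under the identification of the normal bundle with $\kappa^{-1}(e)\times_G\gg^*$.

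From here both implications follow. A coboundary in the restricted complex is the restriction to $M//G$ of a hamiltonian vector field on $M/G$, which is necessarily tangent to the Poisson submanifold $M//G$; hence if $\mod_{M//G}(M/G)=0$ then $R$ is tangent to $M//G$, its normal component $\vartheta_0$ vanishes, and $\gg$ is unimodular. Conversely, if $\gg$ is unimodular then $\vartheta_0=0$, so $\eval{\xi^L,\d\kappa\cdot X_{\ln|g|}}$ vanishes on $\kappa^{-1}(e)$, i.e.\ $X_{\ln|g|}$ is tangent to $\kappa^{-1}(e)$, and $\Lie_{\xi_M}\ln|g|=0$ there; thus $\ln|g|$ is $G$-invariant along the level set, descends to a function $\overline{\ln|g|}$ on $M//G$, and the standard reduction of hamiltonians identifies $R=\d\phi\cdot X_{\ln|g|}\big|_{\kappa^{-1}(e)}$ with the hamiltonian vector field of $\overline{\ln|g|}$ for $\pi_\red$. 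Therefore $R$ is exact in the restricted complex and $\mod_{M//G}(M/G)=0$.

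The main obstacle is the middle paragraph: one must carefully track the two different normalisations in \eqref{eq:vol:form} and \eqref{eq:vol:form:ham} — the first engineered so that $X_\mu$ projects to the modular vector field of $M/G$, the second so that $X_{\bar\mu}|_{\kappa^{-1}(e)}$ projects to that of $M//G$ — and verify that their discrepancy along $\kappa^{-1}(e)$ is governed precisely by $\vartheta_0$, and hence that $X_\mu$ is tangent to $\kappa^{-1}(e)$ exactly when $\gg$ is unimodular. The input $X_{\nu^L}(e)=\vartheta_0$ for the Poisson--Lie group $G^*$, together with $\pi_{G^*}(e)=0$, is what makes the two contributions cancel down to $-\langle\vartheta_0,\xi\rangle$.
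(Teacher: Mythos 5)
Your proof is correct and follows essentially the same route as the paper's: both compare the volume forms \eqref{eq:vol:form} and \eqref{eq:vol:form:ham}, write their ratio as $e^{\ln|g|}$ (the paper's $H_{\tau,\bar\tau}$), represent the relative modular class by $\d\phi\cdot X_{\ln|g|}\big|_{\kappa^{-1}(e)}$, and identify its normal component with $\vartheta_0$ via $\d\kappa\cdot X_{\ln|g|}\big|_{\kappa^{-1}(e)}=\vartheta_0$. Your derivation of that identity through $\Lie_{\xi_M}\mu$ and $\Lie_{\xi_M}\bar\mu$ is just a repackaging of the paper's use of the tangency of $X_{\bar\mu}$, so there is nothing substantively different to compare.
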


\begin{proof} Let $\bar\mu$ be the volume form defined by \eqref{eq:vol:form:ham} and $\mu=\frac{\mu_G\wedge\phi^*{\bar\tau}}{\langle\mu_G,\xi^1_M\wedge\cdots\wedge\xi^d_M\rangle}$, for some volume form $\bar\tau\in\Omega^\top(M/G)$. Consider  the function  $H_{\tau,\bar\tau} \in C^\infty(M)$  such that $\bar\mu=H_{\tau,\bar\tau}\mu.$ Since $X_\mu$ and $X_{\bar\tau}$ are $\phi$-related (see Theorem \ref{thm:mod:class:quotient}), as well as $\left.X_{\bar\mu}\right|_{\kappa^{-1}(e)}$ and $\left.X_{\bar\tau}\right|_{\kappa^{-1}(e)/G}$ (see Theorem \ref{thm:mod:class:ham:quotient}),
the relative modular vector field associated to the volume forms $\tau$ and $\bar\tau$ is given by
\[X_{\tau,\bar\tau}=\left.X_{\bar\tau}\right|_{\kappa^{-1}(e)/G}-X_\tau=\left.\d\phi\cdot X_\mu\right|_{\kappa^{-1}(e)}-\left.\d\phi\cdot X_{\bar\mu}\right|_{\kappa^{-1}(e)}=\left.\d\phi\cdot X_{\ln |H_{\tau,\bar\tau}|}\right|_{\kappa^{-1}(e)}.
\]
Also, since $X_{\bar\mu}$ is tangent to $\kappa^{-1}(e)$, we have that
\begin{equation}
\label{eq:log}
\left.\d\kappa\cdot X_{\ln |H_{\tau,\bar\tau}|}\right|_{\kappa^{-1}(e)}=-\d\kappa\cdot X_{\mu}|_{\kappa^{-1}(e)}=\vartheta_0.
\end{equation}

Now, if the relative modular class is trivial, then we can choose
$\tau$ and $\bar\tau$ such that $X_{\tau}=X_{\bar\tau}|_{M//G}$,
i.e., such that ${X_{\ln |H_{\tau,\bar\tau}|}}|_{\kappa^{-1}(e)}$
is vertical and consequently $X_{\ln |H_{\tau,\bar\tau}|}$
is  tangent to $\kappa^{-1}(e)$. It follows from \eqref{eq:log} that $\gg$ must be unimodular.

Conversely, if $\gg$ is unimodular then $X_{\ln |H_{\tau,\bar\tau}|}$ is tangent to $\kappa^{-1}(e)$ and we conclude from \eqref{eq:log} that:
\[ \Lie_{\xi_M}\ln |H_{\tau,\bar\tau}|=-\eval{\xi^L,\d\kappa \cdot X_{\ln |H_{\tau,\bar\tau}|}}=0,\]
i.e., the function $\left(\ln |H_{\tau,\bar\tau}|\right)|_{\kappa^{-1}(e)}$ is $G$-invariant. This implies that the relative modular vector field $X_{\tau,\bar\tau}$ is hamiltonian and consequently the relative modular class is trivial.
\end{proof}



Just as the projectable Poisson cohomology, which contained the
$G$-invariant Poisson cohomology, played a crucial role in the
question of relating $\mod(M)$ and $\mod(M/G)$, we will see that there
is a certain cohomology, containing the $G$-equivariant Poisson
cohomology, introduced by Ginzburg in \cite{G1}, which plays an
analogous role in the problem of relating $\mod(M)$ and $\mod(M//G)$.

We start by recalling the definition of the $G$-equivariant Poisson cohomology. Let $G\times M\to M$ be a hamiltonian action with moment map $\kappa:M\to G^*$. The moment map gives a Lie algebra homomorphism
$\tilde{\kappa}:\gg\to\Omega^1(M)$ by setting:
\[ \tilde{\kappa}(\xi)=\kappa^*\xi^L\qquad (\xi\in\gg). \]
This is called by Ginzburg the \emph{pre-momentum map}. It gives rise to a $\gg$-module structure on $\X^\bullet(M)$ by setting:
\[ \xi\cdot P=\Lie_{\tilde{\kappa}(\xi)}P \qquad (\xi\in\gg),\]
where we use the Lie derivative operator of the Lie algebroid $T^*M$. Then the Poisson differential $\d_\pi:\X^\bullet(M)\to\X^{\bullet+1}(M)$
and the degree $-1$ operator:
\[ i_\xi:\X^\bullet(M)\to\X^{\bullet-1}(M),\ i_\xi P:=i_{\tilde{\kappa}(\xi)} P \qquad (\xi\in\gg),\]
turns $\X^\bullet(M)$ into a $\gg$-differential complex. Since one has a $\gg$-differential complex, one can proceed as usual to define the
equivariant cohomology via either the Cartan model or the Weil model: The \textbf{equivariant Poisson cohomology} of a hamiltonian $G$-manifold $(M,\pi)$ is the cohomology of the complex $(\X^\bullet_G(M),\d_{\pi,G})$ and will be denoted by $H_{\pi,G}(M)$.

This construction of the equivariant Poisson cohomology is valid for any hamiltonian action. For proper and free actions we have the following interpretation, also due to Ginzburg:

\begin{thm}[Ginzburg \cite{G1}]
If $G\times M\to M$ is a proper and free hamiltonian action of a Poisson-Lie group $G$ with
moment map $\kappa:M\to G^*$, then the equivariant Poisson cohomology coincides with the cohomology of the subcomplex $\X^\bullet(k)^G\subset \X^\bullet(M)$ formed by the $G$-invariant multivector fields tangent to the $\kappa$-fibers.
\end{thm}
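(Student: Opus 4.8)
The plan is to recognize this as the algebraic incarnation of the classical fact that for a \emph{free} action the equivariant cohomology computes the cohomology of the quotient, the latter being computed here by the basic subcomplex. By construction $(\X^\bullet(M),\d_\pi,i_\xi,\xi\cdot)$ is a $\gg$-differential complex, the Cartan relations following from the fact that $\tilde{\kappa}\colon\gg\to\Omega^1(M)$ is a Lie algebra homomorphism into the cotangent Lie algebroid (the moment map condition), together with the standard Lie algebroid identities $[\Lie^{T^*M}_\al,i_\be]=i_{[\al,\be]}$ and $\Lie^{T^*M}_\al=i_\al\d_\pi+\d_\pi i_\al$. The first step is then to identify $\X^\bullet(\kappa)^G$ with the \textbf{basic} subcomplex of this $\gg$-differential complex, i.e.\ the elements $P$ that are both \emph{horizontal} ($i_\xi P=0$ for all $\xi\in\gg$) and \emph{invariant} ($\xi\cdot P=0$ for all $\xi\in\gg$); that $\d_\pi$ preserves this subcomplex is automatic in any $\gg$-differential algebra.

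For horizontality: since the left-invariant forms $\xi^L$ span $T^*G^*$, their pullbacks $\tilde{\kappa}(\xi)=\kappa^*\xi^L$ span the conormal bundle $\nu^*(\F_\kappa)$ of the moment-map fibration, and a multivector field lies in $\wedge^\bullet T\F_\kappa$ precisely when it is annihilated by contraction with every conormal covector. Hence $i_\xi P=0$ for all $\xi$ is equivalent to $P$ being tangent to the $\kappa$-fibres. For invariance I would show that the two Lie derivatives $\xi\cdot=\Lie^{T^*M}_{\kappa^*\xi^L}$ and $\Lie_{\xi_M}$ \emph{agree on fibre-tangent fields}. Both are degree zero derivations of $(\X^\bullet(M),\wedge)$ and they coincide on $C^\infty(M)$ (using the anchor $\rho(\kappa^*\xi^L)=\pi^\sharp(\kappa^*\xi^L)=\xi_M$), so their difference $D_\xi$ is $C^\infty(M)$-linear and determined by its value on vector fields. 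A short computation gives $D_\xi Y=-\pi^\sharp(i_Y\kappa^*\d\xi^L)$; since $\kappa^*\d\xi^L$ is, by Maurer--Cartan, a combination of wedges of the conormal forms $\kappa^*\eta^L$, it follows that $i_Y\kappa^*\d\xi^L=0$ whenever $Y$ is tangent to the fibres, so $D_\xi$ vanishes on all fibre-tangent multivector fields. Consequently, among horizontal elements, $\xi\cdot P=0$ for all $\xi$ is equivalent to $\Lie_{\xi_M}P=0$, i.e.\ to $G$-invariance, and the basic subcomplex is exactly $\X^\bullet(\kappa)^G$.

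It then remains to prove that the equivariant cohomology equals the basic cohomology, and this is where freeness and properness enter: I would show that the $\gg$-differential complex admits a \emph{connection}, i.e.\ is of type (C). Because the action is free, the forms $\kappa^*\xi^L$ (equivalently the generators $\xi_M=\pi^\sharp\kappa^*\xi^L$) are pointwise linearly independent, so one can choose vector fields $\theta^a\in\X^1(M)$ dual to a basis $\{\xi_b\}$ of $\gg$, $i_{\xi_b}\theta^a=\langle\theta^a,\kappa^*\xi_b^L\rangle=\delta^a_b$; properness guarantees that such a frame can be chosen globally and, viewing $M\to M/G$ as a principal bundle (patching with a partition of unity on the base, or averaging a local choice against a cutoff function), made equivariant for the $\gg$-action. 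With a connection in hand one invokes the standard theorem on $\gg$-differential algebras (Guillemin--Sternberg, Kalkman): an explicit connection-built chain homotopy (the Chern--Weil/horizontal-projection map) identifies the Cartan model $(\X^\bullet_G(M),\d_{\pi,G})$ with the basic subcomplex, yielding $H_{\pi,G}(M)\cong H(\X^\bullet(\kappa)^G)$.

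The main obstacle is twofold. First, the identification of the two notions of invariance is the genuinely Poisson-geometric input: a priori $\Lie^{T^*M}_{\kappa^*\xi^L}\neq\Lie_{\xi_M}$ (they differ by the curvature-type term $D_\xi$ coming from $\kappa^*\d\xi^L$, which is nonzero as soon as $\gg^*$ is nonabelian), and the argument only works because this discrepancy dies on the horizontal, fibre-tangent part where the whole statement lives. Second, since $G$ is allowed to be non-compact one cannot simply average over $G$ to produce the equivariant connection; here properness is essential, and one must be careful because the relevant $\gg$-action $\xi\cdot=\Lie^{T^*M}_{\kappa^*\xi^L}$ need not exponentiate to the geometric $G$-action, so the cleanest route is to build the connection on the principal bundle $M\to M/G$ directly rather than by averaging.
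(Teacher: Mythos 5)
This theorem is quoted from Ginzburg \cite{G1}; the paper supplies no proof of it, so your proposal can only be judged on its own terms. The first two steps of your argument are correct and do capture the genuinely Poisson-geometric content. Freeness forces the $\xi_M=\pi^\sharp(\kappa^*\xi^L)$ to be pointwise independent, hence the $\kappa^*\xi^L$ frame the conormal bundle of the (submersion) $\kappa$, so horizontality $i_\xi P=0$ is indeed tangency to the $\kappa$-fibers. Your computation that $\Lie^{T^*M}_{\kappa^*\xi^L}Y-\Lie_{\xi_M}Y=\pm\,\pi^\sharp\bigl(i_Y\kappa^*\d\xi^L\bigr)$, together with the Maurer--Cartan equation on $G^*$ expressing $\d\xi^L$ in terms of wedges of left-invariant (hence, after pullback, conormal) forms, correctly shows that the two derivations agree on fiber-tangent multivector fields, so the basic subcomplex is exactly $\X^\bullet(\kappa)^G$. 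This is the right reduction, and the overall architecture (basic subcomplex plus the algebraic theorem on $\gg$-differential complexes) is the route Ginzburg takes.

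The gap is in the last step, the existence of the connection. A connection for this $\gg$-differential complex consists of vector fields $\theta^a\in\X^1(M)$ with $\langle\kappa^*\xi^L_b,\theta^a\rangle=\delta^a_b$, i.e.\ a splitting of $\d\kappa:TM\to\kappa^*TG^*$ --- \emph{not} a principal connection on $M\to M/G$. The two fibrations are different: $\ker\d\kappa$ has corank $d$, while the orbit distribution is spanned by the $\xi_M$ and is in general transverse to neither; so ``building the connection on the principal bundle $M\to M/G$'' constructs the wrong object. Worse, the equivariance that the Guillemin--Sternberg/Kalkman theorem requires is with respect to the algebroid action $\xi\cdot{}=\Lie^{T^*M}_{\kappa^*\xi^L}$, and the $\theta^a$ are by construction \emph{not} fiber-tangent, so on them this action differs from $\Lie_{\xi_M}$ by precisely the curvature term you computed. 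Consequently neither averaging over $G$ (unavailable anyway, $G$ non-compact) nor a partition of unity pulled back from $M/G$ visibly produces an invariant connection; you flag this difficulty yourself, but the fix you propose does not engage with it. To close the argument one must either show that the affine $\gg$-action on the space of splittings of $\kappa$ integrates to a $G$-action and use properness (slices and an invariant partition of unity on $M/G$) to glue invariant local splittings, or invoke a version of the algebraic theorem that tolerates a non-invariant connection, or fall back on Ginzburg's spectral-sequence argument. As written, the proof is incomplete at exactly this point.
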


We will need to consider an intermediate complex $\X^\bullet(\kappa)^G\subset \X^\bullet(\kappa)\subset \X^\bullet(M)$ which is the complex formed by all multivector fields which are tangent to the single fiber $\kappa^{-1}(e)$ and whose restriction to this submanifold is projectable. The corresponding cohomology will be denoted $H^\bullet_{\pi,\proj}(\kappa)$. The inclusion of complexes induces a map in cohomology:
\[ H_{\pi,G}^\bullet(M) \longrightarrow H^\bullet_{\pi,\proj}(\kappa),\]
which is injective in degree one. Moreover, we have a diagram
\begin{equation}
\label{diagram:eq:cohomology}
\xymatrix{
    & H^\bullet_{\pi}(M)\\
H^\bullet_{\pi,\proj}(\kappa)\ar[ur]\ar[dr]& \\
    & H^\bullet_{\pi}(M//G)}
\end{equation}
where the upper arrow is the forgetful morphism and the lower arrow is obtained by first restricting to $\kappa^{-1}(e)$ and then projecting to $M//G$.

Let us observe now that two volume forms of the type \eqref{eq:vol:form:ham} differ by a $G$-invariant function. Hence, for any such form $\bar{\mu}$ the Poisson vector filed $X_{\bar{\mu}}$ defines a cohomology class in $H^1_{\pi,\proj}(\kappa)$ and we set:

\begin{defn}
Let $G\times M\to M$ be a proper and free hamiltonian action of a Poisson-Lie group $G$ with moment map map $\kappa:M\to G^*$. The \textbf{equivariant modular class} of the hamiltonian action is the class:
\[ \mod_G(M)=[X_{\bar{\mu}}]\in H^1_{\pi,\proj}(\kappa),\]
where $\bar{\mu}\in\Omega^\top(M)$ is any volume form of type \eqref{eq:vol:form:ham}  in a neighborhood of $\kappa^{-1}(e)$.
\end{defn}

The fundamental result about the equivariant modular class, which motivates also its definition, is the following:

\begin{thm}
Let $G\times M\to M$ be a proper and free hamiltonian action of a Poisson-Lie group $G$. In the
diagram \eqref{diagram:eq:cohomology} the equivariant modular class
$\mod_G(M)$  is mapped  to the modular classes $\mod(M)$ and $\mod(M//G)$. In particular, if $\mod_G(M)=0$ then $\mod(M//G)=0$.
\end{thm}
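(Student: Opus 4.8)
The plan is to treat this theorem as a formal consequence of Theorem~\ref{thm:mod:class:ham:quotient} and the definitions of the two arrows in the diagram~\eqref{diagram:eq:cohomology}, after checking that the modular vector field $X_{\bar\mu}$ really represents $\mod_G(M)$ inside the intermediate complex $\X^\bullet(\kappa)$. So first I would recall that $\mod_G(M)=[X_{\bar\mu}]$, where $\bar\mu$ is a volume form of type~\eqref{eq:vol:form:ham} and $X_{\bar\mu}$ is the modular vector field of $(M,\pi)$ relative to $\bar\mu$. Theorem~\ref{thm:mod:class:ham:quotient} tells us that $X_{\bar\mu}$ is tangent to $\kappa^{-1}(e)$ and that its restriction to this level set is projectable; these are precisely the conditions defining $\X^1(\kappa)$, so $[X_{\bar\mu}]$ is a legitimate class in $H^1_{\pi,\proj}(\kappa)$, and its independence of the auxiliary data $\mu_G$, $\tau$ and the chosen basis of $\gg$ was already established just before the definition (two such forms differ by a $G$-invariant function).

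Next I would chase the class through the upper, forgetful arrow. This map simply regards a multivector field tangent to $\kappa^{-1}(e)$ as an element of $\X^\bullet(M)$, so it sends $[X_{\bar\mu}]$ to the class of $X_{\bar\mu}$ in $H^1_\pi(M)$. Since $X_{\bar\mu}$ is, by its very construction, a modular vector field of the Poisson manifold $M$, its Poisson cohomology class is $\mod(M)$. Thus the upper arrow carries $\mod_G(M)$ to $\mod(M)$ with no computation at all.

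For the lower arrow I would proceed analogously: it restricts a representative to $\kappa^{-1}(e)$ and then pushes it forward along $\phi\colon\kappa^{-1}(e)\to M//G$, sending $[X_{\bar\mu}]$ to $[\d\phi\cdot(X_{\bar\mu}|_{\kappa^{-1}(e)})]$. Here I invoke the second conclusion of Theorem~\ref{thm:mod:class:ham:quotient}, namely $\d\phi\cdot X_{\bar\mu}|_{\kappa^{-1}(e)}=X_\tau\smalcirc\phi$, which identifies the projected field with the modular vector field $X_\tau$ of $M//G$ relative to $\tau$. Hence the image is $[X_\tau]=\mod(M//G)$. The final assertion is then immediate: the lower arrow is a linear map of cohomology groups, so $\mod_G(M)=0$ forces $\mod(M//G)=0$.

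The logic of these three steps is short; the only genuine care lies in the bookkeeping that makes $X_{\bar\mu}$ a bona fide cocycle of $\X^\bullet(\kappa)$ and in interpreting $\bar\mu$ (a priori defined only near $\kappa^{-1}(e)$) so that the forgetful image is unambiguously $\mod(M)$. Since the tangency and $\phi$-relatedness — the real analytic content — are supplied by Theorem~\ref{thm:mod:class:ham:quotient}, and the well-definedness of the class was arranged in advance, this theorem amounts to assembling those ingredients rather than to any new estimate.
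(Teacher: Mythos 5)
Your argument is correct and is precisely the route the paper intends: the authors dispose of this theorem with the single remark that it ``should be clear using Theorem~\ref{thm:mod:class:ham:quotient}'', and your three steps (cocycle bookkeeping in $\X^\bullet(\kappa)$, the forgetful arrow giving $\mod(M)$, and the restriction-plus-projection arrow giving $\mod(M//G)$ via the $\phi$-relatedness of $X_{\bar\mu}|_{\kappa^{-1}(e)}$ and $X_\tau$) are exactly the details being left to the reader. No gap; same approach, just written out.
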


The proof should be clear using Theorem \ref{thm:mod:class:ham:quotient}. For a free and proper action of a compact Lie group $G$ acting by Poisson diffeomorphisms, a result of Ginzburg (see \cite{G1}) states that the forgetful morphism $H^1_{\pi,G}(M)\to H_{\pi}^1(M)$ is injective. Since in this case the equivariant modular class lies in $H^1_{\pi,G}(M)$ we deduce that:

\begin{cor}
Let $G\times M\to M$ be a free and proper hamiltonian action of a compact Lie group by Poisson diffeomorphisms. If $\mod(M)=0$ then $\mod(M//G)=0$.
\end{cor}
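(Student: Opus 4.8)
The plan is to reduce the corollary to a vanishing statement: I would show that the equivariant modular class $\mod_G(M)\in H^1_{\pi,\proj}(\kappa)$ is zero, after which the preceding theorem on the equivariant modular class applies verbatim, since in the diagram \eqref{diagram:eq:cohomology} the class $\mod_G(M)$ maps to $\mod(M//G)$. To prove the vanishing I would first extract the consequences of the two hypotheses. An action by Poisson diffeomorphisms forces $\pi_G=0$, so that $\gg^*$ is abelian and the adjoint character $\chi_0\in\gg$ of $\gg^*$ vanishes; compactness of $G$ makes $\gg$ unimodular, so that the adjoint character $\vartheta_0\in\gg^*$ of $\gg$ vanishes as well (Example \ref{ex:Poisson-Lie}). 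These two vanishings are what drive every computation below.

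The heart of the argument is to show that, under these hypotheses, $\mod_G(M)$ actually lies in the image of the injection $H^1_{\pi,G}(M)\to H^1_{\pi,\proj}(\kappa)$. I would fix a $G$-invariant volume form $\mu\in\Omega^\top(M)$ (available by properness and compactness) and argue that its modular vector field $X_\mu$ is a representative of $\mod_G(M)$ lying in the equivariant complex $\X^\bullet(\kappa)^G$ that computes $H^1_{\pi,G}(M)$. Two facts are needed. First, $X_\mu$ is $G$-invariant: formula \eqref{eq:invariance:modular} has right-hand side identically zero once $\chi_0=0$ and $\vartheta_0=0$, exactly the mechanism already used in Corollary \ref{cor:compact}. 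Second, $X_\mu$ is tangent to every fiber of $\kappa$: because $\pi_G=0$, the left-invariant forms $\xi^L$ on $G^*=\gg^*$ are closed, so $\d\xi^L=0$, and together with $\Lie_{\xi_M}\mu=0$ formula \eqref{eq:fundvf:divfree} collapses to $\langle\xi^L,\d\kappa\cdot X_\mu\rangle=0$ for all $\xi\in\gg$. Hence $X_\mu\in\X^1(\kappa)^G$.

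It then remains to identify $[X_\mu]$ with $\mod_G(M)$. I would verify that the volume form $\bar\mu$ of type \eqref{eq:vol:form:ham} is itself $G$-invariant here: with $\pi_G=0$ one has $\kappa^*\xi_1^L\wedge\cdots\wedge\kappa^*\xi_d^L=\kappa^*\mathrm{vol}_{\gg^*}$, the Lebesgue form $\mathrm{vol}_{\gg^*}$ is coadjoint-invariant by unimodularity of $\gg$, the factor $\mu_G\wedge\phi^*\tau$ is $G$-invariant, and the denominator $\langle\mu_G,\xi^1_M\wedge\cdots\wedge\xi^d_M\rangle$ is constant. Writing $\bar\mu=g\mu$ with $g$ a $G$-invariant function, equation \eqref{eq:change:form} gives that $X_{\bar\mu}$ and $X_\mu$ differ by $X_{\ln|g|}=-\d_\pi\ln|g|$, which is a coboundary in $\X^\bullet(\kappa)$ since $\ln|g|$ is $G$-invariant and hence lies in $\X^0(\kappa)$. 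Therefore $\mod_G(M)=[X_\mu]$ does lie in $H^1_{\pi,G}(M)$.

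Finally, under the forgetful morphism $H^1_{\pi,G}(M)\to H^1_\pi(M)$ the class $\mod_G(M)$ maps to $[X_\mu]=\mod(M)=0$; since Ginzburg's averaging result makes this morphism injective for a proper and free action of a compact group by Poisson diffeomorphisms, we conclude $\mod_G(M)=0$, and hence $\mod(M//G)=0$. I expect the main obstacle to be precisely the middle step: producing one representative that is simultaneously $G$-invariant and tangent to \emph{every} $\kappa$-fiber, so that the class genuinely descends to $H^1_{\pi,G}(M)$ rather than only to $H^1_{\pi,\proj}(\kappa)$ (note that $X_{\bar\mu}$ itself is tangent only to $\kappa^{-1}(e)$, which is why the passage to $X_\mu$ is essential). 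Everything else is either formal from the preceding theorems or a short computation forced by $\chi_0=\vartheta_0=0$.
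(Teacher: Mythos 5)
Your proposal is correct and follows exactly the paper's route: show that the equivariant modular class $\mod_G(M)$ lies in (the image of) $H^1_{\pi,G}(M)$, apply Ginzburg's injectivity of the forgetful morphism $H^1_{\pi,G}(M)\to H^1_\pi(M)$ to conclude $\mod_G(M)=0$, and then invoke the preceding theorem to get $\mod(M//G)=0$. The only difference is that the paper merely asserts that $\mod_G(M)$ lies in $H^1_{\pi,G}(M)$, whereas you correctly supply the verification (that $\chi_0=\vartheta_0=0$ makes $X_\mu$ $G$-invariant and tangent to all $\kappa$-fibers, and that $\bar\mu/\mu$ is a $G$-invariant function so $[X_{\bar\mu}]=[X_\mu]$ in $H^1_{\pi,\proj}(\kappa)$).
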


\section{Modular characters}                %
\label{sec:groupoids}                       %

It is well-known that Poisson manifolds are infinitesimal objects whose global counterparts are symplectic groupoids.
Also, the modular class of a Poisson manifold has a symplectic groupoid analogue: the modular character. We will see now that
virtually all results about the modular class of a Poisson map that we
have discussed  in the preceding sections have a groupoid version.

\subsection{The modular character of a Lie groupoid}

In this section, we start by recalling a few facts about modular characters.

Let $\G$ be a Lie groupoid over $M$ and let $\s,\t: \G\to M$ be its source and target maps. We will assume, for simplicity, that $\G$ is source connected (i.e., that the fibers of $\s$, and hence $\t$, are connected). Let $\epsilon:E\to M$ be a (left) representation of $\G$, i.e., a vector bundle together with a smooth, fiberwise linear action of $\G$ on $E$ with $\epsilon$ as the moment map. We shall only be interested in the case of 1-dimensional representations, i.e., line bundles $L\to M$.

Assume first that $L\to M$ is an oriented line bundle with a global non-vanishing section $\mu:M\to L$. Then, there exists a smooth function $c_\mu:\G\to \Rr^\times$, with values in the multiplicative group of real numbers, such that for each $g\in\G$ we have
$$g\cdot \mu(\s(g))={c_\mu(g)} \mu(\t(g)).$$
One checks easily that:

\begin{prop}
The 1-cochain $\log(c_\mu):\G\to \Rr$ is closed in the differential cohomology of $\G$ with trivial coefficients. Its cohomology class does not depend on the section $\mu$ chosen.
\end{prop}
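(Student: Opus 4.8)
The plan is to recognize $c_\mu$ as a multiplicative $1$-cocycle of the groupoid and then pass to logarithms. Recall that the differentiable cohomology of $\G$ with trivial real coefficients is computed by the complex $C^k(\G)=C^\infty(\G^{(k)})$, where $\G^{(k)}$ is the manifold of strings of $k$ composable arrows, with the usual simplicial coboundary $\delta=\sum_i(-1)^i d_i^*$. Concretely, on a $0$-cochain $h\in C^\infty(M)$ one has $(\delta h)(g)=h(\t(g))-h(\s(g))$, while on a $1$-cochain $c$ and a composable pair $(g_1,g_2)$ (i.e. $\s(g_1)=\t(g_2)$) one has $(\delta c)(g_1,g_2)=c(g_2)-c(g_1g_2)+c(g_1)$.

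The first step is to show that $c_\mu$ is multiplicative. Given a composable pair $(g_1,g_2)$, I would evaluate $(g_1g_2)\cdot\mu(\s(g_1g_2))$ in two ways: directly it equals $c_\mu(g_1g_2)\,\mu(\t(g_1g_2))$, while writing $(g_1g_2)\cdot v=g_1\cdot(g_2\cdot v)$ and applying the defining relation twice gives $c_\mu(g_1)c_\mu(g_2)\,\mu(\t(g_1g_2))$. Comparing yields $c_\mu(g_1g_2)=c_\mu(g_1)c_\mu(g_2)$. Next I would observe that $c_\mu>0$ everywhere: at a unit $1_x$ the action is trivial, so $c_\mu(1_x)=1$, and since $\G$ is source connected each arrow lies in a connected $\s$-fibre containing a unit; as $c_\mu$ is continuous with values in $\Rr^\times$, it cannot change sign along such a fibre. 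This positivity is exactly what makes $\log c_\mu$ meaningful, and it is the one point where the source-connectedness hypothesis is genuinely used. Taking logarithms of the multiplicative identity turns it into $\log c_\mu(g_1g_2)=\log c_\mu(g_1)+\log c_\mu(g_2)$, which is precisely the statement $(\delta\log c_\mu)(g_1,g_2)=0$.

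For the independence of $\mu$, suppose $\mu'=f\mu$ for a nowhere-vanishing $f\in C^\infty(M)$. A short computation using the defining relation for both sections gives $c_{\mu'}(g)=c_\mu(g)\,f(\s(g))/f(\t(g))$, and hence $\log c_{\mu'}-\log c_\mu=(\log|f|)\smalcirc\s-(\log|f|)\smalcirc\t=-\delta(\log|f|)$. Thus the two cocycles differ by the coboundary of the $0$-cochain $-\log|f|$, so they define the same class in $H^1(\G;\Rr)$. I do not expect a genuine obstacle here; the only care needed is the bookkeeping of sign conventions in $\delta$ and, as noted above, justifying the passage to logarithms through the positivity of $c_\mu$.
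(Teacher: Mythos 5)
Your argument is correct and is exactly the routine verification the paper leaves to the reader (``one checks easily''): multiplicativity of $c_\mu$ from applying the defining relation to a composable pair, positivity via source-connectedness to justify the logarithm, and the coboundary $-\delta(\log|f|)$ accounting for a change of section. Nothing further is needed.
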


The cohomology class defined by the 1-cochain $c_\mu$ is called the \textbf{characteristic class} of $L$:
\[ \ch(L)=[\log(c_\mu)]\in H^1_\diff(\G). \]
One checks easily that if $L,L_1,L_2\in\Rep(\G)$, then:
\[ \ch(L^*)=-\ch(L),\qquad \ch(L_1\otimes L_2)=\ch(L_1)+\ch(L_2).\]
Also, if $(\Phi,\phi):\G\to \H$ is a morphism of Lie groupoids, and $L\in\Rep(\H)$ then there is a pull-back representation $\phi^{*}L\in\Rep(\G)$ and we have:
\[ \ch(\phi^{*}L)=\Phi^*\ch(L),\]
where $\Phi^*:H^\bullet_\diff(\H)\to H^\bullet_\diff(\G)$ is the map induced by $\Phi$ at the level of cohomology.

Finally, if $L$ is not orientable, then one defines its characteristic class to be the square root of the characteristic class of the
representation $L\otimes L$, so the formulas above still hold for non-orientable line bundles.

If $\G\tto M$ is a Lie groupoid with Lie algebroid $A\to M$, then $\G$ has a canonical representation on the line bundle $L^A:=\wedge^{\top}A\otimes\wedge^{\top}T^*M$:
for each $g\in\G$ let $b:M\to \G$ be a (local) bisection at $g$ and $i_b:\G\to \G$ the (local) inner automorphism defined by $b$
\[
\mathrm{I}_b(h)=b(t(h))\cdot h\cdot b(s(h))^{-1}, \quad (h\in\G).
\]
One may see that, for each $\nu\otimes\mu\in L^A_{s(g)}$,
$$\ds \d_{1_{s(g)}}\mathrm{I}_b \nu \otimes \mathrm{I}_{b^{-1}}^*\mu$$ does not depend on the bisection $b$ chosen and  defines a representation of $\G$ on $L^A$. Then we set:

\begin{defn}
The \textbf{modular character} of a Lie groupoid $\G\tto M$ relative to a nowhere vanishing section
$\mu\in\Gamma(L^A)$ is the characteristic cocycle $c_{\mu}:\G\to\Rr^\times$ of the representation $L^A$.
The \textbf{modular class} of $\G$ is the characteristic class:
\[ \mod(\G):=[\log(c_{\mu})]\in H^1_\diff(\G).\]
\end{defn}

If $A$ is the Lie algebroid of the Lie groupoid $\G$, every
representation $E\to M$ of $\G$ induces a representation of $A$, and
there is a van Est map $$\VE:H^\bullet_\diff(\G;E)\to H^\bullet(A;E).$$
This map is described in detail in \cite{Crainic}, where one can also
find conditions that make this map into an isomorphism. We shall only
be interested in degree one cohomology, in which case $\VE$ is always
injective (recall that $\G$ is assumed to be source
connected). Moreover, when $\G=\G(A)$ is the source 1-connected
groupoid  of $A$, the van Est map is an isomorphism in degree 1: the integration map which associates to a Lie algebroid cocycle $\al\in\Omega^1(A)$ the groupoid cocycle
\[ c([a])=\int_a \al,\]
gives the inverse of $\VE$ at the level of cohomology.

Under this correspondence, the modular class of the Lie groupoid $\G$ corresponds to the modular class of its Lie algebroid $A$ and vice-versa:

\begin{prop}
If $\G$ is a Lie groupoid with Lie algebroid $A$, then the canonical representation of $\G$ on $L^A$ induces the canonical representation of $A$ on $L^A$, and:
\[ \mod(A)=\VE(\mod(\G)). \]
Conversely, let $\G(A)$ be the source $1$-connected
Lie groupoid of the Lie algebroid $A$. If $\mu\in\Gamma(L^A)$ is a global section, the corresponding modular cocycles $\al_\mu\in\Omega^1(A)$ and $c_\mu:\G\to\Rr^\times$ correspond to each other and we have:
$$c_\mu([a])=\exp(\int_a \al_\mu).$$
\end{prop}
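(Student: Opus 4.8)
The plan is to treat the two assertions in turn: first I would identify the differentiated $\G$-action on $L^A$ with the canonical $A$-action and deduce $\mod(A)=\VE(\mod(\G))$, and then, for the source $1$-connected groupoid, I would read off the integration formula as the inverse of the degree-one van Est map.

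For the first assertion, I would differentiate the $\G$-action along one-parameter families of bisections. Fix $X\in\Gamma(A)$ and a family of (local) bisections $b_t$ with $b_0$ the unit section $x\mapsto 1_x$ and $\left.\frac{\d}{\d t}\right|_{t=0}b_t=X$. The inner automorphisms $I_{b_t}$ are groupoid automorphisms with $I_{b_0}=\mathrm{id}$, each covering the diffeomorphism $\phi_t:=\t\circ b_t$ of $M$ whose infinitesimal generator is $\rho(X)$. Using that inner automorphisms differentiate to the bracket on sections, $\left.\frac{\d}{\d t}\right|_{t=0}\bigl(\d_{1}I_{b_t}\cdot Y\bigr)=[X,Y]_A$, the induced maps on $\wedge^\top A$ differentiate to $\Lie_X=[X,\cdot]_A$; differentiating the factor $I_{b_t^{-1}}^*$ on $\wedge^\top T^*M$ yields $\Lie_{\rho(X)}$. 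Hence the derivative at $t=0$ of the action on a section $\omega\otimes\mu$ of $L^A$ is $\Lie_X\omega\otimes\mu+\omega\otimes\Lie_{\rho(X)}\mu=\nabla_X(\omega\otimes\mu)$, so the differentiated representation is the canonical $A$-representation. To compare characteristic cocycles, fix a nowhere vanishing $\mu\in\Gamma(L^A)$, so that $g\cdot\mu(\s(g))=c_\mu(g)\,\mu(\t(g))$ and $\nabla_X\mu=\langle\al_\mu,X\rangle\mu$. Setting $g_t:=b_t(x)$, the flow through $1_x$ of the vector field $\der X$ tangent to the $\s$-fibers, and differentiating $g_t\cdot\mu(x)=c_\mu(g_t)\,\mu(\phi_t(x))$ at $t=0$ with $c_\mu(1_x)=1$, the previous computation gives $\langle\al_\mu,X\rangle=\left.\frac{\d}{\d t}\right|_{t=0}\log c_\mu(g_t)$. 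As the right-hand side is $\VE(\log c_\mu)(X)$, we get $\al_\mu=\VE(\log c_\mu)$ and therefore $\mod(A)=\VE(\mod(\G))$. (If $L^A$ is non-orientable one applies this to $L^A\otimes L^A$ and halves, using additivity of $\ch$ and linearity of $\VE$.)

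For the converse, take $\G=\G(A)$ source $1$-connected, so that $\VE$ is an isomorphism in degree one whose inverse sends a $\d_A$-closed $\al\in\Omega^1(A)$ to the groupoid cocycle $[a]\mapsto\int_a\al$, as recalled in the excerpt. This assignment is well defined on $A$-homotopy classes precisely because $\al_\mu$ is $\d_A$-closed, and it is additive under concatenation of $A$-paths, so $[a]\mapsto\exp\bigl(\int_a\al_\mu\bigr)$ is a genuine multiplicative $\Rr^\times$-valued cocycle. Applying $\VE^{-1}$ to the identity $\al_\mu=\VE(\log c_\mu)$ established above gives $\log c_\mu([a])=\int_a\al_\mu$, that is $c_\mu([a])=\exp\bigl(\int_a\al_\mu\bigr)$, as claimed.

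The main obstacle is the differentiation in the second paragraph: carefully computing the derivatives of the inner automorphisms $I_{b_t}$ and of the pullback factor $I_{b_t^{-1}}^*$, and fixing the sign and orientation conventions so that the differentiated action equals $\nabla$ exactly rather than up to sign. Once that identification is secured, both $\al_\mu=\VE(\log c_\mu)$ and the integration formula are formal consequences of the degree-one van Est theory already recalled.
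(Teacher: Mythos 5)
Your proof is correct and is precisely the argument the paper intends: the proposition is stated there without proof, as an immediate consequence of the degree-one van Est theory recalled just before it, and you supply exactly the missing details (differentiating the inner-automorphism action on $L^A$ to recover the canonical flat $A$-connection, identifying $\al_\mu=\VE(\log c_\mu)$ at the cocycle level, and inverting by integration over $A$-paths on the source $1$-connected groupoid). The sign bookkeeping you flag does close up provided the induced $A$-connection is computed as the derivative of the transported-back section $g_t^{-1}\cdot\mu(\phi_t(x))$ rather than of the raw pushforward (a sanity check: for a Lie group one gets $c_\mu(g)=\det\Ad_g$ and $\al_\mu=\tr\circ\ad$, consistent with $c_\mu(\exp X)=\exp(\tr\ad_X)$), and the passage from the cohomology-level inverse to the exact identity $\log c_\mu([a])=\int_a\al_\mu$ is justified because $\VE$ is injective on degree-one \emph{cocycles} of a source-connected groupoid, not merely on cohomology.
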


Given a morphism of Lie groupoids $\Phi:\G\to \H$ one defines the \textbf{modular class of the groupoid morphism} by:
\[ \mod(\Phi):=\mod(\G)-\Phi^*\mod(\H)\in H^1_\diff(\G).\]
It follows from the previous proposition that this class is related to the modular class of the induced morphism at the level of the Lie algebroids $\Phi_*:A\to B$ by the van Est map:
\[ \mod(\Phi_*)=\VE(\mod(\Phi)).\]
Moreover, when $\G=\G(A)$, if one chooses global sections of $L^A$ and $L^B$, the cocycles representing the modular classes of the morphisms are related by integration.

\subsection{The modular character of a symplectic groupoid}
Let us turn now to the global objects integrating Poisson manifolds, i.e., symplectic groupoids.

Recall that if $(\G,\Omega)$ is a symplectic groupoid, then the base $M$ carries a canonical Poisson structure for which:
\begin{enumerate}[(a)]
\item The source (respectively, target) map is a complete Poisson (resp., anti-Poisson) map;
\item The Lie algebroid  of $\G$ is canonically isomorphic to the cotangent Lie algebroid $T^*M$.
\end{enumerate}
Let us fix a volume form $\mu\in\Omega^\top(M)$. Then  the modular
vector field $X_\mu$  is a cocycle representing the modular class of the Poisson manifold $M$. On the other hand, since $\mu\otimes\mu$ defines a global section of the bundle $L^{T^*M}=T^*M\otimes T^*M$, we also obtain a function $c_\mu:\G\to\Rr^\times$ which represents the modular class $\mod(\G)$. Note that these modular classes are related by a formula which differs from the one above by a factor of $2$:
\[ 2\mod(M)=\mod(T^*M)=\VE(\mod(\G)).\]

The presence of the symplectic structure provides additional features to the construction: we can form the (local) hamiltonian flow associated with the hamiltonian function $\frac{1}{2}\log(c_\mu)$. This gives rise to a (local) 1-parameter group $\Phi_t:\G\to\G$ of symplectic groupoid automorphisms which then must cover a 1-parameter group of Poisson diffeomorphisms $\phi_t:M\to M$:
\[
\xymatrix{
\G\ar@<.5ex>[d]\ar@<-.5ex>[d]\ar[r]^{\Phi_t}&\G\ar@<.5ex>[d]\ar@<-.5ex>[d]\\
M\ar[r]_{\phi_t}& M
}
\]
One  easily checks  that:

\begin{prop}
The 1-parameter group of Poisson diffeomorphisms $\phi_t:M\to M$ coincides with the flow of the Poisson vector field $X_\mu$.
\end{prop}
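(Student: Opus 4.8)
The plan is to prove that the two flows coincide by showing that they have the same infinitesimal generator on $M$, namely $X_\mu$; since a (local) flow is determined by its generating vector field, this suffices. By construction $\Phi_t$ is the (local) flow of the hamiltonian vector field $X_H$ of $H=\tfrac12\log(c_\mu)$ with respect to $\Omega$, and because $c_\mu$ is a groupoid $1$-cocycle the maps $\Phi_t$ are symplectic groupoid automorphisms covering $\phi_t$, as recalled in the set-up. So I only need to identify the generator $Y$ of $\phi_t$ with $X_\mu$.

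First I would use that a groupoid automorphism preserves the unit manifold: $\Phi_t(1_x)=1_{\phi_t(x)}$. Differentiating at $t=0$ shows that $X_H$ is tangent to the units $M\subset\G$ along $M$, and that under the identification $T_{1_x}M\cong T_xM$ its restriction to the units is exactly $Y$. Thus the computation of $Y$ is localized to the evaluation of $X_H$ on the unit section: $Y(x)=X_H(1_x)\in T_xM$.

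Then I would bring in the two structural features of a symplectic groupoid: the base $M$ is a Lagrangian submanifold of $(\G,\Omega)$, and the Lie algebroid $A=\ker\d\s|_M$ is identified with $T^*M$ precisely through the map $\sigma\colon A_x\to T^*_xM$, $v\mapsto\Omega_{1_x}(v,\cdot)|_{T_xM}$, the anchor becoming $\pi^\sharp$. Writing $v=X_H(1_x)\in T_xM$ and pairing the defining relation $i_v\Omega_{1_x}=\d H_{1_x}$ with an arbitrary $\eta\in A_x$, the Lagrangian condition annihilates the contributions along $T_xM$ and the remaining duality pairing $\Omega_{1_x}\colon A_x\times T_xM\to\Rr$ reduces the equation (up to the fixed sign conventions) to $\sigma(\eta)(v)=\tfrac12\,\d_{1_x}(\log c_\mu)(\eta)$ for all $\eta\in A_x$. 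The right-hand side is, by definition, $\tfrac12$ of the van Est image $\VE(\log c_\mu)$ evaluated at $x$, so $v=\tfrac12\VE(\log c_\mu)(x)$ once $A^*$ is identified with $TM$ through $\sigma$.

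Finally I would invoke the earlier proposition computing $\VE(\mod(\G))$: for the distinguished global section $\mu\otimes\mu$ of $L^{T^*M}$ one has $c_\mu([a])=\exp(\int_a\alpha_\mu)$ with $\alpha_\mu$ the modular cocycle of $T^*M$, and this cocycle equals $2X_\mu$ — the cocycle-level form of the relation $\mod(T^*M)=2\mod(M)$, which is exactly what the factor $\tfrac12$ in $H$ was inserted to absorb. Hence $v=X_\mu(x)$, giving $Y=X_\mu$ and identifying $\phi_t$ with the flow of $X_\mu$. I expect the main obstacle to be the sign-and-factor bookkeeping in the third step: fixing the symplectic identification $A\cong T^*M$ together with the hamiltonian sign convention so that the pairing yields $+X_\mu$, and checking that $\VE(\log c_\mu)$ equals $2X_\mu$ on the nose (not merely up to a coboundary), which relies on using the specific section $\mu\otimes\mu$ rather than an arbitrary trivialization.
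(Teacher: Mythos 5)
Your argument is correct. The paper itself offers no proof of this proposition (it is prefaced by ``one easily checks''), so there is nothing to compare against; your verification is the natural one and fills the gap completely: restrict the hamiltonian vector field of $H=\tfrac12\log(c_\mu)$ to the unit section, use that $M\subset\G$ is Lagrangian and that $T_{1_x}\G=T_xM\oplus A_x$ with $A\cong T^*M$ via $\eta\mapsto i_\eta\Omega|_{TM}$, and identify the resulting covector pairing with $\tfrac12\,\d_{1_x}(\log c_\mu)|_{A_x}=\tfrac12\,\al_{\mu\otimes\mu}=X_\mu$, which holds at the cocycle level (not just in cohomology) precisely because the groupoid and algebroid characteristic cocycles of $L^{T^*M}$ are both computed with respect to the same section $\mu\otimes\mu$. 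One small streamlining: you do not need the automorphism property to see that $X_H(1_x)$ is tangent to the units --- since $H$ vanishes identically on $M$ (as $c_\mu(1_x)=1$), one has $\d H|_{T_xM}=0$, hence $X_H(1_x)\in (T_xM)^{\perp_\Omega}=T_xM$ by the Lagrangian condition alone; the automorphism property is then only needed to know that this tangential restriction generates $\phi_t$. The remaining issue you flag --- fixing the sign in the identification $A\cong T^*M$ compatibly with the convention $X_h=-\d_\pi h$ so that the answer comes out as $+X_\mu$ --- is real but purely conventional, and is resolved by whichever sign makes the anchor of $A$ equal to $\pi^\sharp$ (as the paper requires).
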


Since the van Est map is injective in degree one, we have that $\mod(\G)=0$ if and only if $\mod(M)=0$. In fact, the Poisson vector field $X_\mu$ is the hamiltonian vector field associated with $h:M\to \Rr$ if and only if the cocycle $\frac{1}{2}\log(c_\mu)$ is exact:
\[  \frac{1}{2}\log(c_\mu)=h\circ\t-h\circ\s. \]
In this case, the 1-parameter group of automorphisms $\Phi_t:\G\to \G$ is inner: it is given by conjugation by the 1-parameter group of Lagrangian bisections $b_t:M\to\G$ which are obtained by moving the identity bisection by the flow of the hamiltonian vector field $X_{h\circ\s}$.

If we start with a Poisson manifold $(M,\pi_M)$, it is well known that its cotangent Lie algebroid $T^*M$ need not be integrable. However, for integrable Poisson manifolds, the Weinstein groupoid $\Sigma(M):=\G(T^*M)$ is automatically a symplectic groupoid. It follows from the general theory for Lie groupoids that if we fix a volume form $\mu\in\Omega^\top(M)$ the modular character $c_\mu$ (relative to $\mu\otimes\mu$) and the modular vector field $X_\mu$ are related by:
\[  \frac{1}{2}\log(c_\mu([a]))=\int_a X_\mu, \]
for any $A$-path $a:I\to A$.

\subsection{The modular character of a Poisson map}
Given a Poisson map $\phi:M\to N$, what is the groupoid (integrated) version of $\mod(\phi)$? Note that a general Poisson map between integrable Poisson manifolds does not induce a morphism of the corresponding symplectic groupoids. Hence, the results above, relating the modular classes of morphisms of Lie groupoids and the modular classes of Lie algebroids, do not have an analogue for Poisson maps.

Instead, as we have pointed out before, the key fact is that the graph of a Poisson map $\phi:M\to N$ is a coisotropic submanifold, $\graf(\phi)\subset M\times\overline{N}$. Recall from Section \ref{subsec:mod:Poisson:map} that this means that the conormal bundle to the graph gives a Lie subalgebroid $\nu^*(\graf(\phi))\subset T^*M\times T^*\overline{N}$, and that there is an isomorphism $\phi^{*}T^*N\simeq \nu^*(\graf(\phi))$.
\begin{equation}
\label{eq:diag:relative}
\xymatrix{
\phi^{*}T^*N\ar[d]\ar[r]^-{(i,j)}& \nu^*(\graf(\phi)\subset T^*M\times T^*\overline{N}\ar@<-6ex>[d]\ar@<7ex>[d]\\
M\ar[r] &\graf(\phi))\subset M\times \overline{N}}
\end{equation}
Lie subalgebroids of integrable algebroids are necessarily integrable. Hence, assuming that both $M$ and $N$ are integrable Poisson manifols, it follows that the Lie algebroid $\nu^*(\graf(\phi))$ is also integrable, and so is $\phi^{*}T^*N$. However, it is important to note that a Lie subalgebroid, unlike the case of Lie subalgebras, may not integrate to a Lie subgroupoid (see \cite{MM}).

Using the results of \cite{CaFe1} (see, also, \cite{Fernandes0}) we conclude that:

\begin{prop}
\label{prop:grp:morph}
Let $\phi:M\to N$ be a Poisson map between integrable Poisson
manifolds. Then $\phi^{*}T^*N$ and $\nu^*(\graf(\phi))$ are integrable
 Lie  algebroids and the diagram \eqref{eq:diag:relative} integrates to a diagram of groupoid morphisms:
\[
\newdir{ (}{{}*!/-5pt/@^{(}}
\xymatrix{
\G(\phi^{*}T^*N)\ar[r]^-{(I,J)}\ar@<.5ex>[d]\ar@<-.5ex>[d]& \Lcal_\phi\ar@<-2pt>@{ (->}[r]\ar@<.5ex>[d]\ar@<-.5ex>[d]
&\Sigma(M)\times \overline{\Sigma(N)}\ar@<.5ex>[d]\ar@<-.5ex>[d]\\
M\ar[r] &\graf(\phi)\ar@<-2pt>@{ (->}[r]& M\times \overline{N}}
\]
where the first map  of the top row  is an isomorphism and the second map is a Lagrangian immersion (possibly non-injective).
\end{prop}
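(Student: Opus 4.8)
The plan is to establish integrability of both algebroids first, then use the functoriality of integration to lift the algebroid diagram \eqref{eq:diag:relative} to the groupoid level, and finally identify the geometric type of each resulting arrow.

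First I would record the integrability. Since $M$ and $N$ are integrable, the cotangent Lie algebroids $T^*M$ and $T^*N$ integrate to the source $1$-connected symplectic groupoids $\Sigma(M)$ and $\Sigma(N)$, and $T^*\overline N$ integrates to $\overline{\Sigma(N)}$ (the same groupoid with the opposite symplectic form). Hence $T^*M\times T^*\overline N=T^*(M\times\overline N)$ integrates to the product symplectic groupoid $\Sigma(M)\times\overline{\Sigma(N)}$, which is again source $1$-connected, with multiplicative symplectic form $\Omega_M\oplus(-\Omega_N)$. Now $\nu^*(\graf(\phi))$ is a Lie subalgebroid of this integrable algebroid, so by the fact quoted above it is integrable; via the isomorphism of Proposition \ref{prop:pull:back:algbrd}, so is $\phi^*T^*N$. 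I set $\Lcal_\phi:=\G(\nu^*(\graf(\phi)))$.

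Next I would integrate the two arrows. By the functoriality of integration for source $1$-connected groupoids (the results of \cite{CaFe1}, see also \cite{Fernandes0}), every morphism of Lie algebroids lifts to a morphism of the corresponding source $1$-connected integrations. The isomorphism $(i,j)\colon\phi^*T^*N\to\nu^*(\graf(\phi))$ of Proposition \ref{prop:pull:back:algbrd} therefore integrates to an isomorphism $(I,J)\colon\G(\phi^*T^*N)\to\Lcal_\phi$, the top-left arrow. The conormal inclusion $\nu^*(\graf(\phi))\hookrightarrow T^*M\times T^*\overline N$ is an injective immersion of Lie algebroids, and the same results guarantee that its integration $\Lcal_\phi\to\Sigma(M)\times\overline{\Sigma(N)}$ is an immersion of Lie groupoids; as \cite{MM} warns, it may fail to be injective, which is why only an immersion is claimed.

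The hard part will be showing that this immersion is Lagrangian. The dimension count is immediate: $\graf(\phi)$ has dimension $\dim M$ and $\nu^*(\graf(\phi))$ has rank $\dim N$, so $\dim\Lcal_\phi=\dim M+\dim N=\tfrac12\dim\bigl(\Sigma(M)\times\overline{\Sigma(N)}\bigr)$, the correct half-dimension. It remains to prove isotropy, and here the essential point is the interplay between the multiplicative form $\Omega_M\oplus(-\Omega_N)$ and the conormal subalgebroid. I would argue infinitesimally: the canonical symplectic structure on the cotangent groupoid $\Sigma(M\times\overline N)$ is the multiplicative closed $2$-form whose infinitesimally multiplicative (IM) form is, up to sign, the identity $T^*(M\times\overline N)\to T^*(M\times\overline N)$. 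Restricting to the subgroupoid $\Lcal_\phi$, whose algebroid is $\nu^*(\graf(\phi))=(T\graf(\phi))^0$ over $\graf(\phi)$, the induced IM form is the map $(T\graf(\phi))^0\to T^*\graf(\phi)$ sending a covector $\al$ to $\al|_{T\graf(\phi)}$, which vanishes identically by the very definition of the conormal bundle. Since $\Lcal_\phi$ is source $1$-connected, a multiplicative closed $2$-form on it is determined by its IM form; hence the pullback of $\Omega_M\oplus(-\Omega_N)$ to $\Lcal_\phi$ vanishes and the immersion is isotropic. Combined with the half-dimension count, its image is a Lagrangian immersed subgroupoid, as claimed. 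The point demanding care is precisely that isotropy concerns the genuinely multiplicative form and not merely its restriction along the units, and it is here that source $1$-connectedness of $\Lcal_\phi$ is used to pass from the vanishing of the IM data to the vanishing of the form.
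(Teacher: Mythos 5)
Your argument is correct, but it is worth noting that the paper itself offers no proof of this proposition: it defers entirely to the results of Cattaneo and Felder \cite{CaFe1} (see also \cite{Fernandes0}), whose original argument realizes $\Sigma(M\times\overline{N})$ as a symplectic quotient of the Banach manifold of cotangent paths and checks directly that the paths valued in the conormal bundle of the coisotropic submanifold $\graf(\phi)$ descend to an immersed Lagrangian subgroupoid. Your treatment of integrability and of the two arrows (the isomorphism $(I,J)$ integrating $(i,j)$, and the possibly non-injective immersion integrating the conormal inclusion) coincides with what the paper implicitly relies on, namely integrability of Lie subalgebroids of integrable algebroids together with Lie's second theorem for source $1$-connected integrations. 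Where you genuinely depart from the cited source is in the Lagrangian step: instead of the path-space reduction, you invoke the correspondence between multiplicative closed $2$-forms on a source-connected Lie groupoid and their infinitesimal (IM) data, observe that the IM form of the canonical symplectic structure is the identity on $T^*(M\times\overline{N})$, and conclude that its pullback to $\Lcal_\phi$ has vanishing IM form because conormal covectors annihilate $T\graf(\phi)$; uniqueness then kills the pulled-back form, and the half-dimension count finishes the argument. This is a clean, self-contained route, at the price of importing the injectivity half of the multiplicative-forms correspondence (Bursztyn--Crainic--Weinstein--Zhu and its refinements), a result the paper does not reference; the Cattaneo--Felder approach avoids that machinery but requires the infinite-dimensional model of $\Sigma$. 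One small point of care which you handle correctly: $\Lcal_\phi$ need not be a subgroupoid, so the isotropy must be checked for the \emph{pullback} of the symplectic form along the immersion, and your computation of its IM data is exactly that pullback at the infinitesimal level.
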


Now we can finish our task: choose volume forms $\mu\in\Omega^\top(M)$ and $\nu\in\Omega^\top(N)$, so these define the modular characters $c_\mu:\Sigma(M)\to\Rr^\times$ and $c_\nu:\Sigma(N)\to\Rr^\times$. Denote by $I:\G(\phi^{*}T^*N)\to\Sigma(M)$ and $J:\G(\phi^{*}T^*N)\to\Sigma(N)$ the groupoid morphisms integrating the Lie algebroid morphisms $i:\phi^{*}T^*N\to T^*M$ and $j:\phi^{*}T^*N\to T^*N$.

\begin{defn}
The \textbf{modular character of the Poisson map $\phi:M\to N$} is the groupoid morphism:
\[
c_{\mu,\nu}:\G(\phi^{*}T^*N)\to\Rr^\times, \quad g\mapsto c_\mu(I(g))c_\nu(J(g))^{-1}.
\]
\end{defn}

It follows immediately that:

\begin{cor}
Let $\phi:M\to N$ be a Poisson map and choose $\mu\in\Omega^\top(M)$ and $\nu\in\Omega^\top(N)$. Then the modular character of $\phi$ is given by:
\[ \frac{1}{2}\log(c_{\mu,\nu}([a]))=\int_a X_{\mu,\nu}, \]
where $X_{\mu,\nu}$ is the modular vector field of $\phi$ relative to $\mu$ and $\nu$. In particular, under the van Est map:
\begin{align*}
H^1(\G(\phi^{*}T^*N))&\stackrel{\VE}{\longrightarrow} H^1_\pi(\phi)\\
\frac{1}{2}[\log(c_{\mu,\nu})]&\longmapsto \mod(\phi).
\end{align*}
\end{cor}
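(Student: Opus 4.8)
The plan is to unwind the definition $c_{\mu,\nu}(g)=c_\mu(I(g))\,c_\nu(J(g))^{-1}$ and reduce everything to the two symplectic--groupoid identities already established in this section, namely $\tfrac12\log(c_\mu([b]))=\int_b X_\mu$ for cotangent paths $b$ in $T^*M$, and its analogue for $N$. Taking logarithms, for an $A$-path $a$ in $\phi^{*}T^*N$ covering $\gamma:I\to M$ we get
\[
\tfrac12\log(c_{\mu,\nu}([a]))=\tfrac12\log(c_\mu(I([a])))-\tfrac12\log(c_\nu(J([a]))).
\]
The first thing I would record is the functoriality of the Weinstein groupoid: since $I$ and $J$ are the groupoid morphisms integrating the Lie algebroid morphisms $i$ and $j$ of Proposition \ref{prop:grp:morph}, they act on $A$-path homotopy classes by post-composition, $I([a])=[i\circ a]$ and $J([a])=[j\circ a]$. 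Both $i\circ a$ and $j\circ a$ are genuine cotangent paths, in $T^*M$ and $T^*N$ respectively, so the two established formulas apply termwise.

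Next I would substitute and use the explicit descriptions of $i$ and $j$ from Proposition \ref{prop:pull:back:algbrd}: fiberwise $i(m,\al)=(\d_m\phi)^*\al$ is the transpose of $\d\phi$, while $j(m,\al)=\al$ is the tautological map covering $\phi$. Writing the integral over a cotangent path as $\int_b X=\int_0^1\eval{X|_{(\cdot)},b(t)}\,\d t$, the adjunction $\eval{v,(\d_m\phi)^*\al}=\eval{\d_m\phi\,v,\al}$ gives $\int_{i\circ a}X_\mu=\int_0^1\eval{(\d\phi\cdot X_\mu)|_{\gamma(t)},a(t)}\,\d t$, while the tautological $j$ yields $\int_{j\circ a}X_\nu=\int_0^1\eval{(X_\nu\circ\phi)|_{\gamma(t)},a(t)}\,\d t$. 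Subtracting and recalling $X_{\mu,\nu}=\d\phi\cdot X_\mu-X_\nu\circ\phi$ (the Lemma in Section \ref{subsec:mod:Poisson:propert}) collapses the two terms into $\int_0^1\eval{X_{\mu,\nu}|_{\gamma(t)},a(t)}\,\d t=\int_a X_{\mu,\nu}$, which is the asserted formula; here $\int_a X_{\mu,\nu}$ is read as the pairing of the $1$-cocycle $X_{\mu,\nu}\in\Omega^1(\phi^{*}T^*N)$ with the $A$-path $a$. The factor $\tfrac12$ is precisely the one coming from $\mod(T^*M)=2\mod(M)$ in Example \ref{ex:mod:Poisson}.

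Finally, the van Est statement is a formal consequence: in degree one the inverse of $\VE$ is the integration map sending an algebroid cocycle $\al$ to the groupoid cocycle $[a]\mapsto\int_a\al$. The identity just proved says exactly that $\tfrac12\log(c_{\mu,\nu})$ is the groupoid cocycle integrating $X_{\mu,\nu}$, so $\VE(\tfrac12[\log(c_{\mu,\nu})])=[X_{\mu,\nu}]=\mod(\phi)$. I expect the only genuine point needing care to be the justification that the integrating morphisms $I,J$ really act by post-composition on $A$-paths, $I([a])=[i\circ a]$ and $J([a])=[j\circ a]$, together with the bookkeeping that the three integrals live over the matching base paths $\gamma$ and $\phi\circ\gamma$; once those identifications are in place the remaining computation is routine.
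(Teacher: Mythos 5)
Your argument is correct and is precisely the deduction the paper intends: the paper states this corollary with no proof ("It follows immediately"), and your unwinding — $I([a])=[i\circ a]$, $J([a])=[j\circ a]$, the identity $\tfrac12\log(c_\mu([b]))=\int_b X_\mu$ applied on each factor, and the adjunction collapsing the two integrals into $\int_a(\d\phi\cdot X_\mu-X_\nu\circ\phi)$ — is exactly that immediate argument made explicit, with the van Est statement following from the degree-one integration map being inverse to $\VE$. No gaps; the only point you flag (that the integrating morphisms act by post-composition on $A$-paths) is indeed the standard fact from the Crainic--Fernandes construction of $\G(A)$ that the paper is implicitly relying on.
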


\subsection{Complete Poisson maps}
Recall that a Poisson map $\phi:M\to N$ is called \emph{complete} if whenever $X_h$ is a complete hamiltonian vector field on $N$ then $X_{h\circ\phi}$ is a complete vector field on $M$.
The category whose objects are integrable Poisson manifolds and the morphisms are complete Poisson maps plays an fundamental role in Poisson geometry. This is illustrated by a deep theorem of Crainic and Fernandes \cite{CF2} stating that a Poisson manifold $M$ is integrable iff it admits a complete symplectic realization, i.e., if there is a symplectic manifold $S$ and a surjective, complete, Poisson submersion $\phi:S\to M$. We will see that for complete Poisson maps the theory of modular characters can be made more explicit.

A basic fact is the following:

\begin{prop}
Let $\phi:M\to N$ be a complete Poisson map between integrable Poisson manifolds $(M,\pi_M)$ and $(N,\pi_N)$. Then there is a natural left action of the Lie groupoid $\Sigma(N)$ on $M$ with moment map $\phi$.
\end{prop}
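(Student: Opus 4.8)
The plan is to read off an infinitesimal action of $T^{*}N$ on $M$ from the anchor of the Lie algebroid $\phi^{*}T^{*}N$, and then to integrate this infinitesimal action to $\Sigma(N)=\G(T^{*}N)$, using the completeness of $\phi$ to guarantee that the resulting local action is in fact global.

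First I would exhibit the infinitesimal action. Recall from Proposition \ref{prop:pull:back:algbrd} that the anchor of $A=\phi^{*}T^{*}N$ is $\rho_{A}(\phi^{*}\al)=\pi_{M}^{\sharp}(\phi^{*}\al)$, while the map $j$ covers $\phi$. Because $\phi$ is a Poisson map, the square
\[
\xymatrix{
\phi^{*}T^{*}N\ar[r]^-{\rho_{A}}\ar[d]_{j}& TM\ar[d]^{\d\phi}\\
T^{*}N\ar[r]_-{\pi_{N}^{\sharp}}& TN}
\]
commutes, and the bracket relation \eqref{eq:bracket:pull:back:algbrd} shows that $\Gamma(T^{*}N)\to\X(M)$, $\al\mapsto\pi_{M}^{\sharp}(\phi^{*}\al)$, is a Lie algebra morphism. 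These are exactly the data of an infinitesimal action of the Lie algebroid $T^{*}N$ along the map $\phi$, for which $\phi^{*}T^{*}N$ is the associated action Lie algebroid.

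Next I would integrate this infinitesimal action by lifting cotangent paths. Given $g\in\Sigma(N)$, represent it by a cotangent path $a:I\to T^{*}N$ covering $\gamma:I\to N$ with $\gamma(0)=\s(g)$. For a point $m\in M$ with $\phi(m)=\s(g)$, consider the time-dependent lifting equation
\[
\dot{\widetilde m}(t)=\pi_{M}^{\sharp}\bigl((\d_{\widetilde m(t)}\phi)^{*}a(t)\bigr),\qquad \widetilde m(0)=m,
\]
whose right-hand side is precisely the anchor of the cotangent path $t\mapsto(\widetilde m(t),a(t))$ in $\phi^{*}T^{*}N$. This is where completeness enters: a priori the solution $\widetilde m$ exists only for small time, but the hypothesis that $\phi$ is a complete Poisson map is exactly the condition ensuring that such lifts are defined on all of $I$ (this is the content of the completeness criterion for cotangent-path lifting in \cite{CF2}). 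One then sets $g\cdot m:=\widetilde m(1)$, which satisfies $\phi(g\cdot m)=\t(g)$.

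Finally I would check well-definedness and the action axioms. Well-definedness amounts to showing that cotangent-homotopic paths produce lifts with the same endpoint; this follows because $t\mapsto(\widetilde m(t),a(t))$ is a cotangent path in $\phi^{*}T^{*}N$ whose image under the groupoid morphism $J:\G(\phi^{*}T^{*}N)\to\Sigma(N)$ of Proposition \ref{prop:grp:morph} is $[a]=g$, so homotopy invariance of the $A$-path construction makes $\widetilde m(1)$ depend only on $g$ and $m$. The identities $1_{\phi(m)}\cdot m=m$ and $g\cdot(h\cdot m)=(gh)\cdot m$ then follow from the behaviour of cotangent paths under concatenation. I expect the main obstacle to be precisely the passage from a local to a global action, i.e. establishing completeness of the lifts: one must translate the completeness hypothesis, phrased in terms of Hamiltonian vector fields $X_{h}$ on $N$, into global solvability of the lifting equation for an arbitrary cotangent path, which is handled by the theory of \cite{CF2,MM}.
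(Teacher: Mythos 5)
Your proposal is correct and follows essentially the same route as the paper: both lift a cotangent path $a$ representing $g\in\Sigma(N)$ to the curve $\widetilde{m}$ in $M$ solving $\dot{\widetilde m}(t)=\pi_M^\sharp\bigl((\d_{\widetilde m(t)}\phi)^*a(t)\bigr)$, set $g\cdot m:=\widetilde m(1)$, and invoke homotopy invariance of the lift for well-definedness. If anything, you are more explicit than the paper about where completeness of $\phi$ is used to guarantee that the lifting ODE has solutions on all of $I$.
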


\begin{proof}
We construct the action as follows. For each  $g=\brr{a}\in\Sigma(N)$
and  $m\in M$  such that $\phi(m)=\s(g)$, consider
$\widetilde{\gamma}$, the unique curve on $M$ starting  at  $m$ such that $\ds \phi(\widetilde\gamma(t))=\gamma(t)$ and $\ds \pi_M^\sharp\phi^*_{\widetilde\gamma(t)}a(t)=\dot\gamma(t)$.
Notice that the curve $\widetilde a(t)=\phi^*_{\widetilde\gamma(t)}a(t)$ is a cotangent path in $M$. The action of $\Sigma(N)$ on $M$ is then given by:
\[ g\cdot m:=\tilde\gamma(1)=\t(\brr{\widetilde a}). \]
This is well defined because if $a_0$ and $a_1$ are cotangent homotopic paths then, by the results of \cite{CF1}, the corresponding paths $\tilde\gamma_0$ and $\tilde\gamma_1$ are also homotopic relative to the end points, and in particular $\tilde\gamma_0(1)=\tilde\gamma_1(1)$.
\end{proof}

Given a complete Poisson map $\phi:M\to N$ we denote by $\phi^*\Sigma(N)\tto M$ the Lie groupoid defined by the action of $\Sigma(N)$ on $M$ with moment map $\phi$, i.e.
\[
 \phi^*\Sigma(N):=\set{(m,g)\in M\times\Sigma(N): \s(g)=\phi(m)},
\]
with source and target maps:
\[ \s(m,g):=m, \quad\t(m,g):=g\cdot m. \]

\begin{prop}
The Lie groupoid $\phi^*\Sigma(N)$ is a source 1-connected Lie groupoid integrating the Lie algebroid $\phi^*T^*N$, i.e., $\phi^*\Sigma(N)\simeq\G(\phi^*T^*N)$. Moreover, the two Lie groupoid morphisms $I:\G(\phi^*T^*N)\to \Sigma(M)$ and $J:\G(\phi^*T^*N)\to \Sigma(N)$ correspond to:
\begin{align*}
I:\phi^*\Sigma(N)&\to \Sigma(M) & J:\phi^*\Sigma(N)&\to \Sigma(N) \\
  (m,[a])&\mapsto [\tilde a] & (m,[a])&\mapsto [a].
\end{align*}
\end{prop}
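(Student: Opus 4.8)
The plan is to recognize $\phi^*\Sigma(N)$ as the action groupoid of the $\Sigma(N)$-action on $M$ constructed in the previous proposition, and then to conclude by uniqueness of source $1$-connected integrations. Since that action has moment map $\phi$ and the source $\s\colon\Sigma(N)\to N$ is a surjective submersion, the fibered product $\phi^*\Sigma(N)=M\times_N\Sigma(N)$ is automatically a smooth manifold, and the formulas $\s(m,g)=m$, $\t(m,g)=g\cdot m$ make it a Lie groupoid over $M$ (the completeness of $\phi$ is what guarantees the action, and hence $\t$, is globally defined). The first thing I would verify is source $1$-connectedness: the projection $(m,g)\mapsto g$ restricts to a diffeomorphism from the source fiber $\s^{-1}(m)$ of $\phi^*\Sigma(N)$ onto the source fiber $\s^{-1}(\phi(m))$ of $\Sigma(N)$. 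As $\Sigma(N)=\G(T^*N)$ is source $1$-connected by construction, its source fibers are $1$-connected, and therefore so are those of $\phi^*\Sigma(N)$.

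Next I would identify the Lie algebroid. For a Lie groupoid action with moment map $\phi$, the Lie algebroid of the action groupoid is the action algebroid, whose underlying vector bundle is $\phi^*T^*N$, with anchor the infinitesimal action and bracket determined on pullback sections by $[\phi^*\al,\phi^*\be]=\phi^*[\al,\be]$. Reading the infinitesimal action off the construction of the $\Sigma(N)$-action, where $g=[a]$ moves $m$ along the curve $\tilde\gamma$ with $\dot{\tilde\gamma}(t)=\pi_M^\sharp\,\phi^*_{\tilde\gamma(t)}a(t)$, the generator of $\al\in\Omega^1(N)$ is exactly $\pi_M^\sharp(\phi^*\al)$. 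Thus the anchor is \eqref{eq:anchor:pull:back:algbrd} and the bracket is \eqref{eq:bracket:pull:back:algbrd}, so this action algebroid coincides with the pullback algebroid of Proposition \ref{prop:pull:back:algbrd}. Two source $1$-connected Lie groupoids with isomorphic Lie algebroid are isomorphic, which gives the desired $\phi^*\Sigma(N)\simeq\G(\phi^*T^*N)$.

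Finally I would pin down the morphisms. The maps $I,J$ of Proposition \ref{prop:grp:morph} are by construction the unique source $1$-connected integrations of the algebroid morphisms $i\colon\phi^*T^*N\to T^*M$ and $j\colon\phi^*T^*N\to T^*N$, so it suffices to produce groupoid morphisms with the stated formulas and the correct linearizations. The projection $(m,[a])\mapsto[a]$ is a Lie groupoid morphism covering $\phi$ (the moment-map condition $\phi(g\cdot m)=\t(g)$ makes it compatible with $\s$ and $\t$), and its linearization at the units is the canonical projection $\phi^*T^*N\to T^*N$, namely $j$; uniqueness then forces it to be $J$. Likewise, the lift $(m,[a])\mapsto[\tilde a]$, with $\tilde a(t)=\phi^*_{\tilde\gamma(t)}a(t)$, is well defined on cotangent homotopy classes (this is precisely the well-definedness of the action, via \cite{CF1}), is a smooth groupoid morphism into $\Sigma(M)$, and linearizes to $i(\phi^*\al)=(\d\phi)^*\al$; hence it must equal $I$.

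The main obstacle I expect is the last identification: checking that the lift $(m,[a])\mapsto[\tilde a]$ genuinely descends to cotangent homotopy classes and is smooth as a map into $\Sigma(M)$, together with matching its linearization to $i$. This is exactly where the completeness hypothesis and the homotopy-invariance results of \cite{CF1,CaFe1} enter in an essential way; once the two candidate maps are known to be groupoid morphisms with linearizations $j$ and $i$, the conclusion is immediate from the uniqueness of integration of Lie algebroid morphisms to source $1$-connected groupoid morphisms.
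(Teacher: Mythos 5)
Your proposal is correct and follows essentially the same route as the paper: the paper's own (much terser) proof also establishes source $1$-connectedness by noting that the source fibers of $\phi^*\Sigma(N)$ are diffeomorphic to those of $\Sigma(N)$, and then computes $I_*$ and $J_*$ to be the algebroid morphisms $i$ and $j$, concluding by uniqueness of integration. Your write-up merely makes explicit the intermediate steps (identification of the action algebroid with the pullback algebroid of Proposition \ref{prop:pull:back:algbrd}, and the well-definedness of $(m,[a])\mapsto[\tilde a]$ on cotangent homotopy classes) that the paper leaves to the reader.
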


\begin{proof}
Notice that the source fibers of $\phi^*\Sigma(N)$ are diffeomorphic to the source fibers of $\Sigma(N)$, so they are 1-connected. If we compute $I_*:\phi^*T^*M\to T^*M$ and $J_*:\phi^*T^*M\to T^*N$ we obtain the algebroid morphisms $i$ and $j$, so the result follows.
\end{proof}

We conclude that for a complete Poisson map $\phi:M\to N$, the Lie
groupoid $\phi^*\Sigma(N)\simeq \Lcal_\phi$ is a Lagrangian
subgroupoid of $\Sigma(M)\times \overline{\Sigma(N)}$, a much better
statement than the general situation given by Proposition
\ref{prop:grp:morph}, where the Lagrangian subgroupoid is only
immersed. As a corollary, we obtain a  simple expression for the modular character of a complete Poisson map:

\begin{cor}
Let $\phi:M\to N$ be a complete Poisson map between integrable Poisson manifolds. Its modular character $c_{\mu,\nu}:\phi^*\Sigma(N)\to\Rr^\times$ relative to volume forms $\mu\in\Omega^{\top}(M)$ and $\nu\in\Omega^\top(N)$ is given by:
\[ \frac{1}{2}\log(c_{\mu,\nu}(m,[a]))=\int_{\widetilde a} X_{\mu}-\int_a X_{\nu}, \]
where $\widetilde{a}$ is the unique cotangent path in $M$ starting at $m$ and lifting the cotangent path $a$.
\end{cor}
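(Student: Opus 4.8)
The plan is to unwind the definition of the modular character $c_{\mu,\nu}$ and then substitute the two facts proved immediately above. By definition,
\[
c_{\mu,\nu}(g)=c_\mu(I(g))\,c_\nu(J(g))^{-1},\qquad g\in\G(\phi^{*}T^*N),
\]
and under the isomorphism $\phi^*\Sigma(N)\simeq\G(\phi^{*}T^*N)$ of the preceding proposition the morphisms $I$ and $J$ become $I(m,[a])=[\widetilde a]$ and $J(m,[a])=[a]$, where $\widetilde a(t)=\phi^*_{\widetilde\gamma(t)}a(t)$ is the lifted cotangent path in $M$ starting at $m$. Evaluating at $g=(m,[a])$ therefore gives
\[
c_{\mu,\nu}(m,[a])=c_\mu([\widetilde a])\,c_\nu([a])^{-1}.
\]

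Next I would apply $\frac{1}{2}\log$, which turns the product into a difference, and then invoke the integral formula for the modular character of a source $1$-connected symplectic groupoid recorded earlier in this section: for such a groupoid $\frac{1}{2}\log(c_\mu([b]))=\int_b X_\mu$ for every cotangent path (i.e. $A$-path) $b$. Since $\Sigma(M)=\G(T^*M)$ and $\Sigma(N)=\G(T^*N)$ are precisely the source $1$-connected Weinstein groupoids, this formula applies verbatim to $[\widetilde a]\in\Sigma(M)$ and to $[a]\in\Sigma(N)$, so that
\[
\frac{1}{2}\log\bigl(c_{\mu,\nu}(m,[a])\bigr)=\frac{1}{2}\log\bigl(c_\mu([\widetilde a])\bigr)-\frac{1}{2}\log\bigl(c_\nu([a])\bigr)=\int_{\widetilde a}X_\mu-\int_a X_\nu,
\]
which is exactly the claimed identity.

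Since the argument is just a composition of results already in place, I do not expect any genuine analytic obstacle; the points that require attention are purely bookkeeping. The first is the consistent tracking of the factor $\frac{1}{2}$: it arises because each symplectic-groupoid character $c_\mu$ is taken relative to the section $\mu\otimes\mu$ of $L^{T^*M}=T^*M\otimes T^*M$, whence $\VE(\mod(\G))=\mod(T^*M)=2\mod(M)$, and the same normalization on the $N$ side makes the difference above come out correctly. The second is checking that the hypotheses of the integral formula really hold for $[\widetilde a]$ and $[a]$, namely that $\widetilde a$ is a genuine cotangent path defined on all of $I$ and that $[\widetilde a]=I(m,[a])$; both are supplied by the construction of the $\Sigma(N)$-action and the preceding proposition, and it is precisely the completeness of $\phi$ that guarantees the lift $\widetilde\gamma$ (and hence $\widetilde a$) exists over the whole interval. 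As a consistency check one may also compare with the general corollary $\frac{1}{2}\log(c_{\mu,\nu}([a]))=\int_a X_{\mu,\nu}$, which must agree with the present formula via $\int_a X_{\mu,\nu}=\int_{\widetilde a}X_\mu-\int_a X_\nu$.
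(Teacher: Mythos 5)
Your proposal is correct and follows exactly the route the paper intends: the corollary is an immediate consequence of the definition $c_{\mu,\nu}(g)=c_\mu(I(g))c_\nu(J(g))^{-1}$, the identification $I(m,[a])=[\widetilde a]$, $J(m,[a])=[a]$ from the preceding proposition, and the integration formula $\tfrac{1}{2}\log(c_\mu([b]))=\int_b X_\mu$ for the source $1$-connected Weinstein groupoids. Your bookkeeping of the factor $\tfrac{1}{2}$ and the role of completeness in producing the global lift $\widetilde a$ are both handled correctly.
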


\begin{ex}[Poisson submanifolds]
Let $N$ be a Poisson submanifold of $M$. The inclusion map $\phi:N\hookrightarrow M$ is obviously a complete Poisson map. The action groupoid  $\phi^*\Sigma(N)$ is just the restriction of $\Sigma(M)$ to $N$ which we denote by $\Sigma_N(M)$. This groupoid has an obvious representation on the normal bundle $\nu(N)$ and, hence, a representation on $\wedge^\top \nu^*N$. It follows from Theorem \ref{thm:relative:mod:class} that the characteristic class of this representation corresponds under the van Est map to the relative modular class of the Poisson submanifold $N$ (up to a factor of $1/2$). This shows, for example, that for a proper Poisson manifold $M$ (i.e., a Poisson manifold $M$ whose Weinstein groupoid $\Sigma(M)$ is proper) the relative modular class of any Poisson submanifold vanishes.
\end{ex}

\begin{ex}[Surjective submersions]
Now suppose that the complete Poisson map $\phi:M\to N$ is a surjective submersion. By linearization of the natural action of $\Sigma(N)$ on $M$, we have  a representation  of $\phi^*\Sigma(N)$ on $\Ver$ with moment map $p:\Ver\to M$, the canonical projection:
\[ (m,g)\cdot v=\d_m T_g (v),\]
where $T_g:p^{-1}(\s(g))\to p^{-1}(\t(g))$ is the action by $g\in\Sigma(N)$. Hence, we also obtain a representation of $\phi^*\Sigma(N)$ on $\wedge^\top \Ver^*$, and it follows from Proposition \ref{prop:mod:subm:char:vert*} that the characteristic class of this representation corresponds under the van Est map to the modular class of $\phi$ (up to a factor of $1/2$).
\end{ex}

\subsection{Poisson quotients}

Let  $\Phi:G\times M\to M$ be a Poisson action of a complete, connected Poisson-Lie group $(G,\pi_G)$ on an integrable Poisson manifold $(M,\pi)$. Our aim is to explain the relationship between the modular characters of the symplectic groupoids integrating $M$ and $M/G$. First, let us recall how one can obtain a symplectic groupoid integrating $M/G$ as a symplectic quotient of $\Sigma(M)$.

The induced map $j:T^*M\to \gg^*$ defined by $$<j(\al),\xi>=\xi_M(\al), \quad (\al\in\Omega^1(M),\; \xi\in\gg),$$ is a Lie bialgebroid morphism and integrates to the morphism of Lie groupoids $J:\Sigma(M)\to G^*$ given by:
$$
J([a])=[j\smalcirc a], \quad ([a]\in\Sigma(M)).
$$
It was proved in \cite{FP} that there exists a lifted hamiltonian global action $G \times \Sigma(M)\to \Sigma(M)$ with  momentum map $J:\Sigma(M)\to G^*$.
Since $J$ is a Poisson map, it is $G$-equivariant
$$
J(g\,[a])=g\,J([a]),\quad (g\in G, \,[a]\in\Sigma(M))
$$
and it follows from the groupoid morphism property of $J$ that the following  twisted multiplicative property holds:
$$
g([a].[a'])=({}^{J([a]')}g \,[a]).(g \, [a']), \quad (g\in G,\, ([a],[a'])\in\Sigma(M)^{(2)}).
$$
In the above formulas ${}^{u}g$ means the left dressing action of $u\in G^*$ on  $g\in G$, while $g u$ means the left dressing action of $g\in G$ on $u\in G^*$.


The twisted multiplicative and Poisson properties imply that $J^{-1}(e)\in\Sigma(M)$ is a coisotropic Lie subgroupoid whenever $e$ is a regular value of $J$. This is the case if the action of $G$ on $M$ is proper and free and in this case one can show that the symplectic quotient:
\[ \Sigma(M)//G:=J^{-1}(e)/G,\]
is a symplectic groupoid integrating $M/G$. Note that, in general, one has $\Sigma(M)//G\ne \Sigma(M/G)$, since the former may fail to be source 1-connected (see \cite{FP}).

Let us now choose a volume form $\mu\in\Omega^\top(M)$ of type \eqref{eq:vol:form} and let $c_\mu:\Sigma(M)\to\Rr^\times$ be the modular character of $\Sigma(M)$

\begin{thm}
The $1$-cocycle $c_\mu:J^{-1}(e)\to \Rr$ is $G$-invariant and it induces a groupoid morphism $ \bar c_\mu:J^{-1}(e)/G\to \Rr$. Moreover, if $J^{-1}(e)/G$ is source connected, we have:
$$\VE[\log \bar c_\mu]=2\mod (M/G).$$
\end{thm}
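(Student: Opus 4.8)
The plan is to reduce the statement to the integration formula
$\tfrac{1}{2}\log c_\mu([a])=\int_a X_\mu$, valid for every $A$-path $a$ on $T^*M$ as recalled above, combined with the fact that, because $\mu$ has the form \eqref{eq:vol:form}, the modular vector field $X_\mu$ is $\phi$-related to $X_\nu$ by Theorem \ref{thm:mod:class:quotient}: $\d\phi\cdot X_\mu=X_\nu\smalcirc\phi$. On the target side, $\Sigma(M)//G=J^{-1}(e)/G$ is a source connected symplectic groupoid integrating $M/G$, so by the general relation $2\mod(M/G)=\mod(T^*(M/G))=\VE(\mod(\Sigma(M)//G))$ its modular character $c_\nu$ relative to $\nu\otimes\nu$ satisfies $\VE[\log c_\nu]=2\mod(M/G)$. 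Thus it suffices to identify $\bar c_\mu$ with $c_\nu$; injectivity of $\VE$ in degree one (guaranteed by source connectedness) is then automatic for the conclusion.

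First I would identify the classes in $J^{-1}(e)$ with $A$-homotopy classes of cotangent paths valued in the kernel subalgebroid. Since $\eval{j(\al),\xi}=\xi_M(\al)$ and the action is free, one has $\Ker j=(T\F_\phi)^0=\nu^*(\F_\phi)\cong\phi^*T^*(M/G)$, and $J^{-1}(e)$ is the subgroupoid of $\Sigma(M)$ integrating $\Ker j$. Assuming $J^{-1}(e)$ source connected, any $g=[a]\in J^{-1}(e)$ is represented by a cotangent path with $a(t)\in\nu^*(\F_\phi)$, that is $a=\phi^*b$ for a cotangent path $b$ on $M/G$ covering $\phi\smalcirc\gamma$ and representing the image $\bar g$ of $g$ in $\Sigma(M)//G$. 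The key computation is then
\begin{align*}
\tfrac{1}{2}\log c_\mu(g)=\int_a X_\mu
&=\int_0^1\eval{X_\mu|_{\gamma(t)},\phi^*b(t)}\,\d t=\int_0^1\eval{\d\phi\cdot X_\mu|_{\gamma(t)},b(t)}\,\d t\\
&=\int_0^1\eval{X_\nu|_{\phi(\gamma(t))},b(t)}\,\d t=\int_b X_\nu=\tfrac{1}{2}\log c_\nu(\bar g),
\end{align*}
using $\eval{X_\mu,\phi^*b}=\eval{\d\phi\cdot X_\mu,b}$ and $\d\phi\cdot X_\mu=X_\nu\smalcirc\phi$.

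Since the right-hand side depends only on the projected path $b$, hence only on the image $\bar g\in\Sigma(M)//G$, this single computation does triple duty: it shows $\log c_\mu$ is constant on the $G$-orbits in $J^{-1}(e)$ (assertion that $c_\mu|_{J^{-1}(e)}$ is $G$-invariant), it guarantees that the induced $\bar c_\mu$ is well defined, and it identifies $\bar c_\mu$ with $c_\nu$. That $\bar c_\mu$ is a genuine groupoid morphism follows because $c_\mu$ restricts to a cocycle on the subgroupoid $J^{-1}(e)$ and $J^{-1}(e)\to\Sigma(M)//G$ is a groupoid morphism. Applying $\VE$ to $\bar c_\mu=c_\nu$ gives $\VE[\log\bar c_\mu]=2\mod(M/G)$.

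The main obstacle is the structural input: one must justify that $J^{-1}(e)$ is exactly the (source connected) subgroupoid integrating $\Ker j=\nu^*(\F_\phi)$, that every class in $J^{-1}(e)$ admits a representative valued in this subalgebroid, and that the projection $\bar j\colon\nu^*(\F_\phi)\to T^*(M/G)$ descends through the $G$-quotient to the cotangent-path description of $\Sigma(M)//G$ as an integration of $M/G$. This is precisely where source connectedness of $\Sigma(M)//G$ (equivalently of $J^{-1}(e)$) enters, and it rests on the reduction results of \cite{FP}. A secondary delicate point is that $G$ acts on $M$ by a Poisson action rather than by Poisson diffeomorphisms, so $G$-invariance of $\log c_\mu$ on $J^{-1}(e)$ is not evident a priori; the argument above deliberately sidesteps a direct infinitesimal check (which would otherwise have to absorb the correction terms of \eqref{eq:invariance:modular}) by deducing invariance from the fact that $\int_b X_\nu$ only sees the reduced path $b$.
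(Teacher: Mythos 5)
Your argument is correct, but it follows a genuinely different route from the paper's. The paper establishes the $G$-invariance of $\log c_\mu$ on $J^{-1}(e)$ by integrating the infinitesimal identity \eqref{eq:invariance:modular} (using connectedness of $G$) into a global identity on all of $\Sigma(M)$ involving the correction terms $\log\mathrm{Ev}_{\chi_0}\circ J$ and $\log\mathrm{Ev}_{\vartheta_0}$ of the dressing transformation; these corrections vanish precisely on $J^{-1}(e)$, and then Theorem \ref{thm:mod:class:quotient} is invoked for the van Est statement. You instead bypass the invariance computation entirely by representing every element of $J^{-1}(e)$ by a cotangent path valued in $\Ker j=\nu^*(\F_\phi)\cong\phi^*T^*(M/G)$ and pushing the integral $\int_a X_\mu$ down to $\int_b X_\nu$ via the $\phi$-relatedness supplied by Theorem \ref{thm:mod:class:quotient}; this simultaneously yields invariance, well-definedness of $\bar c_\mu$, and its identification with the modular character of $\Sigma(M)//G$. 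What your approach buys is a uniform, geometric mechanism that avoids the dressing-action bookkeeping; what it costs is a heavier reliance on the reduction/integration machinery of \cite{FP}, and you should make two of those inputs explicit: (i) source connectedness of $J^{-1}(e)/G$ must be converted into source connectedness of $J^{-1}(e)$ (this holds because $G$ is connected and the lifted action identifies the source fibers of $J^{-1}(e)/G$ with those of $J^{-1}(e)$), which is what licenses representing every element by a $\Ker j$-valued path; and (ii) the claim that $\int_b X_\nu$ depends only on the image $\bar g$ in $\Sigma(M)//G$ is not automatic from cotangent-homotopy invariance alone, since $\Sigma(M)//G$ need not be source $1$-connected -- it follows instead from the fact that the projection $J^{-1}(e)\to J^{-1}(e)/G$ integrates $\phi^*T^*(M/G)\to T^*(M/G)$ together with the paper's general proposition that the modular character of \emph{any} Lie groupoid integrating $T^*(M/G)$ has van Est image $2X_\nu$, so that $\int_b X_\nu=\tfrac12\log c_\nu(\bar g)$ for any path $b$ integrating to $\bar g$. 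With these two points spelled out, your proof is complete and gives a clean alternative to the computation in the paper.
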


\begin{proof}
Equation  \eqref{eq:invariance:modular} and the fact that $G$ is connected allow us to prove that
\[
\log c_{\mu}((g\,[a])[a]^{-1})+\log\mathrm{Ev}_{\chi_0}(J((g\, [a])[a]^{-1}))-\log\mathrm{Ev}_{\vartheta_0}(({}^{J([a]^{-1})}g)g^{-1})=0
\]
Here, for $\xi$ in the center of the Lie algebra $\gg$ we have denoted by $\mathrm{Ev}_\xi:G\to\Rr$ the corresponding group homomorphism.

Now, the $G$-invariance of $\log c_{\mu}:J^{-1}(e)\to\Rr$  is an immediate consequence of this last equation. Finally,
Theorem \ref{thm:mod:class:quotient} guarantees that $$\VE\left[\frac{1}{2}\log \bar c_{\mu}\right]=\mod(M/G).$$
\end{proof}






\begin{thebibliography}{10}
\bibitem{Bur} H.~Bursztyn, A brief introduction to Dirac manifolds. Preprint arXiv:1112.5037

\bibitem{CaFe1}
	A.S.~Cattaneo and G.~Felder, Coisotropic submanifolds in Poisson
	geometry and branes in the Poisson sigma model,
	\emph{Lett.~Math.~Phys.~}\textbf{69} (2004), 157--175.

\bibitem{Crainic} M.~Crainic, Differentiable and algebroid cohomology, van Est isomorphisms, and characteristic classes.
	\emph{Comment.~Math.~Helv.~}\textbf{78} (2003), no.~4, 681--721.

\bibitem{CF0} M.~Crainic and R.L.~Fernandes, Lectures on Integrability of Lie Brackets, to appear in \emph{Geometry \& Topology Monographs} \textbf{17} (2010).

\bibitem{CF1} M.~Crainic and R.L.~Fernandes, Stability of symplectic leaves.
	\emph{Invent.~Math.~} \textbf{180}, no. 3, (2010), 481--533.

\bibitem{CF2} M.~Crainic and R.L.~Fernandes, Integrability of {P}oisson brackets,
	\emph{J.~Differential Geom.~}\textbf{66} (2004), no.~1, 71--137.

\bibitem{DufourZung} J.P.~Dufour and N.-T.~Zung, \emph{Poisson structures and their normal forms}.
	Progress in Mathematics, vol.~242, Birkh\"auser, Basel, 2005.
	
\bibitem{ELW} S.~Evens, J.-H.~Lu and A.~Weinstein, Transverse
  measures, the modular class and a cohomology pairing for Lie algebroids.
  \emph{Quart.~J.~Math.~Oxford (2)} \textbf{50} (1999), 417--436.

\bibitem{Fernandes0} R.L.~Fernandes, The symplectization functor, in Proceedings of XV International Workshop on Geometry and Physics
	(Puerto de la Cruz, Tenerife, 2006), \emph{Publ.~R.~Soc.~Mat.~Esp.~}\textbf{11} (2008) 67-82.
	
\bibitem{Fernandes1} R.L.~Fernandes, Lie algebroids, holonomy and characteristic classes.
  \emph{Adv.~Math.~}\textbf{170} (2002), no.~1, 119--179.

\bibitem{Fernandes2} R.L.~Fernandes, Connections in Poisson Geometry I: Holonomy and Invariants.
	\emph{J. Diff. Geom.} \textbf{54}, (2000) 303--366.
	
\bibitem{FOR} R.L.~Fernandes, J.-P.~Ortega and T.~Ratiu, The momentum map in Poisson geometry,
	\emph{Amer.~J.~of Math.~} \textbf{131}, no. 5  (2009) 1261-1310.

\bibitem{FP} R.L.~Fernandes and D.I.~Ponte, Integrability of Poisson-Lie group actions,
    \emph{Lett. Math. Phys.} \textbf{90}  (2009) 137--159.
	
\bibitem{G1} V.L.~Ginzburg, Equivariant Poisson cohomology and a spectral sequence associated with a moment map.
	\emph{Internat.~J.~Math.~}\textbf{10} (1999),  no.~8, 977--1010.

\bibitem{G2} V.L.~Ginzburg, Momentum mappings and Poisson cohomology.
	\emph{Internat.~J.~Math.~}\textbf{7} (1996),  no.~3, 329--358.

\bibitem{GG} V.L.~Ginzburg and A.~Golubev, Holonomy on Poisson manifolds and the modular class.
  \emph{Israel J.~Math.~}\textbf{122} (2001), 221--242.

\bibitem{GL} V.L.~Ginzburg and J.-H.~Lu, Poisson cohomology of Morita-equivalent Poisson manifolds.
  \emph{Internat.~Math.~Res.~Notices} (1992), no. 10, 199--205.

\bibitem{GMM} J.~Grabowski, G.~Marmo and P.~Michor, Homology and modular classes of Lie algebroids.
  \emph{Ann.~Inst.~Fourier} \textbf{56} (2006), 69--83.

\bibitem{KW} Y.~Kosmann-Schwarzbach and A.~Weinstein, Relative modular classes of Lie algebroids.
  \emph{C.~R.~Math.~Acad.~Sci.~Paris} \textbf{341} (2005), no. 8, 509--514.

\bibitem{KLW} Y.~Kosmann-Schwarzbach, C. Laurent-Gengoux and A.~Weinstein, Modular classes of Lie algebroid morphisms. \emph{Transform.~Groups}  \textbf{13}  (2008),  no. 3-4, 727--755.

\bibitem{Koszul} J.-L.~Koszul, Crochet de Schouten-Nijenhuis et cohomologie.
	\emph{Ast\'erisque Num\'ero Hors S\'erie} (1985), 257--271.
	
\bibitem{Lichnerowicz} A.~Lichnerowicz, Les vari\'{e}t\'{e}s de Poisson et leurs alg\`{e}bres de Lie associ\'{e}es.
  \emph{J.~Diff.~Geom.~}\textbf{12} (1977), 253--300.

\bibitem{Lu} J.-H.~Lu, Multiplicative and Affine Poisson structures on Lie Groups,
    \emph{Ph.D. Thesis}, University of California (Berkeley), 1990.

\bibitem{Lu1} J.-H.~Lu, Momentum mappings and reduction of Poisson actions,
        in \emph{Symplectic geometry, groupoids, and integrable systems} (Berkeley, CA, 1989), 209–-226,
        Math.~Sci.~Res.~Inst.~Publ., 20, Springer, New York (1991).

\bibitem{MM} I.~Moerdijk and J.~Mr\v{c}un, On the integrability of Lie subalgebroids, \emph{Adv.~Math.~}\textbf{204} (1) (2006), 101--115.

\bibitem{Radko} O.~Radko, A classification of topologically stable Poisson structures on a compact oriented surface.
	\emph{J.~Symplectic Geom.~}\textbf{1} (2002), no. 3, 523--542.
	
\bibitem{Vaisman} I.~Vaisman, \emph{Lectures on the geometry of Poisson manifolds}.
	Progress in mathematics, vol.~118, Birkh\"auser, Basel, 1994.

\bibitem{Weinstein1} A.~Weinstein, The modular automorphism group of a Poisson manifold.
  \emph{J.~Geom.~Phys.~}\textbf{23} (1997), no.~3-4, 379--394.

\bibitem{Weinstein2} A.~Weinstein, Coisotropic calculus and Poisson groupoids.
	\emph{J.~Math. Soc.~Japan} \textbf{40} (1988), 705--727.
	
\bibitem{Zung} N.-T.~Zung, Proper groupoids and momentum maps: linearization, affinity, and convexity.
	\emph{Ann.~Sci.~\'Ecole Norm.~Sup.~(4)} \textbf{39} (2006), no.~5, 841--869.

\bibitem{Xu} P. Xu, Dirac submanifolds and Poisson involutions,
	\emph{Ann.~Sci.~\'Ecole Norm.~Sup.~}(4) \textbf{36} (2003), 403--430.

\end{thebibliography}
\end{document}